\documentclass[12pt,reqno]{amsart}
\usepackage{amsmath,amssymb,amsthm,mathrsfs,mathtools}
\usepackage[left=1.4in,right=1.4in,top=1.2in,bottom=1.2in]{geometry}
\usepackage{enumitem}
\setlist[enumerate]{itemsep=0.5ex}
\usepackage{cite}
\usepackage{tikz}
\usetikzlibrary{arrows.meta,positioning,calc}
\usepackage{array,booktabs,tabularx}
\usepackage{float}
\usepackage{microtype}
\usepackage[colorlinks=true,linkcolor=blue,citecolor=blue,urlcolor=blue]{hyperref}
\usepackage{bookmark}
\usepackage[nameinlink,noabbrev,capitalise]{cleveref}
\hypersetup{
 pdftitle={Projections in reduced group C*-algebras with prescribed trace values in (0,1)},
 pdfauthor={Guoliang Yu},
 pdfsubject={Prescribed traces and failure of reduced Baum--Connes surjectivity},
 pdfkeywords={Baum--Connes conjecture, reduced group C*-algebra, canonical trace, expanders, graphical small cancellation}
}
\theoremstyle{plain}
\newtheorem{theorem}{Theorem}[section]
\newtheorem{proposition}[theorem]{Proposition}
\newtheorem{lemma}[theorem]{Lemma}
\newtheorem{corollary}[theorem]{Corollary}
\theoremstyle{definition}
\newtheorem{definition}[theorem]{Definition}

\theoremstyle{remark}
\newtheorem{remark}[theorem]{Remark}

\numberwithin{equation}{section}
\newcommand{\Z}{\mathbb Z}
\newcommand{\Q}{\mathbb Q}
\newcommand{\R}{\mathbb R}
\newcommand{\C}{\mathbb C}

\newcommand{\Ftwo}{\mathbb F_2}
\newcommand{\Cr}{C_r^*}
\newcommand{\Xhat}{\widehat X}
\newcommand{\one}{\mathbf 1}
\newcommand{\eps}{\varepsilon}
\DeclareMathOperator{\supp}{supp}
\DeclareMathOperator{\diam}{diam}
\DeclareMathOperator{\girth}{girth}
\DeclareMathOperator{\dist}{dist}
\DeclareMathOperator{\spec}{spec}
\DeclareMathOperator{\Tr}{Tr}
\DeclareMathOperator{\Adj}{Adj}
\DeclareMathOperator{\Cay}{Cay}

\DeclareMathOperator{\im}{im}
\DeclareMathOperator{\Vertices}{Vert}
\DeclareMathOperator{\Edges}{Edges}
\DeclareMathOperator{\PSL}{PSL}

\newcommand{\abs}[1]{\lvert#1\rvert}
\newcommand{\norm}[1]{\lVert#1\rVert}

\title[Projections with prescribed trace values]
{Projections in reduced group $C^*$-algebras with prescribed trace values in $(0,1)$}
\author[G. Yu]{Guoliang Yu}
\address{Department of Mathematics, Texas A\&M University,
College Station, TX 77843, USA}
\email{guoliangyu@tamu.edu}
\date{}

\begin{document}
\begin{abstract}
Building on OpenAI's preprint, we construct, for every real number
$r\in(0,1)$, a finitely generated discrete group $G_r$ and a projection
$e_r\in C_r^*(G_r)$ whose canonical trace is $r$.  The construction
encodes the geometry of expanders in the dynamics of a semidirect product
of groups.  For irrational $r$, the $K$-theory class $[e_r]$ is not in
the image of the reduced Baum--Connes map.  The groups $G_r$ have
torsion and satisfy the maximal strong Novikov conjecture.
\end{abstract}
\maketitle

\begingroup
\small
\tableofcontents
\endgroup

\section{Introduction}
\label{sec:introduction}

The Baum--Connes conjecture provides an algorithm for computing the $K$-theory
of reduced group $C^*$-algebras \cite{BC,BCH}; see \cite{Valette} for an
introduction to the conjecture.  The canonical trace on a reduced group
$C^*$-algebra gives a numerical obstruction to the surjectivity of the
reduced Baum--Connes map.  L\"uck's trace theorem says that every class in the image of the reduced
Baum--Connes map has trace in the subring $\Lambda^G\subseteq\Q$ generated by the inverses
of the orders of the finite subgroups of $G$ \cite[Theorem~0.3]{Luck}; in
particular, this trace is rational.  Thus a projection of irrational
trace in a reduced group $C^*$-algebra produces a K-theory element that is not in the
image of the reduced Baum--Connes map.  The OpenAI
preprint~\cite{OpenAI2026} constructs a projection with irrational
canonical trace in the reduced group $C^*$-algebra of a group; for this
group, the reduced Baum--Connes map is therefore not surjective.
Building on OpenAI's work, we construct projections in reduced group
$C^*$-algebras with any prescribed trace in $(0,1)$.  Our group is
finitely generated and depends on the prescribed trace.

Let $G$ be a countable discrete group.  Its reduced group $C^*$-algebra
$\Cr(G)$ is the operator-norm closure of the complex group ring acting
by left translation on $\ell^2(G)$.  Its canonical trace is
\[
 \tau_G(a)=\langle a\delta_1,\delta_1\rangle,
 \qquad
 \tau_G\left(\sum_g a_g g\right)=a_1.
\]
In particular, $\tau_G(1)=1$, and a projection $e=e^*=e^2$ has trace in
$[0,1]$; the values $0$ and $1$ are attained by the projections $0$ and $1$.
We prove that every value in $(0,1)$ occurs.

\begin{theorem}\label{thm:main}
For every real number $r\in(0,1)$ there exist a finitely generated discrete
group $G_r$ and a projection $e_r\in\Cr(G_r)$ such that
\[
 \tau_{G_r}(e_r)=r.
\]
\end{theorem}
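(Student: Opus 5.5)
We will realise $G_r$ as a semidirect product $G_r=A\rtimes H$, where $H$ is a finitely generated group with property (T) or residually finite with expander quotients, and $A=\bigoplus_{\text{countable}}\Z/2$ (or a finitely generated relative of it) carries an $H$-action encoding a sequence of finite quotients $H\to Q_n$. The guiding identity is $\Cr(A\rtimes H)\cong C(\widehat A)\rtimes_r H$, where $\widehat A$ is a compact totally disconnected space, here a profinite-type space. Under this identification the canonical trace restricts on $C(\widehat A)$ to integration against Haar measure $\mu$, which is $H$-invariant.

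The projection comes from a Kazhdan-type element. Fix a finite symmetric generating set $S$ of $H$ and let $\Delta=\frac{1}{|S|}\sum_{s\in S}s$. Expanders give a uniform spectral gap: on each $\ell^2(Q_n)$, $\Delta$ has eigenvalue $1$ on constants and spectrum in $[-1,1-\eps]$ on the orthogonal complement. Choose a clopen partition of $\widehat A$ indexed by cosets, so that over each piece $\widehat A\supseteq X_n$ the action factors through $Q_n$. Then set
\[
 p_n=\chi_{X_n}\,f(\Delta),
\]
where $f$ is continuous, equals $1$ at $1$, and vanishes on $[-1,1-\eps]$. On $X_n$ the element $f(\Delta)$ acts as the projection onto $Q_n$-invariant functions, i.e.\ the averaging projection. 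Hence $p_n$ is a projection in $C(\widehat A)\rtimes_r H$ with $\tau(p_n)=\mu(X_n)/|Q_n|$, the measure of a fundamental domain piece. The spectral gap is what makes $f(\Delta)$ an honest element of the reduced algebra. Because the gap is uniform across the pieces, the same $f$ works everywhere, and the sum $e=\sum_n p_n$, taken over disjoint pieces, converges in norm to a projection: it equals $\chi_X f(\Delta)$ with $X=\bigsqcup X_n$ clopen-or-not.

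To hit $r$ exactly, write $r=\sum_n c_n/|Q_n|$ with $c_n\in\{0,1,\dots\}$, e.g.\ via a base-$|Q_n|$-type expansion. Then design $A$ and the measures so that the pieces have measures $\mu(X_n)=c_n$ times the appropriate normalisation. One concrete way is to take $\widehat A=\prod_n \{0,1\}^{Q_n}$-style spaces with Bernoulli/Haar measure and carve out the pieces. Finite generation is arranged by a standard wreath-type or lamplighter trick: $A\rtimes H$ is generated by $S$ together with one generator of $A$, provided the $H$-orbit of that generator spans $A$.

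The main obstacle is the following. We need $\chi_{X}$ to lie in $C(\widehat A)$ while $X$ is an infinite disjoint union of pieces whose total measure is irrational, and at the same time have $f(\Delta)$ act as the exact averaging projection on $X$. With $X$ not clopen, the element $\chi_X$ is not continuous. My first attempt will be to make $\widehat A$ itself the disjoint union of a sequence of clopen sets converging to a single fixed point. That makes $C(\widehat A)$ contain the needed $\chi$ functions as norm limits, because $f(\Delta)$ vanishes near the fixed point's complement of invariants. I will then check norm convergence of $\sum_n\chi_{X_n}f(\Delta)$ using that the tail pieces contribute elements of norm at most $1$ but orthogonal, together with a compactness argument at the limit point.
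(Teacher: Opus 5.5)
Your construction fails at its central step, the claim that on $X_n$ the element $f(\Delta)$ acts as the averaging projection of $Q_n$.

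\textbf{Why $f(\Delta)$ cannot see $Q_n$.} Because $\Delta\in\C[H]$, the element $f(\Delta)$ lies in the copy of $\Cr(H)$ inside $C(\widehat A)\rtimes_r H$. Reduced norms and the canonical trace are computed in the regular fiber representations on $\ell^2(H)$ (compare \Cref{mod:representations}). In every one of them, $\Delta$ acts by the regular representation, independently of the point of $\widehat A$ and of the action of $H$ on $\widehat A$.
\begin{enumerate}
\item The representation on $\ell^2(Q_n)$ in which $\Delta$ has eigenvalue $1$ contains the trivial representation of $H$. That representation is not weakly contained in $\lambda_H$, since $H$ is nonamenable.
\item Concretely, $\lambda_H(\Delta)$ has no eigenvalue $1$ because $H$ is infinite. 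When the kernels of $H\to Q_n$ form a residual chain, $\lambda_H$ is weakly contained in $\bigoplus_n\ell^2_0(Q_n)$. Hence $\spec\lambda_H(\Delta)\subseteq[-1,1-\eps]$ and $f(\Delta)=0$: Kazhdan-type projections live in $C^*_{\max}(H)$, not in the reduced algebra.
\item In any case $\tau(\chi_{X_n}f(\Delta))=\mu(X_n)\,\tau_H(f(\Delta))$ does not depend on $Q_n$ at all.
\end{enumerate}
The value $\mu(X_n)/\abs{Q_n}$ you want is already the trace of $\chi_{D_n}$ for a clopen fundamental domain $D_n\subseteq X_n$, with no gap needed. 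But clopen subsets of the profinite group $\widehat A$ have rational Haar measure, so everything hinges on combining infinitely many pieces.

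\textbf{Why the infinite sum cannot be repaired.} Nonzero mutually orthogonal projections never have a norm-convergent sum, since consecutive partial sums differ by an element of norm one. So $\sum_n p_n$ exists only strongly, in the group von Neumann algebra. Your one-point-compactification device cannot change this, just as $\chi_X$ for non-clopen $X$ is not in $C(\widehat A)$.

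\textbf{The missing idea.} An irrational trace must come from a \emph{single} element of the reduced group $C^*$-algebra. Its gap must be uniform across all scales, and its kernel in each fiber must be spread over pieces of every size. The infinite series then appears only in the Haar integral of the diagonal coefficient.

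In the paper, the operator $T=d\one-\sum_s f_su_sf_{s^{-1}}$ of \eqref{op:operator} has coefficients $f_s\in C(\Omega)$ that do not commute with the $u_s$. As a result, $\pi_\omega(T)$ is the Laplacian of a subgraph $\mathcal G_\omega$ of the Cayley graph of a graphical small-cancellation group containing every $\Theta_k$, $k\in A$, isometrically. Its kernel is spanned by the constants on the complete copies, which are finite components of $\mathcal G_\omega$. The uniform gap combines the expander gap on those components with the degree-deficit estimate of \Cref{op:globaldeficit} on the remaining vertices. That estimate needs both the separation property of the subspaces in \Cref{code:exist} and the overlap geometry of \Cref{sec:geometry}. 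Then $\chi(T)$ is one projection, and \Cref{trace:exact} gives its trace $\sum_{k\in A}2^{-k}$.

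\textbf{Finite generation.} Separately, $\bigoplus_n\Ftwo[Q_n]$ is not a finitely generated $\Ftwo[H]$-module, since every finitely generated submodule lies in finitely many summands. So the lamplighter trick does not make $A\rtimes H$ finitely generated. The paper obtains finite generation because $V$ is a quotient of $\Ftwo[\Gamma]^m$.
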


The cases $r=0,1$ are supplied by the zero and identity projections.
Finite direct sums of projections, together with differences of their
$K_0$-classes, then realize every real number as the trace of a
$K_0$-class.

Write $\underline EG$ for the classifying space for proper $G$-actions.
The reduced Baum--Connes map \cite{BCH} is
\[
 \mu_0^G:K_0^G(\underline EG)\longrightarrow K_0(\Cr(G)),
\]
where the left-hand side is the equivariant $K$-homology of
$\underline EG$ with $G$-compact supports, that is, the direct limit of
$K_0^G(Y)$ over the $G$-invariant subspaces $Y\subseteq\underline EG$ with
compact quotient $Y/G$.

Our main application to the Baum--Connes conjecture is the following.

\begin{corollary}\label{cor:nonsurjective}
For every irrational $r\in(0,1)$, the class $[e_r]$ in
\Cref{thm:main} is outside the image of $\mu_0^{G_r}$, and its rationalized
class is outside the image of $\mu_0^{G_r}\otimes\Q$.
\end{corollary}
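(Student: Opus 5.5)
The corollary follows from \Cref{thm:main} combined with L\"uck's trace theorem \cite[Theorem~0.3]{Luck}. Fix an irrational $r\in(0,1)$ and take $G_r$ and $e_r$ as in \Cref{thm:main}. The canonical trace induces a group homomorphism
\[
 (\tau_{G_r})_*:K_0(\Cr(G_r))\longrightarrow\R,\qquad [p]-[q]\mapsto \tau_{G_r}(p)-\tau_{G_r}(q).
\]
Here $\tau_{G_r}$ is extended to matrix algebras $M_n(\Cr(G_r))$ by $\tau\otimes\Tr$. This map is well defined because traces of unitarily equivalent (or homotopic) projections agree. By construction, $(\tau_{G_r})_*([e_r])=r$.

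First I show that $[e_r]$ is not in the image of $\mu_0^{G_r}$. L\"uck's theorem says that for every $x\in K_0^{G_r}(\underline EG_r)$,
\[
 (\tau_{G_r})_*\bigl(\mu_0^{G_r}(x)\bigr)\in\Lambda^{G_r}\subseteq\Q,
\]
where $\Lambda^{G_r}$ is the subring of $\Q$ generated by the inverses of the orders of the finite subgroups of $G_r$. The theorem is stated for $K$-homology with $G$-compact supports, which is exactly the direct-limit definition of the left-hand side used here. Compatibility with the direct limit is automatic, since each element comes from some $K_0^{G_r}(Y)$ with $Y/G_r$ compact. So if $[e_r]=\mu_0^{G_r}(x)$, then $r\in\Q$, a contradiction.

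For the rational statement, I extend the trace $\Q$-linearly to
\[
 (\tau_{G_r})_*\otimes\Q:K_0(\Cr(G_r))\otimes\Q\to\R\otimes\Q=\R.
\]
Then L\"uck's theorem shows that the image of $\mu_0^{G_r}\otimes\Q$ is sent into $\Lambda^{G_r}\otimes\Q=\Q$: an element $\sum_i x_i\otimes q_i$ maps to $\sum_i q_i\,(\tau_{G_r})_*(\mu_0^{G_r}(x_i))$, which is rational. The rationalized class $[e_r]\otimes1$ has trace $r\notin\Q$, so it is not in this image. This also excludes rational multiples: if $n[e_r]+t$ with $t$ torsion lay in the image, applying the trace would give $nr\in\Q$, which is impossible for $n\neq 0$.

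The only point needing care is the precise hypotheses of L\"uck's theorem. It is formulated for arbitrary countable discrete groups and the assembly map from proper equivariant $K$-homology, with no extra assumptions on $G_r$, so it applies directly. I do not expect any real obstacle: all the substance lies in \Cref{thm:main}.
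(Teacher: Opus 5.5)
Your proof is correct and follows the paper's argument: the trace homomorphism on $K_0$, L\"uck's theorem forcing rational traces on the integral image (and hence on finite rational combinations in the rationalized image), compared against the irrational trace $r$ of $[e_r]$. The paper also records one nuance you used implicitly: L\"uck's Theorem~0.3 is stated after extending scalars to $\Lambda^G$, so the integral inclusion is obtained by applying it to $1\otimes y$ for $y$ in the image of the integral map.
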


The proof is given in \Cref{sec:prescribed}.

In contrast, the maximal Baum--Connes assembly map is integrally
injective for all $G_r$ (\Cref{max:main}); that is, each $G_r$ satisfies
the maximal strong Novikov conjecture.

We also consider the invariant
\[
 T(G)=\{\tau_G(e):e=e^*=e^2\in\Cr(G)\},
\]
where $\tau_G$ is the canonical trace on $\Cr(G)$.  It is invariant under group
isomorphisms and, when the reduced group $C^*$-algebra has a unique
tracial state, under isomorphisms of the reduced group $C^*$-algebras.
For the groups
\[
 \widetilde G_r=G_r*\Z,
\]
the algebras $\Cr(\widetilde G_r)$ are simple and have a unique tracial
state, and the inclusion $G_r\subseteq\widetilde G_r$ preserves the
projection $e_r$ and its canonical trace.  Countability of $T(G)$ for
countable groups then shows that our construction yields continuum many
examples of pairwise nonisomorphic simple reduced group $C^*$-algebras
with unique tracial states; see \Cref{prescribed:remarks}.
 We remark that continuum examples of nonisomorphic reduced
group $C^*$-algebras were previously discovered in~\cite{BCI,CDD}.

The reduced Baum--Connes map is known to be an isomorphism for large
classes of groups.  The first results of this kind go back to Pimsner and
Voiculescu, whose computation of the $K$-theory of the reduced
$C^*$-algebras of free groups \cite{PV} implies the conjecture for free
groups and shows that $\Cr(F_n)$ contains no projections other than $0$
and $1$.  Pimsner extended these methods to crossed products by groups
acting on trees \cite{Pim}.  Higson and Kasparov proved the conjecture for all
a-T-menable groups, that is, groups admitting a proper isometric action
on a Hilbert space \cite{HK}.  Lafforgue proved it for many groups with
property (T), including cocompact lattices in $\mathrm{SL}_3(\R)$,
$\mathrm{SL}_3(\mathbb C)$, and $\mathrm{SL}_3(\mathbb Q_p)$ \cite{Laf02}.  For
hyperbolic groups it was proved by
Lafforgue \cite{Laf02,Laf12} and by Mineyev and Yu \cite{MinYu}.

Expanders have long played a role in failures of Baum--Connes-type
surjectivity. Higson--Lafforgue--Skandalis developed expander
obstructions for groupoids, coarse spaces, and the Baum--Connes map
with coefficients~\cite{HLS}. Willett and Yu developed higher index
theory for sequences of expanders whose girths tend to infinity
and for Gromov monster groups~\cite{WY1,WY2}. For the associated
coarse disjoint unions, the coarse Baum--Connes map is injective
but not surjective, whereas its maximal version is an isomorphism.
For the Gromov monster groups considered there, the Baum--Connes
map with certain coefficients is also injective but not surjective.
For an account of these results, see~\cite{WYbook}.
Li, Nowak, and Pooya used L\"uck's trace obstruction to propose a
strategy for counterexamples: a group of type $F_{n+1}$ whose $n$-th
$\ell^2$-Betti number is not in $\Lambda^G$, and for which zero is an
isolated point of the spectrum of the $n$-th Laplacian, has a reduced
Baum--Connes map that is not surjective~\cite{LNP}.
Gromov's constructions~\cite{Gromov} introduced the use of expanders
in finitely generated groups. Here we use graphical small-cancellation
labellings, following Osajda~\cite{Osajda}, to obtain isometric
embeddings of expanders into Cayley graphs.

The starting point of our work is the construction
in~\cite{OpenAI2026}, which encodes the geometry of expanders in
the dynamics of a semidirect product. A uniform spectral gap  ensures that the kernel
projection belongs to the reduced group $C^*$-algebra.
Our labelling and linear-algebra refinements allow the trace of
this projection to be prescribed throughout a small interval
using binary expansions. A direct product with a finite cyclic
group then gives any prescribed trace in $(0,1)$.

\subsection{Organization}
\label{sec:outline}
This article is organized as follows.
In \Cref{sec:strategy}, we explain the strategy of the construction in
seven steps.
In \Cref{sec:labeling}, we prove the labelling theorem.
In \Cref{sec:geometry}, we prove the geometric estimates for
intersections between translated relator graphs.
In \Cref{sec:data}, we choose the expanders, the finite vector space
$E$, and the subspaces $\mathcal C_k$.
In \Cref{mod:section}, we construct the compact group $\Omega$,
the subgraphs $\mathcal G_\omega$, and the semidirect product $H$;
we also prove surjectivity of the restriction maps and the resulting
uniform distributions.

In \Cref{op:section}, we prove the uniform spectral gap, and in
\Cref{trace:section}, we construct the projection and compute its trace.
In \Cref{sec:prescribed}, a direct product with a finite cyclic group
completes the construction.  We then prove the consequences for
reduced assembly and obtain continuum many pairwise nonisomorphic simple
reduced group $C^*$-algebras with unique tracial states.
Finally, in \Cref{sec:maximal}, we express $G_r$ and
$\widetilde G_r$ as direct limits of groups of finite asymptotic
dimension and prove integral injectivity of their maximal assembly
maps for proper actions.

\section{Strategy of construction}
\label{sec:strategy}

In this section, we give a detailed road map of the construction and of the proofs.  The
precise definitions, statements, and proofs are given in
\Cref{sec:labeling} to \Cref{sec:prescribed}.

\smallskip
\noindent\emph{The goal and the main idea.}
We first construct projections with sufficiently small prescribed
traces.  The group $H$ is a semidirect product $V\rtimes\Gamma$, where
$\Gamma$ is finitely generated and its Cayley graph contains a sequence
of expanders.  We define a compact abelian group $\Omega$ of functions
on $\Gamma$ and let $V$ be its discrete Pontryagin dual.  The action of
$\Gamma$ on $\Omega$ gives the identification
\[
 \Cr(H)\cong C(\Omega)\rtimes_r\Gamma.
\]
The geometry of the expanders is encoded in this action.  A final tensor
product with the algebra of a finite cyclic group gives every trace in
$(0,1)$, while the resulting group remains a semidirect product with a
locally finite abelian normal subgroup.

An \emph{expander family} is a sequence $(\Theta_k)$ of finite connected
$d$-regular graphs, with $d$ independent of $k$, such that their numbers
of vertices $n_k=|\Vertices(\Theta_k)|$ tend to infinity and the smallest
positive eigenvalues of their \emph{Laplacians}
\[
 \Delta_k=dI-\Adj(\Theta_k)
\]
have a common lower bound $\rho>0$.  Here $(\Adj(\Theta_k)\xi)(v)$ is the
sum of the values of $\xi$ at the neighbors of $v$.  Connectedness implies
that the kernel of $\Delta_k$ consists exactly of the constant functions.
The spectral-gap condition is equivalently
\[
 \langle\Delta_k\xi,\xi\rangle\ge\rho\norm\xi_2^2
 \qquad\text{whenever}\qquad
 \sum_{v\in\Vertices(\Theta_k)}\xi(v)=0,
\]
with the same $\rho$ for every $k$.

Together with the estimate on the remaining vertices in Step~5, this
expander gap gives a self-adjoint element $T\in\C[H]$ such that
\[
 \spec_{\Cr(H)}(T)\subseteq\{0\}\cup[\eps,2d]
 \qquad(\eps>0).
\]
If $\chi$ is a continuous function equal to $1$ at $0$ and zero on
$[\eps,2d]$, then $b=\chi(T)$ is a projection in $\Cr(H)$.  In the
representations associated to points of $\Omega$, it is the projection
onto constant functions on certain complete expander copies.

The trace is the Haar average of the diagonal coefficient of this
projection at the identity.  The projection onto constants on a
connected graph with $n$ vertices has diagonal coefficients $1/n$.
There are $n$ translated copies of that graph through the identity.  If
each is complete with probability $q$, and distinct complete copies are
disjoint, their total contribution to the trace is
$n\cdot(1/n)\cdot q=q$.  We arrange these probabilities to be $2^{-k}$.
Selecting a set of indices therefore gives a binary subseries, and these
subseries fill an interval of small trace values.

The finite vector space in which the vertex values lie is
$E=\Ftwo^m$, for a fixed $m$.  For a set $D$, we write $E^D$ for the
vector space over $\Ftwo$ of all functions $D\to E$; the \emph{support}
of $u\in E^D$ is the set of points of $D$ at which $u$ is nonzero.  We
use a $k$-dimensional subspace
$\mathcal C_k\subseteq E^{\Vertices(\Theta_k)}$, with dimension taken
over $\Ftwo$.  It has $2^k$ elements, although each vertex can take
$2^m$ values.  The integer $m$ provides enough different vertex values
to distinguish all sets of outgoing edge labels.  The integer $k$
determines the probability $2^{-k}$.

The sizes of the graphs must grow.  The ambient space
$E^{\Vertices(\Theta_k)}$ has dimension $mn_k$ over $\Ftwo$, so
$k\le mn_k$ is necessary.  In fact, the construction chooses
$2k\le n_k\le8k/3$ (\cref{data:family}).  An irrational trace requires
infinitely many nonzero terms $2^{-k}$, hence arbitrarily large $k$ and
unbounded graph sizes.  Although $n_k$ cancels in
$n_k\cdot(1/n_k)\cdot2^{-k}$, the dimension requirement still forces
these sizes to grow.

The spectral gap must remain uniform as the sizes grow.  Each finite
connected graph has a positive smallest nonzero Laplacian eigenvalue,
but these eigenvalues can tend to zero.  For the cycle with $n$ vertices,
for example, this eigenvalue is $4\sin^2(\pi/n)$.  The expander condition
prevents this loss of a common lower bound.  It controls the operator
on each complete copy, while the degree-defect estimate in Step~5
controls the vertices that do not belong to any complete copy.
The resulting gap makes the kernel projection an element of the
reduced group $C^*$-algebra by continuous functional calculus.  Without
such a gap, bounded Borel functional calculus alone would place the
kernel projection in the group von Neumann algebra.

\smallskip
\noindent\emph{Step 1: graph operators in a crossed product, and the group $H$.}
We first describe the framework in which the operator $T$ is constructed.
Let $\Gamma$ be a group generated by a finite set $S$, and let $X$ be its
Cayley graph, with vertex set $\Gamma$ and an edge between $h$ and $hs$ for
every $h\in\Gamma$ and $s\in S\cup S^{-1}$.  We assume that the symbols
in $S\cup S^{-1}$ represent pairwise distinct nonidentity elements of
$\Gamma$, so $X$ is a simple graph.  Let $\Omega$ be a compact space on
which $\Gamma$ acts by homeomorphisms, and suppose that every point
$\omega\in\Omega$ determines a subgraph $\mathcal G_\omega$ of $X$, with vertex set
$\Gamma$, in which every vertex has degree at most $d$.  We require that
$\mathcal G_{g\omega}=g\mathcal G_\omega$ for all $g\in\Gamma$, and that for each
$s\in S\cup S^{-1}$ the set of points $\omega$ for which the edge $\{1,s\}$ belongs
to $\mathcal G_\omega$ is open and closed.  Let $F_s\in C(\Omega)$ be the indicator
function of this set.  Then the edge $\{h,hs\}$ belongs to $\mathcal G_\omega$
exactly when $F_s(h^{-1}\omega)=1$, so the operator
$\Delta_\omega=dI-\Adj(\mathcal G_\omega)$ on $\ell^2(\Gamma)$ is given by
\[
 (\Delta_\omega\xi)(h)=d\,\xi(h)-\sum_{s\in S\cup S^{-1}}F_s(h^{-1}\omega)\,\xi(hs).
\]
This is the structure of an element of the reduced crossed product
$C(\Omega)\rtimes_r\Gamma$.  Writing $u_g$ for the unitary corresponding to
$g\in\Gamma$, put
\[
 T=d\cdot1-\sum_{s\in S\cup S^{-1}}F_s\,u_s\in C(\Omega)\rtimes_r\Gamma.
\]
For $\omega\in\Omega$, let $\pi_\omega$ be the representation of
$C(\Omega)\rtimes_r\Gamma$ on $\ell^2(\Gamma)$ given by
$(\pi_\omega(f)\xi)(h)=f(h^{-1}\omega)\xi(h)$ for $f\in C(\Omega)$ and
$(\pi_\omega(u_g)\xi)(h)=\xi(hg)$.  Then $\pi_\omega(T)=\Delta_\omega$ for every $\omega$, so the
single element $T$ gives all the operators $\Delta_\omega$.  Moreover, the
norm of an element of $C(\Omega)\rtimes_r\Gamma$ is the supremum of its norms
in the representations $\pi_\omega$; we prove this in the case needed here, where
$\Omega$ is a compact abelian group (\Cref{mod:representations}).  Hence a
spectral gap for the operators $\Delta_\omega$ that is uniform in $\omega$ gives a
spectral gap for $T$.

Now suppose that $\Omega$ is a compact abelian group and that $\Gamma$ acts
on it by automorphisms.  The Fourier transform identifies $C(\Omega)$ with
the reduced $C^*$-algebra of the Pontryagin dual $V$ of $\Omega$, the
discrete abelian group of continuous homomorphisms from $\Omega$ to the
circle.  The group $\Gamma$ acts on $V$, and
\[
 C(\Omega)\rtimes_r\Gamma\cong\Cr(V\rtimes\Gamma).
\]
Thus $T$ becomes an element of the reduced group $C^*$-algebra of the
discrete group $H=V\rtimes\Gamma$.  It lies in the group ring $\C[H]$ if each
$F_s$ is a finite linear combination of characters of $\Omega$.  It remains
to choose $\Gamma$, $\Omega$, and the subgraphs $\mathcal G_\omega$ so that the
operators $\Delta_\omega$ have a spectral gap independent of $\omega$, and so that
the trace of the projection onto the kernel of $T$ can be computed.  We
make these choices in Steps 2 to 4.

\smallskip
\noindent\emph{Step 2: expanders inside a group.}
Recall that the \emph{girth} of a graph is the length of its shortest
cycle.  We use $6$-regular expanders $\Theta_k$, one for each sufficiently
large integer $k$, whose girths tend to infinity (\Cref{sec:data}).  The condition that the girths tend to
infinity is an additional requirement for the small-cancellation and
overlap arguments.  To place these graphs
inside a group we use labels.
Fix a finite set $S$, and let $S^{-1}=\{s^{-1}:s\in S\}$ be a set of formal
inverses.  A \emph{labeling} of a graph $\Theta$ by $S$ chooses, for each
edge of $\Theta$, a direction and an element $s\in S$; the edge traversed in
the chosen direction has \emph{label} $s$, and traversed in the opposite
direction it has label $s^{-1}$.  A \emph{path} in $\Theta$ is a sequence of
vertices $v_0,v_1,\ldots,v_n$ in which $v_{j-1}$ and $v_j$ are joined by an
edge for each $j$; vertices may be repeated.  A path is \emph{closed} if
$v_n=v_0$, and \emph{simple} if its vertices are pairwise distinct.  Reading the labels of
the edges from $v_{j-1}$ to $v_j$ in order gives a word $s_1s_2\cdots s_n$
with $s_j\in S\cup S^{-1}$, which we call the \emph{label} of the path.  For
example, the closed path around a square whose edges, traversed in order,
have labels $a$, $b$, $a^{-1}$, $b^{-1}$ has label $aba^{-1}b^{-1}$.

Given labelings of all the graphs $\Theta_k$ by the same finite set $S$, we
define $\Gamma$ by generators and relations: $\Gamma$ is the quotient of the
free group on $S$ by the normal subgroup generated by the labels of all
closed paths in all the $\Theta_k$.  In other words, $\Gamma$ is generated
by the elements of $S$, and its only relations are those forced by the
requirement that the label of every closed path, read as a product of
generators and their inverses, equals the identity in $\Gamma$.  In the
square example, the relation $aba^{-1}b^{-1}=1$ says that $a$ and $b$
commute.  Thus the graphs together with their labelings determine the
group: different labelings of the same graphs give different relations,
and in general different groups.

These relations are exactly what is needed to map each $\Theta_k$ into
the Cayley graph $X$ of $\Gamma$, the graph with vertex set $\Gamma$ in which
$g$ and $gs$ are joined by an edge with label $s$ for every $g\in\Gamma$ and
$s\in S$.  Fix a vertex $o$ of $\Theta_k$, and send each vertex $v$ to the
product in $\Gamma$ of the labels along a path from $o$ to $v$.  This is well
defined, because two such paths differ by a closed path, whose label is
the identity in $\Gamma$, and it sends edges to edges.  In general, however,
such a map need not be injective, and it may shrink distances, because
the relations of $\Gamma$ can identify vertices of $\Theta_k$ or bring them
closer together.  The graphical small-cancellation theory of Gromov and
Osajda shows that this does not happen if the labeling satisfies two
combinatorial conditions.  First, the labeling is \emph{reduced}: the
edges at any vertex have different labels.  Then a path is determined by
its starting vertex and its label, because at each step the next edge is
the unique edge with the next label.  Second, a long simple path is
determined by its label alone, without knowing its starting vertex.  More
precisely, we fix a constant $\lambda$ with $0<\lambda\le1/24$ and require
the following: if a simple path in $\Theta_k$ has length at least $\lambda$
times the girth of $\Theta_k$, then no other simple path, in $\Theta_k$ or in
any other graph $\Theta_i$, has the same label.

The second condition does not hold automatically, even for reduced
labelings.  For example, label all edges of a cycle of length $n$ by the
same symbol $a$, oriented in the same direction around the cycle.  This
labeling is reduced, since at each vertex the two edges have labels $a$
and $a^{-1}$.  However, for each $\ell<n$, the $n$ simple paths of length
$\ell$ that go around the cycle in the positive direction, one starting at
each vertex, all have the label $a^\ell$.  The effect on the group is
drastic.  If a second cycle, of length $m$, is labeled in the same way,
then $\Gamma=\langle a\mid a^n=a^m=1\rangle$ is cyclic of order
$\gcd(n,m)$, and if $\gcd(n,m)<n$, the first cycle does not embed in the
Cayley graph of $\Gamma$.  Under the two conditions above, by contrast,
every $\Theta_k$ embeds isometrically into the Cayley graph $X$ of $\Gamma$.  In
\Cref{sec:labeling}, we construct labelings with this property, using one
finite set $S$ for all the graphs.  We call the translates
$g\Theta_k\subseteq X$, for $g\in\Gamma$, the \emph{copies} of $\Theta_k$.

This produces many expanders inside one group, but it does not yet
produce the desired operator.  Different copies overlap, so $X$ is not a
disjoint union of expanders, and the Laplacian of $X$ has no useful
kernel.  We show in \Cref{sec:geometry} that the overlaps are
nevertheless controlled.  Form the coned-off graph $\Xhat$ by adding
one vertex $c_P$ for each copy $P$ and joining it to every vertex of $P$.
Fix a root $R$, which may be a group vertex or a cone point.  For every
copy $P$ with $c_P\ne R$, all intersections $P\cap Q$ with different
copies satisfying
\[
 d_{\Xhat}(c_Q,R)\le d_{\Xhat}(c_P,R)
\]
lie in one intrinsic ball $B_P$ of $P$ of radius
$r_P\le\girth(P)/16$.  The chosen girth is greater than $16$, so
$2r_P+1<\girth(P)$; hence the induced ball is a tree.  These
\emph{overlap estimates} are used twice below.

\smallskip
\noindent\emph{Step 3: the compact group $\Omega$ and its dual.}
We now specify the vertex values.  After the finite labeling alphabet
$S$ has been chosen, choose an integer $m\ge1$ such that $E=\Ftwo^m$
contains at least as many nonzero vectors as there are $d$-element
subsets of $S^{\pm1}$.  Assign distinct nonzero vectors $a_J\in E$ to
these subsets $J$, and define
\[
 \mathcal L(a_J)=J,
 \qquad
 \mathcal L(a)=\varnothing\quad\text{for all other }a\in E.
\]
In particular, $\mathcal L(0)=\varnothing$.  For every function
$\omega\colon\Gamma\to E$, define $\mathcal G_\omega$ to have vertex set
$\Gamma$, with the edge $\{h,hs\}$ present exactly when
\[
 s\in\mathcal L(\omega(h))
 \quad\text{and}\quad
 s^{-1}\in\mathcal L(\omega(hs)).
\]
Every vertex has degree at most $d$.  With the translation action
$(g\omega)(h)=\omega(g^{-1}h)$, the graphs satisfy
$\mathcal G_{g\omega}=g\mathcal G_\omega$.  The condition for a given edge
depends on only two vertex values, so its indicator is continuous in
the product topology.

For $v\in\Vertices(\Theta_k)$, let $J_k(v)\subseteq S^{\pm1}$ be the set
of labels of the oriented edges of $\Theta_k$ starting at $v$.  Define
\[
 w_k\colon\Vertices(\Theta_k)\longrightarrow E,
 \qquad w_k(v)=a_{J_k(v)}.
\]
If $\varphi_k\colon\Theta_k\to X$ is the chosen based embedding, every
copy has the form $P=g\varphi_k(\Theta_k)$.  Its label-preserving
identification is $\iota_P=g\varphi_k\colon\Theta_k\to P$.  Different
$g$ give different copies (\cref{geo:copies}).  We identify $P$ with its
vertex set when discussing functions and put
\[
 w_P=w_k\circ\iota_P^{-1}\colon P\longrightarrow E.
\]
Equivalently, $w_P(h)=a_{J_P(h)}$, where $J_P(h)$ is the set of labels of
the $d$ oriented edges of $P$ starting at $h$.

A copy $P$ is \emph{complete for $\omega$} if $\omega|_P=w_P$.  Then all
edges of $P$ belong to $\mathcal G_\omega$, and their labels exhaust the
$d$ labels specified at each vertex.  Thus $P$ is an entire connected
component of $\mathcal G_\omega$.

We must restrict the functions $\omega$.  If all functions
$\Gamma\to E$ were allowed, we could take $\omega=w_P$ on
$P\setminus\{v\}$ for one vertex $v$, and set $\omega=0$ elsewhere.
Only the $d$ edges of $P$ incident at $v$ would disappear.  For a nonzero
vector $\xi$ and a nonnegative self-adjoint operator $A$, the
\emph{Rayleigh quotient} is
\[
 R_A(\xi)=\frac{\langle A\xi,\xi\rangle}{\norm\xi^2}.
\]
It depends on the vector and need not be an eigenvalue.  In finite
dimensions, the smallest positive eigenvalue, when it exists, is the
minimum over nonzero vectors orthogonal to the kernel.  In the example
above, take $A=dI-\Adj(\mathcal G_\omega)$ and
$\xi=\one_{P\setminus\{v\}}$, the indicator function of
$P\setminus\{v\}$.  Then
\[
 R_A(\xi)=\frac d{\abs P-1}.
\]
Each connected component of $P\setminus\{v\}$ contains a vertex adjacent
to $v$ in $P$, where the potential $d-\deg_{\mathcal G_\omega}$ is positive.
The restriction of $A$ to functions supported on $P\setminus\{v\}$
therefore has zero kernel, and its smallest eigenvalue lies in
$(0,d/(\abs P-1)]$.  These eigenvalues tend to zero as the copies grow.
We exclude this possibility by requiring an incomplete copy to differ
from the reference function at a fixed positive proportion of its
vertices.

Choose a subspace
\[
 \mathcal C_k\subseteq E^{\Vertices(\Theta_k)},
 \qquad \dim_{\Ftwo}\mathcal C_k=k,
 \qquad w_k\in\mathcal C_k,
\]
with the additional properties described in Step~4.  For each copy $P$
of $\Theta_k$, put
\[
 \mathcal C_P
 =\{u\circ\iota_P^{-1}:u\in\mathcal C_k\}\subseteq E^P.
\]
It has dimension $k$ over $\Ftwo$ and contains $w_P$.  Its orthogonal
complement is
\[
 \mathcal C_P^\perp
 =
 \left\{z\in E^P:
   \sum_{h\in P}z(h)\cdot u(h)=0
   \text{ for every }u\in\mathcal C_P
 \right\}.
\]
Here $\cdot$ denotes the standard dot product on $E=\Ftwo^m$, and the
sum is taken in $\Ftwo$.  The subspace $\mathcal C_P^\perp$ has
dimension $mn_k-k$.

Define
\[
 \Omega=\{\omega\colon\Gamma\to E:
       \omega|_P\in\mathcal C_P\text{ for every copy }P\}.
\]
Here every copy means every translate of every graph in the subfamily
used to define $\Gamma$.  The conditions are linear and involve finitely many
coordinates at a time.  They make $\Omega$ a closed subgroup of the
compact product $E^\Gamma$, invariant under translation by $\Gamma$.
Write $\mu$ for its Haar probability measure, the probability measure
invariant under addition of elements of $\Omega$.  Step~4 shows that
each restriction map $\Omega\to\mathcal C_P$ is onto.  Its pushforward
of $\mu$ is therefore uniform measure on the $2^k$ functions in
$\mathcal C_P$, and
\[
 \mu\{\omega:\omega|_P=w_P\}=2^{-k}.
\]

We now obtain the discrete group from this compact group.  Put
\[
 V=\operatorname{Hom}_{\mathrm{cont}}(\Omega,\Ftwo),
 \qquad H=V\rtimes\Gamma.
\]
A continuous homomorphism $v\colon\Omega\to\Ftwo$ is automatically
$\Ftwo$-linear, and $\chi_v(\omega)=(-1)^{v(\omega)}$ identifies it
with a continuous character of $\Omega$.  A character is a continuous
homomorphism to the unit circle; here its values are $1$ and $-1$
because every element of $\Omega$ has order dividing two.
Thus $V$ is the Pontryagin dual of $\Omega$.

For $h\in\Gamma$ and $1\le j\le m$, let
\[
 v_{h,j}(\omega)=\omega(h)_j.
\]
Every continuous linear functional on $\Omega$ depends on finitely many
coordinates.  Its induced functional on the finite-dimensional image
of the restriction map extends to the corresponding full finite
product of copies of $E$.  Hence it is a linear combination of the
functionals $v_{h,j}$.  Moreover, $v_{h,j}=h v_{1,j}$ for the dual
translation action.  It follows that $V$ is generated as an
$\Ftwo[\Gamma]$-module by $v_{1,1},\ldots,v_{1,m}$, and $H$ is generated
by these $m$ elements together with the generators of $\Gamma$.

The equations for $\Omega$ also give the presentation
\[
 V\cong E^{(\Gamma)}/W,
 \qquad
 W=\sum_P\mathcal C_P^\perp,
\]
where $E^{(\Gamma)}$ consists of finitely supported functions
$\Gamma\to E$.  To form the sum defining $W$, we extend each
$z\in\mathcal C_P^\perp$, initially defined only on $P$, to a function
$\widetilde z\colon\Gamma\to E$ by
\[
 \widetilde z(h)=
 \begin{cases}
  z(h),&h\in P,\\
  0,&h\notin P.
 \end{cases}
\]
Here $0$ is the zero vector of $E$: the values on $P$ are unchanged,
and every vertex outside $P$ is assigned zero.  Since $P$ is finite,
$\widetilde z$ is finitely supported.  We identify $\mathcal C_P^\perp$
with its image in $E^{(\Gamma)}$ under $z\mapsto\widetilde z$.
The sum ranges over every translate of every selected relator graph
and is an algebraic sum: every element of $W$ is a finite sum of these
extended functions.  We prove this presentation in \Cref{mod:functions}
and use it for the finite-stage approximations in
\Cref{sec:maximal}.  The duality definition already gives
$\Cr(H)\cong C(\Omega)\rtimes_r\Gamma$.

For clarity, the coefficient functions of the operator in Step~1 can
be written explicitly.  Define $f_s\colon\Omega\to\{0,1\}$ by
\[
 f_s(\omega)=
 \begin{cases}
  1,&s\in\mathcal L(\omega(1)),\\
  0,&s\notin\mathcal L(\omega(1)).
 \end{cases}
\]
Thus $f_s(\omega)=1$ if and only if the label $s$ of the oriented edge
$1\to s$ belongs to $\mathcal L(\omega(1))$.  The reverse edge $s\to1$
has label $s^{-1}$ and initial vertex $s$, so its condition is
$s^{-1}\in\mathcal L(\omega(s))$.  With
$\alpha_s(f)(\omega)=f(s^{-1}\omega)$ and
$(s^{-1}\omega)(1)=\omega(s)$, we have
\[
 \alpha_s(f_{s^{-1}})(\omega)=
 \begin{cases}
  1,&s^{-1}\in\mathcal L(\omega(s)),\\
  0,&s^{-1}\notin\mathcal L(\omega(s)).
 \end{cases}
\]
The indicator of the condition that $\{1,s\}$ belongs to
$\mathcal G_\omega$ is therefore $F_s=f_s\alpha_s(f_{s^{-1}})$.  Since
$u_sfu_s^*=\alpha_s(f)$,
\[
 T=d1-\sum_{s\in S^{\pm1}}f_su_sf_{s^{-1}}.
\]
Every $f_s$ depends on the finite vector $\omega(1)\in E$, so it is a
finite linear combination of characters by Fourier expansion on $E$.
Thus $T$ belongs to $\C[H]$.

\smallskip
\noindent\emph{Step 4: extension of functions and uniform distributions.}
Set $\eta=1/(100m)$.  We choose $\mathcal C_k$ so that every nonzero
function in $\mathcal C_k$ or $\mathcal C_k^\perp$ is nonzero at more
than $\eta n_k$ vertices.  These two requirements have complementary
consequences.

First, if $u,u'\in\mathcal C_k$ are distinct, then $u-u'$ is nonzero,
so the functions differ at more than $\eta n_k$ vertices.  In
particular, on an incomplete copy $P$ the functions $\omega|_P$ and
$w_P$ differ at more than $\eta|P|$ vertices.  At each such vertex $h$,
the set $\mathcal L(\omega(h))$ is empty or is a different $d$-element
set from $J_P(h)$, by the injectivity of $J\mapsto a_J$.  At least one
of the edges of $P$ incident at $h$ is therefore missing from
$\mathcal G_\omega$.

Second, values prescribed on a sufficiently small set of vertices can
always be extended.  More precisely, if $F\subseteq P$ and
$|F|<\eta|P|$, the restriction map
\[
 \mathcal C_P\longrightarrow E^F
\]
is onto.  If it were not, a nonzero linear functional on $E^F$ would
vanish on its image.  Extending the coefficients of this functional by
zero outside $F$ would give a nonzero element of $\mathcal C_P^\perp$
supported on $F$, which contradicts the choice of the orthogonal
complement.

This local extension property and the geometry imply that every
$u_0\in\mathcal C_{P_0}$ extends to an element of $\Omega$.  To see this,
first take a finite family of copies containing $P_0$, and root the
coned-off graph at $c_{P_0}$.  If there is another copy, choose one,
called $Q$, whose cone point is farthest from this root.  The overlap
lemma places all intersections of $Q$ with the other copies in a
single ball $B_Q$ with fewer than $\eta|Q|$ vertices.  Remove $Q$ and
solve the extension problem on the remaining family by induction.
The values already assigned prescribe a function on the intersections
with $Q$, and the local extension property extends that function to
an element of $\mathcal C_Q$.  This gives a consistent assignment on
the whole finite family, agreeing with $u_0$ on $P_0$.

The conditions on copies and the condition $\omega|_{P_0}=u_0$ are
closed conditions in the compact product $E^\Gamma$.  Since each finite
collection has a solution, compactness gives a simultaneous solution.
Thus $\Omega\to\mathcal C_{P_0}$ is onto.  The pushforward of Haar
measure is the uniform measure on the finite group $\mathcal C_{P_0}$,
so a copy of type $k$ is complete with probability $2^{-k}$
(\cref{mod:marginals}).

These requirements are compatible: arbitrary values on a small set
can extend, but changing one element of $\mathcal C_k$ to another
requires changing its values on many vertices.  The extension property
allows agreement on small intersections; the separation property
excludes restrictions that differ from $w_P$ at only a few vertices
and would give small eigenvalues.

The spaces exist by the elementary random-subspace argument in
\cref{code:exist}.  The space of functions on $\Theta_k$ has dimension
$mn_k$ over $\Ftwo$.  Relatively few of its functions are nonzero at
only $\eta n_k$ or fewer vertices.  Choosing a random $k$-dimensional
subspace containing $w_k$, the probability that it or its orthogonal
complement contains any such nonzero function is less than one.
The counting estimate is given in \Cref{code:exist}.
Randomness here proves existence of the fixed spaces $\mathcal C_k$;
Haar probability on $\Omega$ is used afterward to compute the trace.

\smallskip
\noindent\emph{Step 5: the spectral gap.}
The kernel of $\pi_\omega(T)$ consists of the functions that are constant on
each complete copy and vanish elsewhere.  The gap has two sources.  On a
complete copy, $\pi_\omega(T)$ is the Laplacian of an expander, so its nonzero
eigenvalues are at least $\rho$; this is where the expander property is
used.  On the vertices that do not belong to any complete copy, the
separation property of Step~4 and the overlap estimates show that every
finite set $B$ of such vertices satisfies
\[
 \abs{\partial_{\mathcal G_\omega}B}+\sum_{h\in B}\bigl(d-\deg_{\mathcal G_\omega}(h)\bigr)
 =d\abs B-2\abs{\Edges(\mathcal G_\omega[B])}\ \ge\ \frac c2\abs B,
\]
where $\mathcal G_\omega[B]$ is the subgraph of $\mathcal G_\omega$ spanned by $B$ and
$c>0$ is a constant (\cref{op:globaldeficit}).  The first term counts the
edges of $\mathcal G_\omega$ leaving $B$, and the second measures the degree
deficit relative to $d$.  This is a discrete isoperimetric inequality of
Cheeger type.  Integrating it over the level sets of a function gives a
uniform positive lower bound for $dI-\Adj(\mathcal G_\omega)$ on the complement
of the complete copies (\cref{op:gap}).  The separation property is essential: a copy
that was complete except for a few missing edges would carry an almost
constant function with very small energy, and the gap would close.  Both
bounds are uniform in $\omega$, so the spectrum of $T$ in $\Cr(H)$ is contained
in $\{0\}\cup[\eps,2d]$ for some $\eps>0$, and $b=\chi(T)$ is a projection
in $\Cr(H)$.

\smallskip
\noindent\emph{Step 6: the trace.}
For the kernel projection $b=\chi(T)$, the canonical trace is
\[
 \tau_H(b)=\int_\Omega
 \langle\pi_\omega(b)\delta_1,\delta_1\rangle\,d\mu(\omega)
\]
(\cref{mod:representations}).  If the identity lies in a complete copy
$P$ of $\Theta_k$, the corresponding diagonal coefficient is $1/n_k$,
because $\pi_\omega(b)$ is the projection onto constants on $P$.
Otherwise that coefficient is zero.  Exactly $n_k$ copies of
$\Theta_k$ contain the identity, one for each vertex of $\Theta_k$.
Each is complete with probability $2^{-k}$.  Distinct complete copies
are disjoint, so at most one complete copy contains the identity.
Consequently,
\[
 \tau_H(b)=\sum_k n_k\cdot\frac1{n_k}\cdot2^{-k}
           =\sum_k2^{-k}
\]
(\cref{trace:exact}).  Each selected expander $\Theta_k$ contributes
$2^{-k}$ to the trace.

\smallskip
\noindent\emph{Step 7: the prescribed trace.}
For $A\subseteq\{K,K+1,\ldots\}$, let $F(S)$ be the free group on
the fixed labeling alphabet $S$, and define
\[
 \begin{split}
 \mathcal R_A
   &=\{\operatorname{lab}(\gamma): k\in A,\;
       \gamma\text{ is a closed path in }\Theta_k\},\\
 \Gamma_A
   &=F(S)/\langle\!\langle\mathcal R_A\rangle\!\rangle_{F(S)}.
 \end{split}
\]
Here $\operatorname{lab}(\gamma)$ is the word obtained by reading the
oriented edge labels along $\gamma$, and
$\langle\!\langle\mathcal R_A\rangle\!\rangle_{F(S)}$ denotes the
smallest normal subgroup of $F(S)$ containing $\mathcal R_A$.
For each $k\in A$, let
$\varphi_{A,k}\colon\Vertices(\Theta_k)\to\Gamma_A$ be the vertex map
of the based embedding into $\Cay(\Gamma_A,S)$.  Put
$\iota_{g,k}(v)=g\varphi_{A,k}(v)$ for $g\in\Gamma_A$.
Recall from Step~3 that $E=\Ftwo^m$, where $\Ftwo$ is the field with
two elements and $m$ was chosen so that distinct nonzero vectors
$a_J\in E$ can be assigned to all $d$-element subsets
$J\subseteq S^{\pm1}$.  Define
\[
 \Omega_A=
 \left\{\omega\colon\Gamma_A\to E:\;
 \begin{array}{l}
  \omega\circ\iota_{g,k}\in\mathcal C_k
       \text{ for every }k\in A\\
  \text{and every }g\in\Gamma_A
 \end{array}\right\}.
\]
Thus a copy of $\Theta_k$ uses the subspace of dimension $k$, and the
condition is imposed on every translate of each graph in the subfamily.
Put $V_A=\widehat\Omega_A$ and $H_A=V_A\rtimes\Gamma_A$.
Under the identification
$\Cr(H_A)\cong C(\Omega_A)\rtimes_r\Gamma_A$, let $u_{A,s}$ be the
canonical unitary corresponding to $(0,s)\in H_A$, for
$s\in S^{\pm1}$.  Using the map $\mathcal L$ from Step~3, define
$f_{A,s}\in C(\Omega_A)$ by
\[
 f_{A,s}(\omega)=
 \begin{cases}
  1,&s\in\mathcal L(\omega(1)),\\
  0,&s\notin\mathcal L(\omega(1)).
 \end{cases}
\]
The corresponding operator is
\[
 T_A=d\one-\sum_{s\in S^{\pm1}}
         f_{A,s}u_{A,s}f_{A,s^{-1}}\in\Cr(H_A).
\]
This is the construction of Step~3 applied to $\Gamma_A$ and $\Omega_A$.
All the preceding estimates apply with the same constants, so Step~5 gives
$\sigma(T_A)\subseteq\{0\}\cup[\eps,2d]$, with $\eps$ independent of $A$.
Using the continuous real function $\chi$ from Step~1, with $\chi(0)=1$
and $\chi=0$ on $[\eps,2d]$, define
\[
 b_A=\chi(T_A)\in\Cr(H_A).
\]
Since $\chi^2=\chi$ on $\sigma(T_A)$, continuous functional calculus
shows that $b_A$ is a projection; in the regular representation, it is
the orthogonal projection onto $\ker T_A$.  Step~6 gives
\[
 \tau_{H_A}(b_A)=\sum_{k\in A}2^{-k}.
\]
Binary expansions therefore realize every $q\in[0,2^{1-K}]$, with the
group $H_A$ allowed to depend on $q$.

To obtain an arbitrary $r\in(0,1)$, take a direct product with a fixed
finite cyclic group $C_N$.  If $e\in C^*(C_N)$ has trace $a/N$ and
$b_A$ has trace $q$, the orthogonal sum
\[
 B=e\otimes1+(1-e)\otimes b_A\in\Cr(C_N\times H_A)
\]
has trace $a/N+(1-a/N)q$.  Equivalently, this algebra is a direct sum
of $N$ copies of $\Cr(H_A)$: the projection is the identity in $a$
summands and $b_A$ in the others, and its trace is the average of these
traces.  \Cref{sec:prescribed} chooses $a$ and $q\in[0,2^{1-K}]$ so
that either $B$ or $1-B$ has trace $r$.  The resulting group
$G_r=C_N\times H_A\cong(C_N\times V_A)\rtimes\Gamma_A$ is again a
semidirect product with a locally finite abelian normal subgroup.

\subsection{Choosing the constants}
\label{sec:constants}

In this subsection, we list the constants and their order of choice.
First choose the geometric constants and the small-cancellation
parameter $\lambda$.  Start with an expander family indexed by
all sufficiently large $k$, and label the entire family with one
finite alphabet $S$ using \cref{lab:main}.  Next choose $m$ so that
$E=\Ftwo^m$ has enough nonzero vectors for the sets of outgoing labels,
and put $\eta=1/(100m)$.  Finally restrict to the indices $k\ge K$ so that
the overlap balls have fewer than $\eta n_k$ vertices and all required
girth bounds hold.  This restriction uses the existing labeling and
does not change $S$ or $m$.  All these constants, and the constants
derived from them later, are fixed before the subset $A$ or the
prescribed real number $r$ is chosen.

\section{Small-cancellation labelings with many graphs of each girth}
\label{sec:labeling}

In this section, we show that an infinite family of finite graphs,
containing many graphs of the same girth, can be labeled by finitely many
letters in such a way that every sufficiently long word is the label of at
most one simple path in the whole family (\Cref{lab:main}).  By the
graphical small-cancellation theory of Gromov and Osajda, the labels of
the closed paths in these graphs are then the relators of a finitely
generated group $\Gamma$ whose Cayley graph contains every graph of the
family isometrically (\Cref{lab:embedding}).  We call the graphs of
such a family \emph{relator graphs}.  This section, \Cref{sec:geometry},
and the choice of expanders in \Cref{sec:data} carry out Step 2 of the
strategy in \Cref{sec:strategy}.  We first recall the relevant
definitions.

\emph{Alphabets and labelings.}
Let $S$ be a finite set, let $S^{-1}=\{s^{-1}:s\in S\}$ be a set of formal
inverses, disjoint from $S$, and put $S^{\pm1}=S\cup S^{-1}$.  We extend
$s\mapsto s^{-1}$ to an involution of $S^{\pm1}$ without fixed points by
setting $(s^{-1})^{-1}=s$.  We call $S^{\pm1}$ an \emph{alphabet} and its
elements \emph{letters}; a \emph{word} is a finite sequence of letters.
Each edge of a graph has two orientations, and for an oriented edge $e$ we
write $\bar e$ for the same edge with the opposite orientation.  A
\emph{labeling} of a graph $\Theta$ by $S^{\pm1}$ is a map $\ell$ from the
set of oriented edges of $\Theta$ to $S^{\pm1}$ such that
$\ell(\bar e)=\ell(e)^{-1}$ for every oriented edge $e$.  A labeling is
\emph{reduced} if the oriented edges starting at any given vertex have
pairwise distinct labels.

\emph{Paths and their labels.}
A \emph{path} in $\Theta$ is a finite sequence $(e_1,\ldots,e_n)$ of
oriented edges such that $e_{j+1}$ starts at the endpoint of $e_j$; the
number $n$ is its \emph{length}.  Its \emph{label} is the word
$\ell(e_1)\ell(e_2)\cdots\ell(e_n)$.  A path is \emph{closed} if it ends
at its starting vertex, and \emph{simple} if its vertices are pairwise
distinct.  Paths are oriented: a path and its reverse
$(\bar e_n,\ldots,\bar e_1)$ are different paths, and the label of the
reverse is the formal inverse of the label.

\emph{The problem.}
The labeling argument of \cite[Lemma~2.2]{Osajda} passes to a sparse
subsequence of graphs.  We need one graph for each sufficiently large
integer, so our family contains many graphs of comparable girth.  Two
kinds of coincidences must be excluded: a word might be the label of
simple paths in two different graphs of the family, or the label of two
different simple paths in the same graph.  We exclude the second kind by
an elementary random coloring (\Cref{lab:within}), and the first kind by
the Lov\'asz local lemma (\Cref{lab:lll}), extending the argument of
\cite[Lemma~2.2]{Osajda}.

\begin{lemma}\label{lab:lll}
Let $(E_i)_{i\in J}$ be finitely many events.  Suppose that $N_i\subset
J\setminus\{i\}$ is a dependency neighborhood: $E_i$ is independent of the
sigma algebra generated by all $E_j$ with $j\notin N_i\cup\{i\}$.
If $0<x_i<1$ and
\[
 \Pr(E_i)\le x_i\prod_{j\in N_i}(1-x_j),
\]
then $\Pr(\bigcap_{i\in J}E_i^c)>0$.
\end{lemma}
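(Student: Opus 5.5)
This is the standard asymmetric Lovász local lemma, and I would prove it by the usual inductive conditional-probability argument. The key claim is the following. For every subset $U\subseteq J$ and every $i\in J\setminus U$ with $\Pr\bigl(\bigcap_{j\in U}E_j^c\bigr)>0$, we have
\[
 \Pr\Bigl(E_i\,\Bigm|\,\bigcap_{j\in U}E_j^c\Bigr)\le x_i .
\]
Granting this claim, order $J=\{i_1,\ldots,i_n\}$ and use the chain rule:
\[
 \Pr\Bigl(\bigcap_{i\in J}E_i^c\Bigr)=\prod_{t=1}^n\Pr\Bigl(E_{i_t}^c\Bigm|\bigcap_{s<t}E_{i_s}^c\Bigr)\ge\prod_{i\in J}(1-x_i)>0 .
\]
Each conditioning event has positive probability by induction on $t$, since the product up to step $t-1$ is already positive. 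This uses $x_i<1$.

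I prove the claim by induction on $|U|$. If $U=\varnothing$, then $\Pr(E_i)\le x_i\prod_{j\in N_i}(1-x_j)\le x_i$. For general $U$, split $U=U_1\cup U_2$ with $U_1=U\cap N_i$ and $U_2=U\setminus N_i$. Write $A_1=\bigcap_{j\in U_1}E_j^c$ and $A_2=\bigcap_{j\in U_2}E_j^c$. If $U_1=\varnothing$, independence of $E_i$ from the $\sigma$-algebra generated by the $E_j$ with $j\notin N_i\cup\{i\}$ gives $\Pr(E_i\mid A_2)=\Pr(E_i)\le x_i$. Otherwise, write
\[
 \Pr(E_i\mid A_1\cap A_2)=\frac{\Pr(E_i\cap A_1\mid A_2)}{\Pr(A_1\mid A_2)} .
\]
Here $\Pr(A_2)>0$ because $A_2\supseteq A_1\cap A_2$. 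The numerator is at most $\Pr(E_i\mid A_2)=\Pr(E_i)\le x_i\prod_{j\in N_i}(1-x_j)$, again by independence.

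For the denominator, list $U_1=\{j_1,\ldots,j_r\}$ and apply the chain rule:
\[
 \Pr(A_1\mid A_2)=\prod_{t=1}^r\Bigl(1-\Pr\Bigl(E_{j_t}\Bigm|\bigcap_{s<t}E_{j_s}^c\cap A_2\Bigr)\Bigr) .
\]
Each conditioning set is indexed by a subset of $U$ of size less than $|U|$, because $i\notin U$ and $j_t$ is excluded from its own conditioning. Each conditioning event has positive probability, since it contains $A_1\cap A_2$. So the induction hypothesis bounds each conditional probability by $x_{j_t}$. Hence $\Pr(A_1\mid A_2)\ge\prod_{j\in U_1}(1-x_j)\ge\prod_{j\in N_i}(1-x_j)$, where the last step uses $U_1\subseteq N_i$ and $0<1-x_j\le 1$. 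Dividing the two bounds gives the claim.

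The only point requiring care is the independence step. The hypothesis asserts independence from the $\sigma$-algebra generated by the $E_j$ outside $N_i\cup\{i\}$, and $A_2$ lies in that $\sigma$-algebra. Therefore $\Pr(E_i\cap A_2)=\Pr(E_i)\Pr(A_2)$, which is exactly what both uses above require. The rest is routine bookkeeping with positivity of the conditioning events, which I handle by strengthening the induction to carry the positivity hypothesis, as done above.
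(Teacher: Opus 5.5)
Your proof is correct and follows the paper's argument: induction on the size of the conditioning set, splitting it into its part inside $N_i$ and the rest, bounding the numerator by independence and the denominator by the chain rule together with the induction hypothesis, and finishing with the chain rule over $J$. The only difference is cosmetic: you carry positivity of the conditioning event as a hypothesis of the inductive claim, whereas the paper derives it from the preceding inductive bounds.
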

\begin{proof}
Induction on $|B|$ proves
\begin{equation}\label{lab:conditional}
 \Pr\bigl(E_i\mid\bigcap_{j\in B}E_j^c\bigr)\le x_i
 \qquad (i\notin B).
\end{equation}
All conditioning events are positive by the preceding inductive bounds.
For the inductive step partition $B$ as $B_1=B\cap N_i$ and
$B_2=B\setminus N_i$.  Independence bounds the numerator, after conditioning
on $\bigcap_{B_2}E_j^c$, by $\Pr(E_i)$.  Express the additional conditioning
probability for $B_1$ as a product in any order.  Each factor is at least
$1-x_j$ by the induction hypothesis, since fewer than $|B|$ events have
already been conditioned on.  Consequently the conditional probability
in \eqref{lab:conditional} is at most
\[
 \frac{\Pr(E_i)}{\prod_{j\in B_1}(1-x_j)}\le x_i.
\]
If $B_1$ is empty, independence proves the claim directly.  Finally expand
$\Pr(\bigcap_J E_i^c)$ as a product of successive conditional probabilities;
it is at least $\prod_J(1-x_i)>0$.
\end{proof}

\begin{lemma}\label{lab:within}
Fix $D\ge3$, $A>0$, and $0<\lambda\le1/24$.  There is a finite alphabet,
depending only on these constants, with the following property.  Every
finite connected simple graph $\Theta$ of maximum degree at most $D$,
girth $\sigma$, and diameter at most $A\sigma$, with $\lambda\sigma>3$,
has a reduced labeling by this alphabet in which different simple paths
of length $\lceil\lambda\sigma\rceil$ have different labels.
\end{lemma}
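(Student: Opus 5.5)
The plan is to build the labeling as a product of two independent pieces. The first piece is deterministic and guarantees the reduced property. The second piece is random and separates labels of distinct simple paths. Fix $L=\lceil\lambda\sigma\rceil$; then $L\ge4$ because $\lambda\sigma>3$.

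\emph{Deterministic part.} Using Vizing's theorem, choose a proper edge colouring $c$ of $\Theta$ with colours in $\{1,\dots,D+1\}$, and orient every edge arbitrarily. For a second parameter $M$, to be fixed below and depending only on $D,A,\lambda$, take the alphabet
\[
 S=\{1,\dots,D+1\}\times\{1,\dots,M\}.
\]
Independently and uniformly, give each edge a random tag $t(e)\in\{1,\dots,M\}$. Label the chosen orientation of $e$ by $(c(e),t(e))\in S$, and the opposite orientation by its formal inverse. Reducedness then holds for every choice of tags. Indeed, two oriented edges leaving a vertex $v$ have different colours, so their labels are distinct letters whether or not they are inverted.

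\emph{Rigidity from reducedness.} Let $p=(v_0,\dots,v_L)$ and $q=(w_0,\dots,w_L)$ be distinct simple paths of length $L$ with the same label. In a reduced labeling a path is determined by its label together with any one of its vertices at a known position. Reading the label backwards from that vertex uses the same fact, since incoming edges at a vertex are the reverses of outgoing ones. Hence $v_j\ne w_j$ for all $j$. If the $j$-th oriented edges satisfy $e_j^p=\overline{e_j^q}$, equality of labels would force a letter to equal its own inverse, which is impossible. So every index $j$ gives a genuine constraint $t(e^p_j)=t(e^q_j)$ between two distinct undirected edges; if the two edges have different colours the event is already impossible.

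\emph{Probability bound.} Form the constraint graph $\Phi$. Its vertices are the undirected edges of $\Theta$ used by $p$ or $q$, and it has an edge $\{e^p_j,e^q_j\}$ for each index $j$. The event that $p$ and $q$ have equal labels is contained in the event that $t$ is constant on each component of $\Phi$. This event has probability $M^{-(|\Phi|-\#\text{components})}$. Since $p$ is simple, its $L$ edges are distinct vertices of $\Phi$. Every component of $\Phi$ has at least two vertices, because each vertex lies on a constraint edge joining two distinct edges. Hence the rank $|\Phi|-\#\text{components}$ is at least $|\Phi|/2\ge L/2$. I expect this step to be the main obstacle: one must check carefully that overlaps between $p$ and $q$, including shifted or reversed reuse of edges, cannot collapse the rank. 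The argument above handles this using only simplicity of $p$ and the reduced property.

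\emph{Counting.} Connectedness, maximum degree $D$ and diameter at most $A\sigma$ give
\[
 |\Vertices(\Theta)|\le D^{A\sigma+1}\le D^{(A/\lambda+1)L}.
\]
There are at most $|\Vertices(\Theta)|D^L$ paths of length $L$, so the number of pairs is at most $D^{2(A/\lambda+2)L}$. Choose $M>D^{4A/\lambda+8}$, which depends only on $D,A,\lambda$. Then the expected number of bad pairs is
\[
 D^{2(A/\lambda+2)L}M^{-L/2}<1.
\]
Consequently some choice of tags has no bad pair, and this gives the required labeling by the alphabet $S$. The whole argument is a union bound; the Lov\'asz local lemma of \Cref{lab:lll} is not needed here.
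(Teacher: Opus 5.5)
Your proposal is correct and matches the paper's proof step for step: a proper edge colouring gives reducedness, independent random tags separate the labels, the constraint multigraph has rank at least $L/2$, and a union bound closes the argument with the same threshold $M>D^{4A/\lambda+8}$. The only differences are cosmetic. You use Vizing's $D+1$ colours instead of a greedy $(2D-1)$-colouring, and you bound the rank by noting that every component has at least two vertices, where the paper analyses a degree-two multigraph as a union of paths and cycles.
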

\begin{proof}
Write $\ell_0=\lceil\lambda\sigma\rceil$ for the length of the paths to be
compared.  Color the edges of $\Theta$ with at most $2D-1$ colors so that
edges with a common vertex have different colors; a greedy coloring
succeeds because an edge meets at most $2D-2$ other edges.  Give each edge
an arbitrary reference orientation.  The \emph{first labeling} assigns to
an edge of color $c$ the letter $a_c$ in its reference orientation and
$a_c^{-1}$ in the other orientation.  It is reduced, because the edges at a
vertex have different colors.

Next, choose independently for each edge a second color $t$, uniformly
from $\{1,\ldots,M\}$.  The \emph{full labeling} assigns to an edge with
colors $c$ and $t$ the letter $a_{c,t}$ in its reference orientation and
$a_{c,t}^{-1}$ in the other orientation.  It is reduced, because the first
labeling is.

Consider different simple paths $P=(e_1,\ldots,e_{\ell_0})$ and
$Q=(f_1,\ldots,f_{\ell_0})$.  Their labels in the full labeling can agree
only if their labels in the first labeling agree.  In that case no $e_j$ and $f_j$ are the same unoriented
edge.  Indeed, opposite orientations give different first letters; the
same orientation gives a common vertex at the same position.  Reducedness
then determines both paths from this vertex, forwards and backwards, and
would imply $P=Q$.

For a pair whose labels in the first labeling agree, equality of the
labels in the full labeling imposes the $\ell_0$ equations $t(e_j)=t(f_j)$,
where $t(e)$ is the second color of $e$.  Form the multigraph whose vertices are the
unoriented edges occurring in $P\cup Q$ and whose edges are these equations.
It has no loops.  Each vertex has degree at most two, since each simple path
uses an edge at most once.  Every nontrivial component is a path or a cycle,
where a double edge is permitted as a cycle of length two.  If a component
has $v$ vertices and $e$ edges, all its $v$ colors must agree, an event
of probability $M^{-(v-1)}$.  Since $v-1\ge e/2$, multiplying over the
components bounds the probability of all equations by $M^{-\ell_0/2}$.

A degree bound gives
\[
 |\Theta|\le D^{\diam\Theta+1}\le D^{A\sigma+1}.
\]
There are at most $|\Theta|D^{\ell_0}$ paths of length $\ell_0$, hence at
most $|\Theta|^2D^{2\ell_0}$ ordered pairs.  Since $\sigma\le\ell_0/\lambda$,
the probability that some pair of different simple paths has equal labels
is at most
\[
 D^2\bigl(D^{2A/\lambda+2}M^{-1/2}\bigr)^{\ell_0}.
\]
Choose an integer $M>D^{4A/\lambda+8}$.  This expression is less than one
for $\ell_0\ge4$.  Hence some choice of second colors gives a labeling with
the required property.
\end{proof}

\begin{theorem}\label{lab:main}
Let $(\Theta_i)_{i\in I}$ be a finite or countable family of finite connected
simple graphs.  Suppose their degrees are at most $D\ge3$, their girths
are $\sigma_i$, and
\[
 \diam\Theta_i\le A\sigma_i,\qquad \lambda\sigma_i>3,
 \qquad 0<\lambda\le1/24.
\]
Put $\gamma_i=\lfloor\lambda\sigma_i\rfloor$.  Assume that there are
$C_0,B\ge1$ such that
\begin{equation}\label{lab:multiplicity}
 \#\{i\in I:\gamma_i=m\}\le C_0B^m\qquad(m\ge1).
\end{equation}
Then there are a finite alphabet and a reduced labeling of every
$\Theta_i$ by this alphabet with the following property: the label of a
simple path in $\Theta_i$ of length at least $\lambda\sigma_i$ is not the
label of any other simple path in any of the graphs $\Theta_j$, $j\in I$.
The alphabet and the labeling can be chosen for the full family and then
restricted to any subfamily.
\end{theorem}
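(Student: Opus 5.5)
We combine \Cref{lab:within} for coincidences inside one graph with \Cref{lab:lll} for coincidences between different graphs. The alphabet is a product: a \emph{within-graph} letter from the alphabet of \Cref{lab:within} (depending only on $D$, $A$, $\lambda$), times a \emph{between-graph} color from $\{1,\dots,M'\}$, with $M'$ chosen later. Label each $\Theta_i$ with letters $a_{c,t,t'}$, where $(c,t)$ comes from the fixed labeling of \Cref{lab:within} and $t'$ is an independent uniform color per edge. Reducedness follows from the first coordinate.

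It suffices to handle paths of length exactly $\ell_i=\lceil\lambda\sigma_i\rceil$, since a longer simple path has an initial subpath of that length. We also reduce to comparing paths of equal length. Suppose a simple path $P$ in $\Theta_i$ of length $\ell_i$ and a simple path $Q$ in $\Theta_j$ have the same label. Then the initial segment of $Q$ of length $\ell_i$ has the same label as $P$. If $j=i$, \Cref{lab:within} forces equality. So the only bad events are, for $i\ne j$ with $\gamma_j\le\gamma_i$ (after symmetrizing) and simple paths $P\subset\Theta_i$, $Q\subset\Theta_j$ of common length $L=\min(\ell_i,\ell_j)$, that $P$ and $Q$ have equal full labels. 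Since the edges of $P$ and $Q$ lie in different graphs, equality of the $t'$-colors is an event of probability $M'^{-L}$, with no graph-structure subtlety.

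For the infinite family, first prove the finite case via \Cref{lab:lll}. For each pair $(i,j)$, let $E_{i,j}$ be the event that some pair of equal-length simple paths of length $L_{ij}=\min(\ell_i,\ell_j)$ in $\Theta_i\times\Theta_j$ has equal labels. Using $|\Theta_i|\le D^{A\sigma_i+1}$, we get
\[
\Pr(E_{i,j})\le D^{2A(\sigma_i+\sigma_j)+2}D^{2L_{ij}}M'^{-L_{ij}}.
\]
This is not small if $\sigma_j\gg\sigma_i$. Therefore define the events with the path in the larger graph restricted: a path $Q$ in $\Theta_j$ of length $L$ is determined by its start vertex, and $E_{i,j}$ depends only on colors of $\Theta_i$ and $\Theta_j$. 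The problem with a large $|\Theta_j|$ remains, and this is the main obstacle. The fix I would try is to refine the events to $E_{P,Q}$ for individual path pairs, which depend only on the $\le 2L$ edges involved. Then $\Pr(E_{P,Q})\le M'^{-L}$. The dependency neighborhood of $E_{P,Q}$ consists of events sharing an edge. For each edge and each length $L'$, the number of path pairs of length $L'$ through it is at most $L'D^{L'}\cdot\sum_{k:\gamma_k\ge\ldots}|\Theta_k|D^{L'}$, and that sum is unbounded. So instead, parametrize by the path in the \emph{smaller-girth} graph and note that a label of length $L$ read in $\Theta_j$ from a given start vertex is unique by reducedness. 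Counting therefore uses label words: for fixed $P$, the number of words of length $L$ is at most $|S^{\pm1}|^{L}$ via the first two coordinates, which is bounded independently of $j$.

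Concretely, define for each simple path $P$ of length $L$ in $\Theta_i$ and each $m\ge \gamma_i$ the event $F_{P,m}$ that some simple path in a graph $\Theta_j\ne\Theta_i$ with $\gamma_j=m$ has label equal to that of $P$. Bound the number of such $j$ by $C_0B^m$ using \eqref{lab:multiplicity}, and take $x_{P,m}=(2B D)^{-2L}$ with $M'$ large relative to $B$, $D$, $A/\lambda$, as in the proof of \Cref{lab:within}. The local lemma inequality is then a geometric-series check; the delicate step is bounding dependency neighborhoods uniformly, and I expect this to need the multiplicity hypothesis together with a finite truncation. Finally, pass from finite subfamilies to the whole family by compactness (K\"onig's lemma) on the finite color choices per edge, since each bad event involves finitely many edges. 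The restriction statement is then immediate: any subfamily's simple paths form a subset.
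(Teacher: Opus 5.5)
Your overall setup matches the paper's: a product of the alphabet from \Cref{lab:within} with a between-graph coordinate, and a reduction to initial segments of length $\min(\ell_i,\ell_j)$. You also correctly locate the obstacle. But the proposed resolution does not overcome it, so the between-graph part, which is the heart of the theorem, remains unproved. With all colors $t'$ chosen simultaneously and independently, the event $F_{P,m}$ is not small. A union bound over the at most $C_0B^m$ graphs at level $m$ gives roughly $C_0B^m\,|\Theta_j|\,D^{L}M'^{-L}$. Here $L$ is fixed by $P$, while $|\Theta_j|$ is exponential in $\sigma_j$, so the bound grows with $m$. Nothing better holds in general: if the within-graph word of $P$ occurs along many edge-disjoint paths of a large $\Theta_j$, an independent coloring reproduces the full label of $P$ with probability close to one. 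Counting possible words does not help, since a large graph can realize essentially all of them. In addition, all the events $F_{P,m}$ with $m\ge\gamma_i$ contain the edges of $P$, and each depends on every edge of every graph at level $m$. With $x_{P,m}$ independent of $m$, the product $\prod_m(1-x_{P,m})$ in \Cref{lab:lll} vanishes. Any finite truncation makes the required $M'$ depend on the truncation, so your final compactness step has no uniform alphabet to work with.

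The missing idea is to break the symmetry by conditioning on the smaller graphs. Enumerate $I$ in nondecreasing order of $\gamma_i$; this is possible because every level in \eqref{lab:multiplicity} is finite. Then label the graphs one at a time. When $\Theta_n$ is being labeled, the labels of each earlier $\Theta_i$ are already fixed. So the words of length $\gamma_i$ read along paths in $\Theta_i$ form a fixed forbidden set of size at most $|\Theta_i|D^{\gamma_i}\le C^{\gamma_i}$, with $C=D^{2A/\lambda+2}$. This is where $\diam\Theta_i\le A\sigma_i$ enters, and the bound is controlled by the smaller graph. The bad events are indexed by pairs $(i,Q)$ with $i<n$ and $Q$ a simple path of length $\gamma_i$ in the current graph $\Theta_n$. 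Each such event:
\begin{itemize}
\item depends only on the $\gamma_i$ edges of $Q$;
\item has probability at most $(C/M')^{\gamma_i}$;
\item meets at most $\gamma_i\gamma_j(2D)^{\gamma_j}$ events of type $j$.
\end{itemize}
Take weights $x=a^{-\gamma_i}$ with $a>\max\{2,2DB\}$. The multiplicity hypothesis makes $S_0=\sum_j\gamma_j(2D/a)^{\gamma_j}$ finite, and $M'\ge aCe^{2S_0}$ then verifies \Cref{lab:lll} uniformly in $n$. Each step is a finite application of the local lemma inside one graph, so the induction labels the entire countable family directly and no compactness argument is needed. Combining this first labeling with \Cref{lab:within}, as you propose, finishes the proof.
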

\begin{proof}
Each level in \eqref{lab:multiplicity} is finite, so we can enumerate $I$
in nondecreasing order of $\gamma_i$.  We first distinguish paths in
different graphs.  Choose a real number $a>\max\{2,2DB\}$ and put
\[
 S_0=\sum_{i\in I}\gamma_i(2D/a)^{\gamma_i}.
\]
This sum converges by \eqref{lab:multiplicity}.  Set
$C=D^{2A/\lambda+2}$, and choose an even integer
\begin{equation}\label{lab:alphabet}
 L\ge aC\exp(2S_0).
\end{equation}
Use an alphabet with $L$ letters, that is, $S^{\pm1}$ with $\abs S=L/2$.

Suppose the first $n-1$ graphs have already been labeled.  Give each edge
of $\Theta_n$ a reference orientation and label it, in that orientation,
by a letter chosen uniformly and independently from the $L$ letters; the
opposite orientation receives the inverse letter.  For $i<n$ and a simple
path $P$ of length $\gamma_i$ in $\Theta_n$, let $E_{i,P}$ be the event that
the label of $P$ equals the label of some path of length $\gamma_i$ in the
already labeled graph $\Theta_i$.
Since $P$ is simple, its unoriented edges are distinct and receive
independent random labels.
The number of labels of paths of length $\gamma_i$ in $\Theta_i$ is at most
\[
 |\Theta_i|D^{\gamma_i}
 \le D^{A\sigma_i+1+\gamma_i}
 \le C^{\gamma_i},
\]
where $\sigma_i<(\gamma_i+1)/\lambda\le2\gamma_i/\lambda$ was used.
Therefore
\begin{equation}\label{lab:bad-probability}
 \Pr(E_{i,P})\le (C/L)^{\gamma_i}.
\end{equation}

Two such events are independent whenever their paths in $\Theta_n$ have
no common edge.  A fixed path of length $\gamma_i$ meets at most
\[
 \Delta_{ij}:=\gamma_i\gamma_j(2D)^{\gamma_j}
\]
oriented paths of length $\gamma_j$: choose a shared edge, its position and
orientation in the other path, and then its remaining edges.  The events are indexed separately for each earlier graph $\Theta_j$,
also when several $\gamma_j$ have the same value; the labels of paths in
$\Theta_j$ that must be avoided are accounted for in
\eqref{lab:bad-probability}.

Assign $x_i=a^{-\gamma_i}$ to every event of type $i$.  Because
$\log(1-x)\ge-2x$ for $0\le x\le1/2$, we obtain
\begin{align*}
 x_i\prod_{j<n}(1-x_j)^{\Delta_{ij}}
 &\ge a^{-\gamma_i}
       \exp\left(-2\gamma_i\sum_{j<n}
                 \gamma_j(2D/a)^{\gamma_j}\right)\\
 &\ge a^{-\gamma_i}e^{-2S_0\gamma_i}
 \ge (C/L)^{\gamma_i}.
\end{align*}
\Cref{lab:lll} supplies a labeling of $\Theta_n$ for which none of the
events $E_{i,P}$ occurs.  By induction we obtain a labeling of all the
graphs, which we call the \emph{first labeling}, such that for $i<n$ no
label of a path of length $\gamma_i$ in $\Theta_i$ is the label of a simple
path in $\Theta_n$.  Thus two simple paths in these graphs cannot have a
common label of length at least $\lambda\sigma_i$, or of length at least
$\lambda\sigma_n$, by taking an initial subpath of length $\gamma_i$.

Next apply \Cref{lab:within} to every graph separately, with the same
alphabet for all graphs; call the result the \emph{second labeling}.
Finally, label each oriented edge by the pair consisting of its letters in
the first and in the second labeling, with $(a,b)^{-1}=(a^{-1},b^{-1})$.
The resulting alphabet is finite and the labeling is reduced.  Two simple
paths with the same label have the same label in both labelings; by the
first labeling they lie in the same graph, and by the second they are
equal.  This proves the required uniqueness.
\end{proof}

\begin{remark}\label{lab:logarithmic}
If the family is indexed by integers $k\ge K$ and
$\sigma_k\ge c_0\log k$, then \eqref{lab:multiplicity} follows immediately:
$\gamma_k=m$ implies $k<\exp((m+1)/(\lambda c_0))$.  Repetitions of an
unlabeled graph are allowed; they are different instances in the disjoint
family and receive labels separately.
\end{remark}

\begin{theorem}\label{lab:embedding}
For a reduced labeling as in \Cref{lab:main}, with $\lambda<1/24$, let
\[
 \Gamma=\langle S\mid\text{labels of closed paths in all }\Theta_i\rangle.
\]
Fix a vertex $o_i$ of each $\Theta_i$, and let $\varphi_i$ send a vertex
$v$ of $\Theta_i$ to the element of $\Gamma$ represented by the label of a
path from $o_i$ to $v$.  Then $\varphi_i$ is well defined, and it is an
isometric embedding of the vertex set of $\Theta_i$, with its path metric,
into the Cayley graph $\Cay(\Gamma,S)$ with its word metric.
\end{theorem}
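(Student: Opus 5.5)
Well-definedness is formal, and I will handle it first.  Two paths from $o_i$ to $v$ differ by a closed path.  More precisely, the label of the first path times the inverse of the label of the second is the label of a closed path in $\Theta_i$, which is a relator.  So both labels represent the same element of $\Gamma$.  Adjacent vertices go to elements differing by a generator, so $\varphi_i$ is $1$-Lipschitz: $d_\Gamma(\varphi_i(u),\varphi_i(v))\le d_{\Theta_i}(u,v)$.  What remains is the reverse inequality, which also gives injectivity.

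For the reverse inequality I will reduce to the graphical small-cancellation theory of Gromov and Ollivier, in the form used by Osajda \cite{Osajda}.  The condition to verify is graphical $C'(\lambda)$: every \emph{piece} has length less than $\lambda$ times the girth of each graph containing it.  A piece is a path whose label is also read by a different path, in the same or another relator graph, with the two paths not differing by a label-preserving automorphism.  The uniqueness property of \Cref{lab:main} gives exactly this.

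One subtlety is non-simple paths.  If a path in $\Theta_i$ has length at most $\lambda\sigma_i<\sigma_i/24$, it cannot contain a cycle and reduced words trace geodesics locally.  So any piece of length $\ge\lambda\sigma_i$ contains an initial subpath that is simple and of length $\lceil\lambda\sigma_i\rceil$.  Apply \Cref{lab:main} to that subpath: its label is read by only one simple path in the whole family.  Since the labeling is reduced, a path is determined by its starting vertex and its label.  Hence the two occurrences of the piece coincide, and the piece is not a genuine piece.  Also, automorphisms preserving labels must be trivial for the same reason.

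With $C'(1/24)$, and $\lambda<1/24\le1/6$, I will invoke the standard theorem: for a reduced labeling satisfying graphical $C'(1/6)$, each relator graph embeds isometrically into the Cayley graph (Ollivier's \emph{Small cancellation theory for graphical presentations}; Gruber's thesis gives the version for infinitely many, not necessarily reduced words).  If a self-contained argument is needed, I would reprove it via Strebel's classification.  Take a geodesic word $w$ in $\Gamma$ joining $\varphi_i(u)$ to $\varphi_i(v)$.  Together with the label of a $\Theta_i$-geodesic, this gives a closed word.  Take a minimal van Kampen diagram whose boundary is that closed word.  Under $C'(1/6)$, a reduced diagram has a face with at least one long exterior arc, more than half its boundary, with interior arcs each shorter than $1/6$ of the face boundary (Greendlinger's lemma).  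Such a face cannot lie along the geodesic side, since replacing the arc would shorten $w$.  It also cannot lie along the $\Theta_i$-geodesic side: its boundary path lies in some relator graph, and by the piece condition it must come from $\Theta_i$ itself.  Then the face could be absorbed, contradicting minimality.

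The main obstacle is this last step: making rigorous that faces adjacent to the $\Theta_i$ side can be folded into $\Theta_i$.  It requires the diagram-reduction step where faces labelled by closed paths of the same graph that share a long arc are merged.  This merging is what the ``not differing by an automorphism'' clause handles.  I would cite Ollivier/Gruber for this rather than redo it, checking only that our hypotheses (reduced, pieces shorter than $\lambda\cdot\girth$) match theirs.
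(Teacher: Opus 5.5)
Your approach is the paper's: well-definedness from the relators, then verification of the graphical condition $C'(\lambda)$ from the uniqueness property of \Cref{lab:main} and reducedness, then an appeal to the standard metric embedding theorem. The paper cites Osajda's version for $C'(1/24)$. Your citation of the $C'(1/6)$ version of Ollivier and Gruber is equally valid, and your Greendlinger sketch is not needed, since the paper also just cites.

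One step needs a small repair. Suppose the two occurrences of a piece lie in $\Theta_i$ and $\Theta_j$. You pass to initial segments of length $\lceil\lambda\sigma_i\rceil$. The occurrence in $\Theta_j$ is only an immersed path, and it need not be simple if $\sigma_j\le\lceil\lambda\sigma_i\rceil$. This case can occur, because girths in the family vary widely. So \Cref{lab:main} cannot yet be applied to compare the two segments.

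The fix is to take initial segments of length $\lceil\lambda\min\{\sigma_i,\sigma_j\}\rceil<\min\{\sigma_i,\sigma_j\}$. An immersed path shorter than the girth is simple, so both segments are simple. Applying \Cref{lab:main} in the graph of smaller girth forces the two segments to be the same path. Reducedness then forces the two occurrences to coincide.

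Relatedly, your phrase ``cannot contain a cycle'' must be stated for immersed paths. A short path with a backtrack is not simple. With reduced labelings, immersed paths are exactly the paths with freely reduced labels.

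This gives pieces of length $<\lambda\min\{\sigma_i,\sigma_j\}$, hence $C'(\lambda)$ in both graphs, which is exactly how the paper argues.
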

The map $\varphi_i$ is well defined because two paths from $o_i$ to $v$
differ by a closed path, whose label is a relator.  A path is
\emph{immersed} if it never traverses an edge and then immediately the
same edge in the opposite direction.  A \emph{piece} is a word that is the
label of two different immersed paths in the family.  The graphical
small-cancellation condition $C'(\lambda)$ requires that every piece that
is the label of a path in $\Theta_i$ have length less than $\lambda\sigma_i$.
\Cref{lab:main} implies this condition: if a word were the label of two
different immersed paths of length at least
$\lambda\min\{\sigma_i,\sigma_j\}$, an initial segment of length
$\lceil\lambda\min\{\sigma_i,\sigma_j\}\rceil<\min\{\sigma_i,\sigma_j\}$
would be a simple path in each of the two graphs, since an immersed path
shorter than the girth is simple.  By \Cref{lab:main}, these two simple
paths with the same label coincide.  Since the labeling is reduced, a path
is determined by its starting vertex and its label, so the two immersed
paths coincide as well, a contradiction.  Under the condition
$C'(1/24)$, \Cref{lab:embedding} is the metric embedding theorem for
graphical small-cancellation presentations; see
\cite[Lemma~3.1 and Theorem~3.2]{Osajda}.  In \Cref{sec:geometry}, we build
on this theorem to control the intersections of the translates of the
embedded graphs.

\section{Geometry of the embedded relator graphs}
\label{sec:geometry}

In this section, we study how the translates of the relator graphs
intersect inside the Cayley graph $X$ of $\Gamma$.  We call these
translates \emph{copies}.  Different copies may overlap, and a single copy
may meet infinitely many others.  Our main result is that these overlaps
are nevertheless controlled simultaneously: once a base point is fixed,
all intersections of a copy $P$ with the copies that are no farther from
the base point lie in one ball of $P$ whose radius is at most
$\girth(P)/16$ (\cref{geo:overlap}).  Because the girth is large, this ball
is a tree with few vertices.  As a consequence, every finitely supported
cycle in $X$ decomposes uniquely as a sum of cycles in copies
(\cref{geo:cycle-sum}).  These are the overlap estimates of Step 2 of the
strategy.  They are used in \Cref{mod:section} to show that the linear
equations imposed on different copies are compatible, and in
\Cref{op:section} to prove the spectral gap.

Following \cite[Sections~2--3]{OpenAI2026}, we obtain the overlap estimate
in three steps: we add a cone point over each copy, we estimate van Kampen
diagrams over the presentation of $\Gamma$, and we use the hyperbolicity
of the resulting coned-off graph.  The diagram estimate and the
hyperbolicity of the coned-off graph are variants of standard facts in
graphical small-cancellation theory; the simultaneous control of all
overlaps of a copy by one ball is the ingredient specific to the present
construction.

\subsection{Relator graphs and the coned-off graph}

In this subsection, we state the properties of the labeled relator graphs
that are used throughout this section, and we introduce the coned-off
graph, obtained from the Cayley graph by adding one new vertex for each
copy.

\begin{definition}[Geometric hypotheses]
\label{geo:hypotheses}
Let $(\Theta_i)_{i\in I}$ be a countable family of finite, connected, simple,
$d$-regular graphs, where $d\geq3$ is fixed, and write
$\sigma_i=\girth\Theta_i$.  Give their oriented edges labels in a finite
alphabet $S^{\pm1}$, with opposite orientations receiving inverse labels.
We require the following properties, for a fixed number $\lambda$:
\begin{enumerate}[label=(\roman*)]
\item $0<\lambda\leq1/24$ and $\lambda\sigma_i>3$ for every $i$;
\item the labeling is reduced: distinct edges leaving a vertex have
      distinct labels;
\item if a simple oriented path in $\Theta_i$ has length at least
      $\lambda\sigma_i$, its label does not occur on any other simple
      oriented path anywhere in the disjoint union of the $\Theta_i$;
\item for
\[
 \Gamma=\langle S\mid \text{labels of closed paths in the }\Theta_i\rangle,
 \qquad X=\Cay(\Gamma,S),
\]
the based label maps $\varphi_i:\Theta_i\longrightarrow X$ are isometric
embeddings on vertices.
\end{enumerate}
At this stage we regard $X$ as the \emph{formal} Cayley graph: its
vertex set is $\Gamma$, and for every $g\in\Gamma$ and $s\in S^{\pm1}$
there is an oriented edge from $g$ to $gs$ labeled $s$, whose reverse is
the oriented edge from $gs$ to $g$ labeled $s^{-1}$.  A priori this graph
might have loops or multiple edges; \Cref{geo:avoidance} shows that it is
simple.  Choose one base vertex in each $\Theta_i$ and normalize
$\varphi_i$ to send it to the identity.
\end{definition}

For $g\in\Gamma$ and $i\in I$, let $P_{g,i}=g\varphi_i(\Theta_i)$ be the
translated labeled subgraph of $X$.  We call $P_{g,i}$ an \emph{indexed
copy}, or simply a \emph{copy}, of \emph{type} $i$.  The graph $\Theta_i$ is
its \emph{source graph}, the element $g$ is its \emph{translating element},
and the map
\[
 \iota_P=g\varphi_i\colon\Theta_i\longrightarrow P=P_{g,i}
\]
is its \emph{identification} with the source graph.  Since $\varphi_i$ is
injective on vertices and preserves labels, $\iota_P$ is an isomorphism of
labeled graphs.  For a path $\gamma$ in $P$, the path $\iota_P^{-1}(\gamma)$ in
$\Theta_i$ is its \emph{preimage} in the source graph; it has the same label
as $\gamma$, and it is simple if $\gamma$ is simple.  By \Cref{geo:copies},
different pairs $(g,i)$ give different labeled subgraphs.  For a copy $P$ we write
$d_P$ for its path metric, that is, the graph distance in $P$; balls for
$d_P$ are called \emph{intrinsic balls} of $P$.  We write
$\sigma(P)=\sigma_i$ for the girth of a copy of type $i$.

\begin{lemma}
\label{geo:copies}
Suppose that two indexed copies $P$ and $Q$ share a simple edge path whose
length is at least $\lambda\sigma(P)$.  Then $P$ and $Q$ are the same indexed
copy, that is, they have the same type and the same translating element.
In particular, different pairs $(g,i)$ give different labeled subgraphs of
$X$.
\end{lemma}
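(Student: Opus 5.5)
Let $\gamma$ be a simple edge path of length $\ell\ge\lambda\sigma(P)$ lying in both $P=P_{g,i}$ and $Q=P_{h,j}$. The plan is to transport $\gamma$ back to the two source graphs and compare there, using hypothesis (iii) of \Cref{geo:hypotheses}. Since $\iota_P$ and $\iota_Q$ are isomorphisms of labeled graphs onto their images, $\alpha=\iota_P^{-1}(\gamma)$ is a simple path in $\Theta_i$ and $\beta=\iota_Q^{-1}(\gamma)$ is a simple path in $\Theta_j$. Both have the same label as $\gamma$, because each oriented edge of $X$ carries a single label.

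Because $\alpha$ is simple, has length at least $\lambda\sigma_i$, and shares its label with the simple path $\beta$ in the disjoint union of the $\Theta_k$, hypothesis (iii) forces $\alpha=\beta$ as paths in the disjoint union. Hence $i=j$, and the two preimages of the initial vertex $x$ of $\gamma$ coincide: there is a vertex $v\in\Theta_i$ with $g\varphi_i(v)=x=h\varphi_i(v)$. Cancelling $\varphi_i(v)$ in the group gives $g=h$. So $P$ and $Q$ have the same type and the same translating element.

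For the ``in particular'', suppose $(g,i)$ and $(h,j)$ give the same labeled subgraph $P$. By (i) we have $\lambda\sigma_i>3$, and since $\lambda\le1/24$ we get $\lceil\lambda\sigma_i\rceil<\sigma_i$. A connected $d$-regular graph with $d\ge3$ contains simple paths of any length up to the girth, for example a geodesic segment or an arc of a shortest cycle. Take such a simple path of length $\lceil\lambda\sigma_i\rceil$ in $\Theta_i$ and map it by $\iota_P$; its image is a simple path shared by the two indexed copies, and the first part applies.

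The only delicate point is that $\iota_P$ is injective on vertices, so that a simple path in $X$ pulls back to a simple path in $\Theta_i$ and preimages of edges are well defined. This is supplied by hypothesis (iv), since isometric embeddings on vertices are injective. One must also note that an edge of $X$ between two image vertices is the image of an edge of $\Theta_i$ (we only use edges of $P$), and that the label is read off the edge in $X$ itself, so it is independent of which copy the edge is viewed in. The formal Cayley graph may a priori have multiple edges, but $\gamma$ is a sequence of oriented edges of $X$, so this causes no problem.
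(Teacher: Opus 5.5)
Your proof is correct and is essentially the paper's argument. You pull the shared simple path back along $\iota_P$ and $\iota_Q$, apply hypothesis (iii) of \Cref{geo:hypotheses} to get $i=j$ and a common initial vertex $v$, cancel $\varphi_i(v)$ to obtain equal translating elements, and for the final assertion use an arc of a shortest cycle of length $\lceil\lambda\sigma_i\rceil<\sigma_i$, just as the paper does.
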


\begin{proof}
Write $P=P_{g,i}$ and $Q=P_{g',j}$, and let $\gamma$ be the shared simple
path, with a chosen orientation.  Its preimages $\iota_P^{-1}(\gamma)$ in
$\Theta_i$ and $\iota_Q^{-1}(\gamma)$ in $\Theta_j$ are simple paths with the
same label as $\gamma$.  The first has length at least $\lambda\sigma_i$.  By
the uniqueness property in \cref{geo:hypotheses}(iii), the two preimages
are the same path; in particular $i=j$, and both preimages start at the same
vertex $v$ of $\Theta_i$.  Then $g\varphi_i(v)$ and $g'\varphi_i(v)$ are both
the initial vertex of $\gamma$, so $g=g'$.  Hence $P=Q$ as indexed copies.

For the last assertion, note that every $\Theta_i$ has a simple path of
length at least $\lambda\sigma_i$, for example an initial segment of a
shortest cycle.  If two indexed copies have the same labeled subgraph of
$X$, they share such a path, so they are equal by the first part.
\end{proof}

Form a graph $\Xhat$ by adjoining, for each copy $P$, a new vertex $c_P$ and
an edge from $c_P$ to each vertex of $P$.  All edges, including the original
generator edges, have length one.  This is the \emph{coned-off graph} of
$X$ relative to the copies.  We refer to vertices of $X$ as group vertices
and to the new vertices as \emph{cone points}.  The graph $\Xhat$ is connected
and geodesic; it need not be locally finite.

\subsection{Minimal diagrams with prescribed boundary sides}
\label{sec:diagrams}

In this subsection, we prove the basic estimate on van Kampen diagrams
over the presentation of $\Gamma$.  A short loop in the coned-off graph can
become a long path in $X$ when each passage through a cone point is
replaced by a path in the corresponding copy.  We keep the endpoints of
these paths and the copies containing them fixed, but allow the paths themselves to
vary, and we choose a diagram with as few faces as possible.  We first
explain the terms used for such loops and diagrams: \emph{sides},
\emph{fixed} and \emph{adjustable} sides, and \emph{arcs}
(\Cref{geo:sides-figure}).  \Cref{geo:short-arcs} then shows that, if the
loop has $m$ sides, a minimal diagram has fewer than $m$ faces and at most
$6m$ arcs, and that every arc shared by two different copies is short.

\emph{Sides of a closed path.}
Let $v=(v_0,v_1,\ldots,v_N)$, with $v_N=v_0$, be a closed edge path in the
coned-off graph $\Xhat$; that is, $v_{t-1}$ and $v_t$ are joined by an edge
for $1\le t\le N$.  We assume that $v_0$ is a group vertex and that the
path is nonempty, that is, $N\ge1$; the path of length zero needs no
filling.  Every edge of
$\Xhat$ is either an edge of the Cayley graph $X$, which we call a
\emph{generator edge}, or an edge joining a vertex of a copy $P$ to its
cone point $c_P$.  Cone points are not adjacent to each other.  Hence, if
$v_t=c_P$, then $0<t<N$ and $v_{t-1},v_{t+1}\in P$; we call
$(v_{t-1},c_P,v_{t+1})$ a \emph{cone passage} through $c_P$.  Consequently
there are unique indices
\[
 0=t_0<t_1<\cdots<t_m=N
\]
such that each subpath $(v_{t_{j-1}},\ldots,v_{t_j})$ is either a single
generator edge (when $t_j-t_{j-1}=1$) or a cone passage (when
$t_j-t_{j-1}=2$).  We call these $m$ subpaths the \emph{sides} of $v$, and
the indices $t_0,\ldots,t_{m-1}$ its \emph{breakpoints}.  Thus a
breakpoint is an index, that is, an element of $\{0,\ldots,N-1\}$, and not
a vertex.  Different indices may give the same vertex, and an index $t$
with $v_t=v_{t_j}$ need not be a breakpoint.  The vertices $v_{t_j}$ are
group vertices, and there are at most $m$ of them.

\emph{Fixed and adjustable sides.}
For $1\le j\le m$ we define a path $\gamma_j$ in the Cayley graph $X$ as
follows.  If the $j$-th side of $v$ is a generator edge, let $\gamma_j$ be
this edge; we call $\gamma_j$ a \emph{fixed side}.  If the $j$-th side is a
cone passage $(x,c_P,y)$, let $\gamma_j$ be a path in the copy $P$ from $x$
to $y$; when $x=y$, the path of length zero is allowed.  We call such a
$\gamma_j$ an \emph{adjustable side}, and $P$ its \emph{assigned copy}.
The concatenation $\gamma=\gamma_1\gamma_2\cdots\gamma_m$ is a closed path
in $X$.  Writing $\abs{\gamma_i}$ for the length of $\gamma_i$, the indices
\[
 s_j=\abs{\gamma_1}+\cdots+\abs{\gamma_j}\qquad(0\le j\le m-1)
\]
are the \emph{breakpoints} of $\gamma$; the vertex of $\gamma$ at index
$s_j$ is the vertex $v_{t_j}$.  The fixed sides are determined by $v$.
The adjustable sides are not: for a cone passage $(x,c_P,y)$, any path in
$P$ from $x$ to $y$ may be used.  In the minimization below, the
adjustable sides range over all such paths, while the fixed sides are
always the given generator edges.

\begin{figure}[htbp]
\centering
\begin{tikzpicture}[scale=0.62,every node/.style={font=\footnotesize}]
 \begin{scope}
  \coordinate (a0) at (0,0); \coordinate (a1) at (2.4,0);
  \coordinate (a2) at (2.4,2.4); \coordinate (a3) at (0,2.4);
  \coordinate (cP) at (3.9,1.2); \coordinate (cQ) at (-1.5,1.2);
  \draw[very thick] (a0)--(a1);
  \draw[very thick] (a2)--(a3);
  \draw[dashed] (a1)--(cP)--(a2);
  \draw[dashed] (a3)--(cQ)--(a0);
  \foreach \p in {a0,a1,a2,a3} \fill (\p) circle (2.4pt);
  \fill (cP) ++(-3pt,-3pt) rectangle ++(6pt,6pt);
  \fill (cQ) ++(-3pt,-3pt) rectangle ++(6pt,6pt);
  \node[right] at (cP) {$c_P$};
  \node[left] at (cQ) {$c_Q$};
  \node[below] at (1.2,0) {generator edge};
  \node[above] at (1.2,2.4) {generator edge};
  \node at (1.2,-1.4) {(a) a loop in $\Xhat$};
 \end{scope}
 \begin{scope}[xshift=7.6cm]
  \coordinate (b0) at (0,0); \coordinate (b1) at (2.4,0);
  \coordinate (b2) at (2.4,2.4); \coordinate (b3) at (0,2.4);
  \draw[very thick] (b0)--(b1);
  \draw[very thick] (b2)--(b3);
  \draw[very thick,blue] (b1) .. controls (3.5,0.5) and (1.7,1.7) .. (b2);
  \draw[very thick,blue] (b3) .. controls (-1.1,1.9) and (0.8,0.7) .. (b0);
  \foreach \p in {b0,b1,b2,b3} \fill (\p) circle (2.4pt);
  \node[blue,right] at (3.0,1.2) {in $P$};
  \node[blue,left] at (-0.6,1.2) {in $Q$};
  \node[below] at (1.2,0) {fixed};
  \node[above] at (1.2,2.4) {fixed};
  \node at (1.2,-1.4) {(b) the loop $\gamma$ in $X$};
 \end{scope}
 \begin{scope}[xshift=14.6cm]
  \coordinate (c0) at (0,0); \coordinate (c1) at (2.6,0);
  \coordinate (c2) at (2.6,2.4); \coordinate (c3) at (0,2.4);
  \draw[very thick] (c0)--(c1)--(c2)--(c3)--cycle;
  \draw[very thick,red] (1.3,0) .. controls (0.9,0.8) and (1.7,1.6) .. (1.3,2.4);
  \foreach \p in {c0,c1,c2,c3} \fill (\p) circle (2.4pt);
  \fill (1.3,0) circle (2.4pt); \fill (1.3,2.4) circle (2.4pt);
  \node at (0.55,1.2) {$F_1$};
  \node at (2.05,1.2) {$F_2$};
  \node[red,above] at (1.3,2.4) {arc};
  \node at (1.3,-1.4) {(c) a diagram};
 \end{scope}
\end{tikzpicture}
\caption{Sides and arcs.  (a) A closed path in the coned-off graph with
four sides: two generator edges and two cone passages, through the cone
points $c_P$ and $c_Q$.  The four marked dots are the vertices at the
breakpoints.  (b) The corresponding
closed path $\gamma$ in $X$: the generator edges are kept as fixed sides,
and each cone passage is replaced by a path in the corresponding copy
(blue); these paths are the adjustable sides.
(c) A van Kampen diagram filling a closed path, with two faces $F_1$ and
$F_2$, each bounded by a closed path in a copy.  The red path separating
the faces is an arc: a maximal path whose interior vertices have degree
two and are unmarked.  Its label occurs in the copies assigned to both
faces, so it is a piece in the classical sense when these two copies are
distinct.}
\label{geo:sides-figure}
\end{figure}

\emph{Diagrams.}
An \emph{admissible diagram} for these data is a van Kampen diagram for
the closed path in $X$ just described.  More precisely, it is a finite
connected plane multigraph, mapped edge by edge to $X$, whose outer
boundary is the chosen closed path with its division into sides, and in
which the boundary of each bounded face is a closed path in some copy,
called the copy \emph{assigned} to that face.  The boundary path of a
face, or of the outer region, is also called its \emph{contour}.  We
write the outer contour as a closed path $(u_0,u_1,\ldots,u_M)$ in the
diagram, mapped to $\gamma$; its breakpoints are the indices $s_j$ of
$\gamma$.  The vertices $u_{s_0},\ldots,u_{s_{m-1}}$ are called
\emph{marked vertices}.  As for paths, an index $t$ for which $u_t$ is a
marked vertex need not be a breakpoint.  Such diagrams exist by the
van Kampen lemma for the presentation of $\Gamma$.  The contour of each
bounded face is labeled by the label of a closed path in some graph
$\Theta_i$, starting at a vertex $v$; if the contour starts at a vertex of
the diagram that is mapped to $h\in\Gamma$, then the copy $P_{g,i}$ with
$g\varphi_i(v)=h$ contains the image of the contour, and we take it as the
copy assigned to the face.  We allow boundary
paths of faces to repeat edges and vertices, so that ordinary planar
edge deletions are possible without assuming that face boundaries are
embedded circles.  An empty diagram is represented by one marked vertex.

\emph{Arcs.}
We now choose the adjustable sides $\gamma_j$ and an admissible diagram
so as to minimize first the number $F$ of bounded faces, and then the
number of edges.  Only the adjustable sides and the diagram are chosen
here; the fixed sides are the given generator edges.  A diagram attaining
this minimum is called \emph{minimal}.
We simplify the diagram by \emph{suppressing} every unmarked vertex of
degree two, that is, we merge the
two edges at such a vertex into a single longer edge.  The edges of the
resulting graph are called \emph{arcs}.  Equivalently, an arc is a
maximal path in the diagram whose interior vertices have degree two and
are unmarked; its length is the number of edges of $X$ that it contains.
Every arc has two incidences with region boundaries, possibly with the
same region.  An arc separating faces
assigned to two different copies $P$ and $Q$ has a label that occurs on
paths in both copies; in classical small-cancellation terminology, such a
label is a \emph{piece}.  We write $b$ for the number of arcs.  An
\emph{incidence} is one occurrence of an edge or an arc on the boundary
of a region.  Thus an arc has two incidences, and a bridge, that is, an
edge whose removal disconnects the diagram, has two incidences on the
boundary of the same region.

\begin{lemma}
\label{geo:short-arcs}
In a minimal diagram for a nonempty closed path as above, so that $m\ge1$,
the following assertions hold.
\begin{enumerate}[label=(\roman*)]
\item The contour of a bounded face assigned to $P$ has length at least $\sigma(P)$,
      and every arc incident to it has length less than
      $\lambda\sigma(P)$.
\item An arc whose two incidences lie on regions with different assigned
      copies has length less than $\lambda\sigma(Q)$ for either of these
      copies $Q$.
      An arc incident to a fixed side has length one.
\item One has
\begin{equation}
 \left(\lambda^{-1}-6\right)F\leq 6m-6,
 \qquad b\leq6m.
 \label{geo:diagram-count}
\end{equation}
\end{enumerate}
\end{lemma}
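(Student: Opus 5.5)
Throughout, the plan uses minimality twice: first to rule out long arcs by local surgeries that remove faces or edges, and then to count faces and arcs with Euler's formula for the planar graph of arcs.

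\emph{Part (i).} The contour of a face assigned to $P$ is the image of a closed path in $\Theta_i$. If it were shorter than $\sigma(P)$, the closed path would be nullhomotopic in the graph $\Theta_i$, hence a tree-like closed path. I would then delete the face and fold its boundary: identify backtracking edges, which is legitimate because face contours may repeat edges. This lowers $F$ and contradicts minimality. For the arc bound, suppose an arc $\alpha$ on the contour of a face $D$ (copy $P$) has length at least $\lambda\sigma(P)$, and split by what lies on the other side of $\alpha$.

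If the other side is another face $D'$ with copy $Q$, then a simple initial segment of $\alpha$ has length at least $\lambda\sigma(P)$ and lies in both $P$ and $Q$. By \Cref{geo:copies}, $P=Q$. The two faces then combine into one face whose contour is a closed path in $P$, which lowers $F$.

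If the other side is the outer region, $\alpha$ runs along $\gamma$.
\begin{itemize}
\item If $\alpha$ lies in a fixed side, it has length one, since fixed sides have length one and breakpoints are marked. This already gives the second claim of (ii) when the other region is a face.
\item Otherwise $\alpha$ lies inside one adjustable side, because marked vertices end arcs. Using \Cref{geo:copies} again, the assigned copy of that side is $P$. We can then reroute the adjustable side through $P$ along the rest of $D$'s contour and delete $D$, which lowers $F$.
\end{itemize}

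If both sides of $\alpha$ are $D$, then $\alpha$ is a bridge. Removing it, or folding, reduces the edge count, again contradicting minimality.

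\emph{Part (ii).} This is the same argument with the roles of the two copies exchanged, so the bound holds for the minimum of the two girths. The main obstacle is making the surgeries rigorous when contours are not embedded circles. I would handle it by cutting along the arc and regluing, so that the face count strictly drops and admissibility is preserved.

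\emph{Part (iii).} I would run Euler's formula on the arc graph. After suppression, every unmarked vertex has degree at least $3$, and there are at most $m$ marked vertices. Then
\[
 V-b+(F+1)=2,\qquad 2b\ge 3(V-m),
\]
which gives $b\le 3F+3m-3$.

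Now count incidences on bounded faces. By (i), a face assigned to $P$ has contour length at least $\sigma(P)$, while each of its arcs has length less than $\lambda\sigma(P)$. So each face carries more than $\lambda^{-1}$ arc incidences, and the total number of incidences is at most $2b$. This yields
\[
 \lambda^{-1}F< 2b\le 6F+6m-6,
\]
which is $(\lambda^{-1}-6)F\le 6m-6$. Finally, $\lambda\le1/24$ gives $F\le(6m-6)/18<m$, and substituting back gives $b\le 3F+3m-3\le6m$. The edge case $F=0$ is a tree, where $b\le m-1$ directly.
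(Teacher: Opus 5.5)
Your plan follows the paper's proof closely: minimality surgeries, \Cref{geo:copies} for shortness, then the same Euler count $b\le 3F+3m-3$ and $F/\lambda\le 2b$. However, it relies on two reductions that you never establish.

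\textbf{Arcs must be immersed.} In each case of the arc bound you take ``a simple initial segment of $\alpha$ of length at least $\lambda\sigma(P)$''. This requires the image of $\alpha$ in $X$ to be an immersed path, and nothing in your argument gives that. Two consecutive edges of $\alpha$, meeting at an unmarked degree-two vertex, may map to a single edge of $X$ traversed forth and back. The image of $\alpha$ in $P$ then need not contain any long simple path, so \Cref{geo:copies} cannot be invoked. Your remark about cutting and regluing addresses non-embedded contours, not this.

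The paper supplies the missing step by a separate use of minimality. Take an arc of length greater than two, and suppose its label contains a cancelling pair $ss^{-1}$ at consecutive vertices $x,y,z$. Then $x\ne z$ as diagram vertices and $x,z$ have the same image in $X$. Shrinking the two-edge path to a point keeps every face contour and every side a path in its assigned copy, preserves $F$, and lowers the edge count. Hence arcs are immersed, and an immersed path of length $\lceil\lambda\sigma(Q)\rceil<\sigma(Q)$ in $Q$ is simple. Arcs of length at most two are short anyway, since $\lambda\sigma>3$. Once this is in place, (ii) follows from \Cref{geo:copies} directly, with no surgery, whenever the two regions have different copies. This includes an arc lying on two adjustable sides of the outer contour, which is the case \Cref{geo:avoidance} uses and which your case split does not list.

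\textbf{No unmarked leaves.} Your count in (iii) uses that every unmarked vertex has degree at least three after suppression, that is, that a minimal diagram has no unmarked leaves. Your bridge argument only covers arcs with both sides on one bounded face. A spur protruding into the outer region is not excluded. The paper handles it as follows. Such a leaf gives $u_{t-1}=u_{t+1}$ at an index $t$ that is not a breakpoint, so the backtrack lies inside a single side. That side cannot be fixed, since fixed sides have length one. So it is adjustable, and deleting the backtrack shortens it with the same endpoints and copy.

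With these two reductions added, your argument is complete. Separately, your remark that $b\le m-1$ when $F=0$ is false in general: a tripod whose three leaves are the marked vertices has $b=m=3$. The remark is also unnecessary. If you write $F/\lambda\le 2b$ non-strictly, the bound $b\le 3F+3m-3$ already covers $F=0$.
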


\begin{proof}
We first give the diagram reductions which will be used in counting arcs.

\emph{Removing a freely trivial face.}
A word is \emph{freely trivial} if it can be reduced to the empty word by
repeatedly deleting two consecutive letters of the form $ss^{-1}$; it is
\emph{freely reduced} if it contains no two consecutive letters of this
form.  The label of the contour of a bounded face cannot be freely trivial.
Indeed, by the van Kampen lemma, the label of the outer contour is freely
equal to a product of conjugates of the labels of the contours of the
bounded faces, each face used once.  A freely trivial factor can be
omitted.  Applying the van Kampen lemma to
the remaining product gives a diagram with fewer bounded faces and the
same boundary word.  With its initial group vertex fixed, the boundary
path in $X$ is determined by that word, so its sides and their assigned
copies are preserved.  This contradicts minimality of $F$.  The label of a closed
path of length less than the girth of a graph is freely trivial, which
proves the lower bound on the lengths of face contours.

\emph{Merging equal assignments.}
Two bounded faces assigned to the same copy cannot share an edge.
Deleting such an edge joins their regions and decreases $F$ by one; the
new contour still runs entirely in that copy.  The edge is not a bridge,
since its incident faces are different.  Further contacts between the
regions merely produce repeated contour vertices or bridges, which are
allowed.  Likewise, a bounded face cannot share an edge with an adjustable
side with the same assigned copy.  Delete the edge and replace its
occurrence in that side by the rest of the face contour.  The side keeps
its endpoints, its position in the outer contour, and its assigned copy,
while $F$ decreases.

\emph{Removing leaves and internal bridges.}
A \emph{leaf} is a vertex of degree one.  Recall that a \emph{bridge} is an
edge whose removal disconnects the diagram.  We call a bridge
\emph{internal} if both of its incidences lie on the contour of a bounded
face.
A \emph{backtrack} is a two-edge path that traverses an edge and then
immediately returns along the same edge in the opposite direction.
There is no unmarked vertex of degree one.  Suppose that $\ell$ is such a
vertex, and let $e$ be the unique edge at $\ell$.  If $e$ lies on the
contour of a bounded face, that contour traverses $e$ and then
immediately $e^{-1}$; deleting $\ell$ and $e$ removes this backtrack and
leaves a closed path in the assigned copy.  Otherwise $e$ lies on the
outer contour, so that $u_t=\ell$ and $u_{t-1}=u_{t+1}$ for some index $t$.
Since $\ell$ is unmarked, $t$ is not a breakpoint, that is, an index
separating two consecutive prescribed sides of the outer contour.
Hence $s_{j-1}<t<s_j$
for some $j$, and the indices $t-1,t,t+1$ all belong to the side
$\gamma_j$.  This side contains at least two edges, so it is not a fixed
side; it is an adjustable side, and deleting the backtrack replaces it by
a shorter path in the same copy with the same endpoints.  In either case
the number of edges decreases and $F$ does not increase, contrary to
minimality.

Nor can a bridge have both incidences on a bounded face.  Remove the open
bridge.  The outer contour belongs to one of the two remaining components;
keep this component and discard the other one.  On the affected face
contour, this removes a subpath of the form $e\gamma e^{-1}$ and joins the
parts on either side.  The remaining contour is still a closed path in its
assigned copy, whether or not $\gamma$ is freely trivial.  The prescribed
outer contour is unchanged, the number of bounded faces cannot increase,
and the edge count decreases.  This is again impossible.

\emph{Shortness of arcs.}
Each incidence of an arc lies either on the contour of a single bounded
face or on the outer contour.  In the second case the arc lies in a single
side $\gamma_j$: the interior vertices of an arc are unmarked, so no
breakpoint occurs in its interior.  In particular, an arc on a fixed side
has length one.  By the preceding reductions, if one incidence of an arc
lies on a bounded face, then the other incidence lies on a bounded face
or an adjustable side with a different assigned copy, or on a fixed
side.

Suppose an arc with different assignments has length greater than two.
Its label, read along the arc, is freely reduced.  To see this, note
that two consecutive letters $ss^{-1}$ would correspond to two consecutive
edges of the arc whose common vertex is unmarked and has degree two.  Its outer endpoints are
distinct diagram vertices: an arc has no repeated internal vertices, and
even an arc whose two endpoints coincide closes up only at its ends; an arc of length two was
excluded.  Contract this two-edge tree, identifying the outer endpoints,
which have the same image in $X$.  The two steps cancel on each incident
contour, so all assignments and sides remain valid.  The
contraction preserves $F$ and decreases the edge count, a contradiction.

If this arc had length at least $\lambda\sigma(Q)$ for an incident copy
$Q$, its first $\lceil\lambda\sigma(Q)\rceil$ edges would be an
immersed path shorter than $\sigma(Q)$, hence a simple path in $Q$.  Its
vertices are distinct in $X$, so the same edges also form a simple path in
the other copy incident to the arc.  These two different copies would
then share a simple path of length at least $\lambda\sigma(Q)$,
contradicting \cref{geo:copies}.  Arcs of length at most two are
already shorter than either threshold, since $\lambda\sigma>3$.
This proves (i) and (ii).

\emph{Counting.}
Let $v$ be the number of vertices after suppression.  Since the diagram
is connected and has a marked vertex, the graph obtained after
suppression is not a circle all of whose vertices have been suppressed.  All unmarked vertices that remain after suppression have
degree at least three.  Euler's formula and this degree bound give
\[
 v-b+F=1,\qquad 2b\geq3(v-m),\qquad b\leq3F+3m-3.
\]
Each bounded face has at least $\lambda^{-1}$ arc incidences by (i).
There are at most $2b$ bounded-face incidences in all, and therefore
\[
 F/\lambda\leq2b\leq6F+6m-6.
\]
This proves the first assertion of \eqref{geo:diagram-count}.  Since
$\lambda\leq1/24<1/12$, it also gives $F\leq m-1$, and then $b\leq6m$.
The argument includes trees and the single-vertex diagram.
\end{proof}

\subsection{Distance estimates and hyperbolicity}

In this subsection, we draw two consequences of the diagram estimate.
First, if two vertices of a copy $P$ are joined by a path of length $L$
in the coned-off graph that avoids the cone point of $P$, then their
distance in $P$ is at most $12(L+1)\lambda\sigma(P)$
(\cref{geo:avoidance}); here $\lambda\sigma(P)$ is the upper bound for
arcs shared by $P$ and another copy.  Second, the coned-off graph is
hyperbolic, with a thin-triangle constant $D_0$ that does not depend on
the relator graphs (\cref{geo:hyperbolicity}).

\begin{proposition}
\label{geo:avoidance}
Let $P$ be a copy and let $c_P$ be its cone point, namely the new vertex
of $\Xhat$ joined by an edge of length one to every vertex of $P$.
If an edge path of length $L$ in $\Xhat$ joins vertices $x,a\in P$ and does
not visit $c_P$, then
\begin{equation}
 d_P(x,a)\leq12(L+1)\lambda\sigma(P).
 \label{geo:avoidance-bound}
\end{equation}
Moreover, the formal Cayley graph $X$ is simple.
\end{proposition}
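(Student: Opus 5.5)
We close the given path into a loop in $\Xhat$ and fill it with a minimal diagram as in \Cref{geo:short-arcs}. Let $\beta$ be the path of length $L$ from $x$ to $a$ avoiding $c_P$, and let $v$ be the closed path obtained by following $\beta$ from $x$ to $a$ and returning from $a$ to $x$ through the cone passage $(a,c_P,x)$. If $L=0$, then $x=a$ and there is nothing to prove, so assume $L\ge1$. The path $v$ has at most $L+1$ sides. The reason is that each side of $\beta$ is a generator edge or a cone passage, the latter consuming two edges, and the return adds one further side. So $m\le L+1$. The distinguished side is the cone passage through $c_P$; its adjustable side $\gamma_m$ is a path in $P$ from $a$ to $x$. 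Since $\beta$ does not visit $c_P$, no other adjustable side has assigned copy $P$. This holds because distinct copies are distinct cone points, and by \cref{geo:copies} the assignment of copies is well defined.

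Choose a minimal diagram and write $\gamma_m$ as a concatenation of arcs on the outer contour; interior marked vertices would have to be breakpoints, so the arcs of $\gamma_m$ are exactly maximal subpaths between degree-$\ge3$ vertices. By \eqref{geo:diagram-count} there are at most $b\le 6m$ arcs in total. Each arc of $\gamma_m$ has its other incidence on one of the following:
\begin{enumerate}
\item a bounded face whose assigned copy is different from $P$, since the merging reduction in the proof of \Cref{geo:short-arcs} excludes faces assigned to $P$;
\item an adjustable side with a different copy;
\item a fixed side;
\item $\gamma_m$ itself, for a bridge on the outer contour.
\end{enumerate}
In the first two cases \Cref{geo:short-arcs}(ii) gives length less than $\lambda\sigma(P)$, and in the third the length is one. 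The last case needs care: a bridge lying entirely in $\gamma_m$ with both incidences on $\gamma_m$ would be a backtrack. We would reduce it by shortening the adjustable side, contradicting minimality, unless the arc separates the diagram. I expect to argue via the leaf and bridge reductions that such outer bridges are also shortenable. Summing, $|\gamma_m|\le 6m\cdot\lambda\sigma(P)$ (using $\lambda\sigma(P)>3\ge1$), and hence
\[
 d_P(x,a)\le|\gamma_m|\le 6(L+1)\lambda\sigma(P)\le 12(L+1)\lambda\sigma(P).
\]
The factor $12$ leaves slack for arcs counted twice or for a weaker count of sides.

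For simplicity of $X$, we handle loops and multiple edges separately. A loop means $s=1$ in $\Gamma$ for a letter $s$; a double edge means $s=t$ with $s\ne t$, that is, $st^{-1}=1$. Take any copy $P$ through the vertex $1$ containing the edge labeled $s$; it exists since labels at vertices exhaust $d$ letters, though we may need a letter appearing in some $\Theta_i$, which we assume. For the double edge, the $t$-edge from $1$ gives a path of length one from $1$ to $s$ avoiding $c_P$. Alternatively we apply the diagram argument directly to the closed path $st^{-1}$ with two fixed sides: a minimal diagram has $F\le m-1=1$ faces and arcs of length one, so a face would have contour length at most $2<\sigma$, contradicting (i). Therefore $F=0$ and the word is freely trivial, which forces $s=t$ as letters. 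The loop case, the word $s$ with $m=1$, gives $F=0$ and a nontrivial word that is not freely trivial, which is a contradiction.

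The main obstacle is the bookkeeping in the first part: showing that arcs on $\gamma_m$ cannot be long through self-incidence with $\gamma_m$ and cannot border faces assigned to $P$ itself. This relies on rerunning the minimality reductions carefully.
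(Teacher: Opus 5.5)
\textbf{The gap is your case (4), which you leave open, and it is exactly what this proposition has to add to \cref{geo:short-arcs}.} An arc with both incidences on $\gamma_m$ has both incidences on the same side, with the same assigned copy $P$. So \cref{geo:short-arcs}(ii) gives no bound on its length. Part (i) does not apply either, since neither incidence is on a bounded face. Unless such arcs are excluded, $\abs{\gamma_m}$ is not controlled at all.

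Your description of this case is also inaccurate:
\begin{itemize}
\item Such an arc is a bridge, traversed by the outer contour at two indices $t<t'$ inside $\gamma_m$. It is a backtrack only when the component beyond it is a single vertex.
\item It always separates the diagram. So ``unless the arc separates the diagram'' is the general case, not an exception.
\item The existing reductions do not cover it. The leaf reduction helps only if the far component is a tree. The bridge reduction concerns bridges with both incidences on a bounded face. Here the far component may contain bounded faces.
\end{itemize}

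\textbf{The missing argument is an excision.} The vertices beyond the bridge are reached by the outer contour only at indices strictly between $t$ and $t'$. No breakpoint lies there, so none of these vertices is marked. Delete the bridge together with the whole component beyond it. This removes a closed subpath from $\gamma_m$, leaving a path in $P$ from $a$ to $x$. No other side changes, $F$ does not increase, and the number of edges strictly decreases. This contradicts minimality.

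With this in place your count is legitimate. Each arc then has at most one traversal on $\gamma_m$, so $\abs{\gamma_m}\le b\lambda\sigma(P)\le 6(L+1)\lambda\sigma(P)$. This is sharper than the paper's bound, which counts all $2b$ incidences.

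The simplicity part via the closed path $st^{-1}$ with two fixed sides is fine. The first inequality in \eqref{geo:diagram-count} already forces $F=0$ when $m\le 2$. Your first attempt, through a copy containing the $s$-edge, is unnecessary and rests on an unverified assumption, so drop it.
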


\begin{proof}
Let $(v_0,\ldots,v_L)$ be the given path, with $v_0=x$ and $v_L=a$, and
close it by the cone passage $(a,c_P,x)$.  The resulting closed path in
$\Xhat$ has at most $L+1$ sides, and this cone passage is its only side
through $c_P$.  Let $\gamma_*$ denote the corresponding adjustable side, a
path in $P$ from $a$ to $x$, and apply \cref{geo:short-arcs} to a minimal
diagram.

Consider an arc with an incidence on $\gamma_*$.  We claim that its other
incidence does not lie on $\gamma_*$.  If it did, the arc would be a
bridge---that is, deleting any one of its edges would disconnect the
diagram---and would be traversed by the outer contour at two indices
$t<t'$, both belonging to $\gamma_*$.  The vertices of the diagram that lie beyond the
bridge are then reached by the outer contour only at indices strictly
between $t$ and $t'$.  Since no breakpoint lies strictly between $t$ and
$t'$, none of these vertices is marked.  Deleting the bridge and the
component of the diagram beyond it removes the subpath
$(u_{t+1},\ldots,u_{t'})$ from $\gamma_*$ and changes no other side.  The
result is an admissible diagram for a shorter adjustable side in $P$ with
the same endpoints, with no more faces and fewer edges.  This contradicts
minimality.

Hence the other incidence of every arc on $\gamma_*$ lies on a bounded
face or on another side.  No other side has assigned copy $P$, and by the
proof of \cref{geo:short-arcs} no bounded face sharing an arc with
$\gamma_*$ is assigned to $P$.  By \cref{geo:short-arcs}(ii), every arc on
$\gamma_*$ therefore has length less than $\lambda\sigma(P)$, or length one
if it lies on a fixed side.  There are at most $2b$ arc incidences in
total, so
\[
 d_P(x,a)\le\abs{\gamma_*}\le2b\lambda\sigma(P)\le12(L+1)\lambda\sigma(P)
\]
by \eqref{geo:diagram-count}.  This proves \eqref{geo:avoidance-bound}.

To prove that $X$ is simple, consider a closed path in $X$ consisting of
at most two generator edges.  The first inequality in \eqref{geo:diagram-count}, with
$\lambda\leq1/24$, forces $F=0$.  Its minimal diagram is a tree, whose
outer word is freely trivial.  There are consequently no identity
generators, unintended coincidences of generator edges, or generator
involutions.  These are exactly the possible loops and multiple edges in
the formal undirected Cayley graph.
\end{proof}

We use the following standard criterion for hyperbolicity, a form of the
\emph{linear isoperimetric inequality}; see
\cite[III.H.2.1 and III.H.2.9]{BH}.  Let $Y$ be a connected graph whose
edges have length one.  Suppose that every closed path of length $n$ in
$Y$ is the outer contour of a van Kampen diagram mapped to $Y$ with at most
$Cn$ faces, each of whose contours has length at most $M$.  Then the
geodesic triangles of $Y$ are $\delta$-thin for a constant $\delta$
depending only on $C$ and $M$; that is, every point of a side of a
geodesic triangle lies within distance $\delta$ of the union of the other
two sides.  Neither local finiteness of $Y$ nor a finite list of
relations is required.

\begin{proposition}
\label{geo:hyperbolicity}
There is an integer $D_0$ independent of $d$, $S$, $\lambda$, and the
relator graphs, such that every vertex on a geodesic triangle in $\Xhat$
is at distance at most $D_0$ from a vertex on one of its other two sides.
\end{proposition}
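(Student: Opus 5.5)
We will verify the linear isoperimetric criterion quoted just before the statement, applied to $Y=\Xhat$, with constants $C$ and $M$ that are absolute numbers. The only input is \cref{geo:short-arcs}. The key observation is that a closed path of length $n$ in $\Xhat$ has at most $n$ sides. By \eqref{geo:diagram-count}, a minimal diagram for its associated closed path in $X$ has fewer than $n$ faces and at most $6n$ arcs. The faces of that diagram are long cycles in copies, which is useless for a bounded-$M$ criterion. So we re-fill the same loop in $\Xhat$ using cone points, producing a diagram in $\Xhat$ whose faces have uniformly bounded length.

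\emph{Construction of the coned diagram.} Take a closed path $v$ of length $n$ in $\Xhat$. We may assume $v_0$ is a group vertex, since otherwise we rotate. Form the sides, choose a minimal admissible diagram $D$ as in \cref{geo:short-arcs}, and suppress degree-two unmarked vertices to get the arc graph. The vertices are now the marked vertices and the branch vertices, so there are at most $b+1\le6n+1$ of them. Build a new planar complex $D'$ as follows.
\begin{enumerate}[label=(\alph*)]
\item For each bounded face of $D$ with assigned copy $P$, insert a vertex mapped to $c_P$ and join it to each arc-vertex on the contour of that face. This subdivides the face into triangles, each bounded by the two cone edges and one arc. Every arc lies in $P$.
\item Treat each adjustable side $\gamma_j$ with copy $P$ in the same way, using the outer region. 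Take the original cone passage $(x,c_P,y)$ of $v$ and join $c_P$ to every arc-vertex along $\gamma_j$. This fans the region between $\gamma_j$ and the cone passage into triangles.
\item Replace each arc $\alpha$, which lies in a copy $P$, by a path of length two through $c_P$.
\end{enumerate}
After step (c), two cone edges together with one arc collapse to a loop of length at most $4$: two cone edges to $c_P$, and then the two-edge passage replacing the arc. When the two cone points coincide, the face degenerates. Fixed sides are single generator edges and need no replacement. The number of faces of $D'$ is at most the number of arc incidences plus $n$, hence at most $12n+n$, and every contour has length at most a small constant such as $M=6$.

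\emph{Checks.} We must verify that $D'$ really is a van Kampen diagram mapped to $\Xhat$ with the prescribed outer contour $v$. The outer boundary consists of the fixed generator edges together with the original cone passages, and this is exactly $v$. Each face maps to a closed path in $\Xhat$. Arcs adjacent to a fixed side have length one by \cref{geo:short-arcs}(ii), so no coning is needed there.

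\emph{Main obstacle.} The hardest part is the planarity bookkeeping when face contours are not embedded circles, since they may contain repeated vertices or bridges, and when diagrams are degenerate, such as trees or a single vertex. In the degenerate cases, $F=0$ and every arc is bounded by the side structure, which handles them. For non-embedded contours, we would cone each contour as a closed path, which is allowed in van Kampen diagrams over $\Xhat$ because these permit singular discs. With $C=13$ and $M=6$, the criterion of \cite{BH} then yields $\delta$, and hence $D_0$. Neither constant depends on $d$, $S$, $\lambda$, or the relator graphs.
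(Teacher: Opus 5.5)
Your proposal is correct and is essentially the paper's argument: the paper also verifies the linear filling criterion using a minimal diagram from \cref{geo:short-arcs}. It replaces each arc not on a fixed side by a two-edge cone passage, and trades each face contour and each adjustable side for at most one closed path of length at most four per arc incidence, which gives $C=12$, $M=4$ from $2b\le12n$. The difference is only presentational: you cone the faces and fan the adjustable sides directly in the plane, whereas the paper rewrites the boundary word through the van Kampen lemma, noting that each coned contour freely reduces to the trivial path. That route avoids the planarity bookkeeping for non-embedded contours that you flagged, and your looser constants $C=13$, $M=6$ are harmless.
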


\begin{proof}
We verify the preceding criterion with $C=12$ and $M=4$.  Let $\alpha$ be
a closed edge path of length $n>0$ in $\Xhat$; after a cyclic permutation
we may assume that it starts at a group vertex.  It has $m\le n$ sides.
Choose adjustable sides and a minimal diagram as above, and pass to arcs;
then $b\le6m$ by \cref{geo:short-arcs}.

For each arc $\beta$ we choose a path $\beta'$ in $\Xhat$ joining the images
of its endpoints, choosing the reverse path for the reversed arc.  If
$\beta$ lies on a fixed side, let $\beta'=\beta$, a single generator edge.
Otherwise one of the two regions containing $\beta$ is a bounded face or an
adjustable side with an assigned copy $P$, and we let $\beta'$ be the cone
passage through $c_P$ between the two endpoints.  On the contour of a
bounded face assigned to $P$, we then replace each path $\beta'$ by the
cone passage through $c_P$ with the same endpoints.  Each replacement
changes the path across a closed path of length at most four.  After all
replacements, every segment of the contour passes through $c_P$, so the
contour freely reduces to the trivial path.  Similarly, on an adjustable
side with assigned copy $P$, the replacements produce a path that freely
reduces to the original cone passage through $c_P$; if the adjustable side
has length zero, we insert the backtrack $(x,c_P,x)$.  Fixed sides are not
modified.  The number of closed paths of length at most four used is at
most the number of arc incidences on bounded faces and on adjustable
sides, hence at most $2b\le12n$.

These replacements also give a van Kampen diagram for $\alpha$ when the
original contours repeat vertices or edges.  By the van Kampen lemma, the
outer contour is freely equal to a product of conjugates of the contours
of the bounded faces.  Replace each of these contours, and each adjustable
side, by the products of closed paths of length at most four constructed
above.  The van Kampen construction applied to the resulting product gives
a diagram for $\alpha$ in $\Xhat$ with at most the stated number of faces,
each of contour length at most four.

The linear filling criterion now gives a hyperbolicity constant depending
only on $12$ and $4$.  Increasing it by one and taking an integer replaces
nearby points of edges by vertices and gives the asserted $D_0$.
\end{proof}

\subsection{Controlling intersections between copies}

In this subsection, we prove the main geometric result of the section.
We fix a root $R$, which orders the cone points by their distance from
$R$.  For a copy $P$ we choose a geodesic from its cone point toward $R$
and let $a$ be its first vertex in $P$.  We show that all intersections of
$P$ with copies whose cone points are no farther from $R$ lie in one ball
of $P$ centered at $a$ (\cref{geo:overlap}), and that this ball is a tree
with fewer than $\eta\abs P$ vertices (\cref{geo:overlap-tree}).

Fix the universal integers
\begin{equation}
 t=D_0+3,\qquad L_0=2+2t+2D_0.
 \label{geo:universal-constants}
\end{equation}

\begin{lemma}
\label{geo:overlap}
Let $R$ be any group vertex or cone point of $\Xhat$.  Let $P$ be a copy with
$c_P\neq R$, and choose one geodesic from $c_P$ to $R$.  Write $a\in P$
for its first group vertex.  Then every intersection $P\cap Q$ with a
different copy $Q$ satisfying
\[
 d_{\Xhat}(c_Q,R)\leq d_{\Xhat}(c_P,R)
\]
is contained in the same intrinsic ball
\begin{equation}
 B_P=B_P(a,r_P),\qquad
 r_P=\left\lfloor12(L_0+1)\lambda\sigma(P)\right\rfloor.
 \label{geo:overlap-radius}
\end{equation}
Here the ball means the subgraph of $P$ induced on its vertices.  Its
center $a$ is independent of both $Q$ and the chosen point of
intersection.  We call $B_P$ the \emph{overlap ball} of $P$ (with respect
to $R$).
\end{lemma}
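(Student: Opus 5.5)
The plan is to reduce the lemma to \cref{geo:avoidance}. For each intersection vertex $x\in P\cap Q$, we want to build a path in $\Xhat$ from $x$ to $a$ that avoids $c_P$ and has length at most $L_0$. Then \eqref{geo:avoidance-bound} gives $d_P(x,a)\le12(L_0+1)\lambda\sigma(P)$. Since $d_P$ takes integer values, this yields $x\in B_P(a,r_P)$. The center $a$ depends only on $P$ and the chosen geodesic, so the same ball serves all such $Q$ and $x$.

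Set $n=d_{\Xhat}(c_P,R)\ge1$. Let $\beta$ be the chosen geodesic from $c_P$ to $R$, so that $\beta$ passes through $a$ at step $1$. Since $x\in Q$, we have $d(c_Q,x)=1$. Let $\beta_Q$ be a geodesic from $c_Q$ to $R$, of length at most $n$. Consider the geodesic triangle (or quadrilateral) with vertices $c_P$, $x$, $R$: its sides are the edge $c_P x$ and a geodesic $\alpha$ from $x$ to $R$ of length at most $n+1$, obtained by following $x\to c_Q\to\beta_Q$ and then shortening if needed. The comparison between $d(x,R)\le n+1$ and $d(c_P,R)=n$ shows that the two geodesics $\beta$ and $\alpha$, which start at adjacent points, fellow-travel. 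By \cref{geo:hyperbolicity}, the point of $\beta$ at distance $t$ from $c_P$ lies within $D_0$ of some point $y$ on $\alpha$, with the error controlled by the Gromov product $(x\cdot R)_{c_P}\le 1$. Concretely, the vertex $\beta(t)$ is at distance at most $D_0$ (plus small additive constants) from a point $\alpha(s)$ with $s$ close to $t$.

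The candidate path runs from $x$ along $\alpha$ to $\alpha(s)$, then along a connecting path of length at most $D_0$ to $\beta(t)$, then back along $\beta$ to $\beta(1)=a$. Its total length is about $t+D_0+1+(t-1)\le L_0$. The main obstacle is showing that this path avoids $c_P$.

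\begin{itemize}
\item The segment of $\beta$ from $\beta(t)$ back to $a$ does not reach $\beta(0)=c_P$, since $\beta$ is a geodesic.
\item The connecting segment and the part of $\alpha$ beyond $x$ lie at distance at least $t-D_0-1\ge2$ from $c_P$, because $t=D_0+3$ and distances along $\beta$ from $c_P$ are exact.
\item The only remaining danger is that $\alpha$ itself passes through $c_P$ near its start. Here the hypothesis $d(c_Q,R)\le n$ is used: choose $\alpha$ to pass through $c_Q\ne c_P$ as its first step. Any return of $\alpha$ to $c_P$ would force $d(c_P,R)\le d(c_Q,R)-1<n$ if it occurred after $c_Q$, since $\alpha$ is geodesic past $c_Q$ with length at most $n$. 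This contradicts the definition of $n$.
\end{itemize}

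If the case $x=a$ or a degenerate short configuration arises (for example $n\le t$, so that $\beta(t)$ does not exist), take the path from $x$ to $c_Q$, then along $\beta_Q$ to $R$, then back along $\beta$ to $a$. This path has length at most $2n+1\le L_0$ and avoids $c_P$ by the same distance argument. Checking the index bookkeeping in the thin-triangle step, so that all pieces stay away from $c_P$ and the total length stays below $L_0=2+2t+2D_0$, is the delicate part. I would first do it carefully with the $\delta$-thin triangle $(c_P,c_Q,R)$, using the edge path $c_P\text{--}x\text{--}c_Q$ of length $2$ as the short side.
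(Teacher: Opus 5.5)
Your plan is the same as the paper's proof. You produce a path of length at most $L_0$ from $x$ to $a$ that avoids $c_P$ and apply \cref{geo:avoidance}, splitting according to whether $n=d(c_P,R)$ is at least $t$. In the long case you use thinness to find $y$ within $D_0$ of $z=\beta(t)$, then concatenate $x\,c_Q$, a piece of $\beta_Q$, a short connector, and $\beta$ back to $a$. In the short case you go through $R$. The triangle you name at the very end, $(c_P,c_Q,R)$ with short side $c_P\,x\,c_Q$, is exactly the one the paper uses. Your short-case path and your argument that $\beta_Q$ misses $c_P$ are also the paper's.

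The version in your middle paragraph, however, has a gap.
\begin{itemize}
\item To apply \cref{geo:hyperbolicity} to the triangle $(c_P,x,R)$ you need $\alpha$ to be a geodesic. In your third bullet you take $\alpha=x\,c_Q\,\beta_Q$, which has length $1+d(c_Q,R)$ and need not be geodesic.
\item If instead you shorten it, you lose your reason that $\alpha$ misses $c_P$, namely that it runs through $c_Q$.
\item Your second bullet is false for the initial part of $\alpha$, whose vertices in $P$ are adjacent to $c_P$. Only the connector is guaranteed to stay at distance at least $t-D_0=3$ from $c_P$.
\end{itemize}

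Either of two fixes closes this.

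\emph{Use the paper's triangle.} The vertex $z$ is at distance at least $t-2>D_0$ from each of $c_P,x,c_Q$, so $y$ lies on $\beta_Q$. The plain triangle inequality $d(c_Q,y)\le 2+t+D_0$ then gives length at most $1+(2+t+D_0)+D_0+(t-1)=L_0$, with no fellow-travelling or Gromov-product estimate needed.

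\emph{Keep $(c_P,x,R)$ and add the missing observation.} A geodesic from $x$ to $R$ through $c_P$ has length at least $1+n$, whereas $d(x,R)\le 1+d(c_Q,R)\le 1+n$. So either no geodesic from $x$ to $R$ meets $c_P$, or $d(x,R)=1+d(c_Q,R)$ and $x\,c_Q\,\beta_Q$ is itself a geodesic missing $c_P$. With such an $\alpha$, the bound $d(z,x)\ge t-1>D_0$ puts $y$ on $\alpha$. The path then has length at most $(1+t+D_0)+D_0+(t-1)<L_0$.
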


\begin{proof}
Fix $x\in P\cap Q$ and abbreviate $A=c_P$ and $B=c_Q$.  The path
$A,x,B$ is geodesic of length two: different cone points have no direct edge.
Choose a geodesic from $B$ to $R$.  It cannot pass through $A$, because such a path
would have length $2+d(A,R)>d(B,R)$.  All distances in this proof, except
where marked $d_P$, are in $\Xhat$.

Suppose first that $d(A,R)\geq t$.  Let $z$ be the vertex at distance $t$
along the chosen geodesic from $A$ to $R$.  Its distance from every vertex
of $A,x,B$ is at least $t-2>D_0$.  Thinness of the triangle with vertices
$A,B,R$ therefore gives a vertex $y$ on the $B$-to-$R$ geodesic with
$d(y,z)\leq D_0$.  A shortest path from $y$ to $z$ avoids $A$, since
$d(A,z)=t>D_0$.  Concatenate the edge from $x$ to $B$, the segment from
$B$ to $y$, that shortest path, and the segment from $z$ back to $a$.
This path avoids $A$ and has length at most
\[
 1+(2+t+D_0)+D_0+(t-1)=L_0.
\]
Here $d(B,y)\leq d(B,A)+d(A,z)+d(z,y)\leq2+t+D_0$.

If $d(A,R)<t$, concatenate $x,B$, the geodesic from $B$ to $R$, and the
segment from $R$ back to $a$.  It again avoids $A$ and has length at most
\[
 1+d(B,R)+d(A,R)-1\leq2d(A,R)\leq L_0.
\]
Applying \cref{geo:avoidance} to either path gives
$d_P(x,a)\leq12(L_0+1)\lambda\sigma(P)$.  Intrinsic vertex distances are
integers, which gives \eqref{geo:overlap-radius}.  The initial choice of
$a$ involved only $P$, $R$, and one geodesic, as required.
\end{proof}

\begin{figure}[htbp]
\centering
\begin{tikzpicture}[
  x=1cm,y=1cm,font=\small,line cap=round,line join=round,
  group vertex/.style={circle,draw=black,fill=white,inner sep=1.5pt},
  cone center/.style={rectangle,fill=black,inner sep=1.8pt},
  route/.style={draw=blue!65!black,line width=1.1pt}
]
\node at (2.1,4.05) {Coned metric in $\Xhat$};
\node at (9.3,4.05) {Intrinsic metric in $P$};

\coordinate (coneP) at (0.2,0.3);
\coordinate (coneQ) at (4.0,0.3);
\coordinate (root) at (2.1,3.55);
\coordinate (overlap) at (2.1,0.3);
\coordinate (entrance) at (0.73,1.21);
\coordinate (farpoint) at (1.25,2.10);
\coordinate (nearpoint) at (3.0,2.01);

\draw[black!60] (coneP)--(root)--(coneQ)--(coneP);
\draw[route] (overlap)--(coneQ)--(nearpoint);
\draw[route,dashed] (nearpoint)--(farpoint)
  node[midway,above=2pt] {$\leq D_0$};
\draw[route] (farpoint)--(entrance);

\node[cone center,label=below left:{$c_P$}] at (coneP) {};
\node[cone center,label=below right:{$c_Q$}] at (coneQ) {};
\node[group vertex,label=right:{$R$}] at (root) {};
\node[group vertex,label=below:{$x$}] at (overlap) {};
\node[group vertex,label=left:{$a$}] at (entrance) {};
\node[group vertex,label=above left:{$z$}] at (farpoint) {};
\node[group vertex,label=above right:{$y$}] at (nearpoint) {};
\node[font=\footnotesize] at (2.1,-0.42)
  {$d_{\Xhat}(c_Q,R)\leq d_{\Xhat}(c_P,R)$};

\draw[black!65] (9.3,1.85) ellipse (2.55 and 1.62);
\draw[draw=blue!65!black,fill=blue!6,dashed]
  (8.35,1.85) ellipse (1.35 and 1.20);
\node at (11.15,1.45) {$P$};
\node[font=\footnotesize] at (8.35,1.02) {$B_P(a,r_P)$};
\coordinate (intrinsicA) at (8.35,1.85);
\coordinate (pointone) at (8.70,2.45);
\coordinate (pointtwo) at (9.10,1.85);
\coordinate (pointthree) at (8.75,1.40);
\draw[black!40] (intrinsicA)--(pointone);
\draw[black!40] (intrinsicA)--(pointtwo);
\draw[black!40] (intrinsicA)--(pointthree);
\node[group vertex,label=left:{$a$}] at (intrinsicA) {};
\node[group vertex,label=right:{$x_1$}] at (pointone) {};
\node[group vertex,label=right:{$x_2$}] at (pointtwo) {};
\node[group vertex,label=right:{$x_3$}] at (pointthree) {};
\node[font=\footnotesize] at (9.3,-0.42)
  {$x_j\in P\cap Q_j\subseteq B_P(a,r_P)$};
\end{tikzpicture}
\caption{Two metrics in \cref{geo:overlap}.  Left: the chosen geodesic in
the coned-off graph from $c_P$ to $R$ determines $a$.  In the case
$d(c_P,R)\ge t$ of the proof, thinness of triangles supplies the dashed
path from $z$ to $y$, and the blue path
$x,c_Q,y,z,a$ has length at most $L_0$ and avoids $c_P$.
Right: \cref{geo:avoidance} places every overlap with a copy whose cone point is no farther
from $R$ in the same intrinsic ball $B_P(a,r_P)$.  In a finite collection,
choosing $c_P$ farthest from $R$ includes every other copy.}
\label{geo:overlap-figure}
\end{figure}

\begin{lemma}
\label{geo:tree-balls}
Fix $d\geq3$ and $\eta>0$.  There is a girth threshold, depending only on
$d$ and $\eta$, such that every finite simple $d$-regular graph of girth
$\sigma$ above that threshold has the following property: every induced
ball of radius at most $\sigma/16$ is a tree with fewer than $\eta n$
vertices, where $n$ is the order of the graph.
\end{lemma}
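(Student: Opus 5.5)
Two facts need to be checked: an induced ball of radius $r\le\sigma/16$ is a tree, and it has fewer than $\eta n$ vertices. The threshold comes from the second.

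\emph{Tree property.} Let $B=B(a,r)$ with $r\le\sigma/16$, and suppose the induced subgraph on $B$ contains a cycle $C$. Choose a breadth-first spanning tree of $B$ rooted at $a$. Every vertex of $B$ is joined to $a$ by a geodesic lying in $B$, so this tree exists. Since $C$ is not contained in the tree, some edge $\{u,v\}$ of $C$ is a non-tree edge. The tree paths from $a$ to $u$ and to $v$, together with $\{u,v\}$, form a closed walk of length at most $2r+1$. This walk contains a genuine cycle, because $\{u,v\}$ is not a tree edge and is traversed only once. Hence $\sigma\le 2r+1\le\sigma/8+1$, which forces $\sigma\le 8/7$. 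This is impossible for a simple graph, whose girth is at least $3$. So the induced ball is a tree, and it has at most $1+d\sum_{j<r}(d-1)^j\le 2(d-1)^{r}+1$ vertices; for $d\ge3$ this is at most $3(d-1)^r$.

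\emph{Size estimate.} The key point is that $n$ is exponentially large in $\sigma$. The ball of radius $\lfloor(\sigma-1)/2\rfloor$ around any vertex is also a tree, by the same argument. Since the graph is $d$-regular, this tree is a full $d$-regular tree, so
\[
 n\ge 1+d\sum_{j<\lfloor(\sigma-1)/2\rfloor}(d-1)^j\ge (d-1)^{\lfloor(\sigma-1)/2\rfloor}\ge (d-1)^{\sigma/2-1}.
\]
Combining with the previous bound, and using $r\le\sigma/16$,
\[
 \frac{|B|}{n}\le 3(d-1)^{\sigma/16-\sigma/2+1}=3(d-1)^{1-7\sigma/16}.
\]
The right-hand side tends to $0$ as $\sigma\to\infty$ because $d-1\ge2$. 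Choose the threshold $\sigma_0(d,\eta)$ so that $3(d-1)^{1-7\sigma/16}<\eta$ for all $\sigma>\sigma_0$; then $|B|<\eta n$.

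\emph{Main obstacle.} The only delicate point is the bookkeeping of floors and radii, so that the lower bound on $n$ uses a radius strictly below $\sigma/2$ while the upper bound on $|B|$ uses exponent $r\le\sigma/16$. The gap between the exponents $1/2$ and $1/16$ leaves ample room, so any crude version of these counts suffices.
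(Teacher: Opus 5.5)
Your proof follows the paper's route. The breadth-first-tree argument gives the tree property, and regularity plus large girth give a full $d$-regular tree ball of radius about $\sigma/2$, so $n$ exceeds $|B|$ by an exponential factor. The paper takes the limit of the ratio $N_d(\lfloor\sigma/16\rfloor)/N_d(\lfloor(\sigma-2)/2\rfloor)$; you instead make the rate explicit.

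Two steps are misstated, though both are easily repaired.

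\emph{The upper bound on $|B|$.} The inequality $1+d\sum_{j<r}(d-1)^j\le 2(d-1)^r+1$ fails for $d=3$; at $r=2$ it reads $10\le 9$. The sum equals $1+\frac{d}{d-2}\bigl((d-1)^r-1\bigr)$, and $d/(d-2)\le 3$. So go straight to $|B|\le 3(d-1)^{\lfloor r\rfloor}$, which is the bound you actually use.

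\emph{The lower bound on $n$.} For odd $\sigma$ the ball of radius $R=\lfloor(\sigma-1)/2\rfloor$ need not be a tree. In that case $2R+1=\sigma$, so the breadth-first argument gives no contradiction. The Petersen graph is a counterexample: it is cubic of girth $5$, and its ball of radius $2$ is the whole graph. The bound $n\ge 1+d\sum_{j<R}(d-1)^j$ is nevertheless true. Two distinct non-backtracking walks of length at most $R$ from the center with the same endpoint would produce a cycle of length at most $2R<\sigma$. Alternatively, use radius $\lfloor(\sigma-2)/2\rfloor$, as the paper does. This only changes your exponent to $3/2-7\sigma/16$.
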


\begin{proof}
If $2r+1<\sigma$, the induced ball of integer radius $r$ is a tree.  An
edge outside a breadth-first spanning tree would otherwise produce a
cycle of length at most $2r+1$.  Its order is at most
\[
 N_d(r)=1+d\sum_{j=0}^{r-1}(d-1)^j.
\]
Apply the same tree observation at radius
$R=\lfloor(\sigma-2)/2\rfloor$.  Regularity gives $n\geq N_d(R)$,
because all vertices before the last level have their full set of
distinct forward neighbors.  Consequently
\[
 \frac{\abs{B(v,\lfloor\sigma/16\rfloor)}}{n}
 \leq\frac{N_d(\lfloor\sigma/16\rfloor)}
              {N_d(\lfloor(\sigma-2)/2\rfloor)}
 \longrightarrow0\qquad(\sigma\longrightarrow\infty).
\]
This proves the claim, uniformly in the graph and its center.
\end{proof}

\begin{corollary}
\label{geo:overlap-tree}
Choose $\lambda$ sufficiently small that
\begin{equation}
 12(L_0+1)\lambda\leq1/16.
 \label{geo:lambda-choice}
\end{equation}
If the relator graphs satisfy the girth threshold in
\cref{geo:tree-balls}, every ball in \cref{geo:overlap} is an induced tree
with fewer than $\eta\abs{P}$ vertices.
\end{corollary}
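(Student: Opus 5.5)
The plan is to combine the radius bound of \cref{geo:overlap} with the tree-ball estimate of \cref{geo:tree-balls}. Fix a root $R$ and a copy $P$ with $c_P\neq R$, of type $i$, so $\sigma(P)=\sigma_i$ and $\abs P=n_i$. By \eqref{geo:overlap-radius} and the choice \eqref{geo:lambda-choice},
\[
 r_P=\left\lfloor12(L_0+1)\lambda\sigma(P)\right\rfloor\le\left\lfloor\sigma(P)/16\right\rfloor\le\sigma(P)/16 .
\]
Hence the overlap ball $B_P=B_P(a,r_P)$ is an induced ball of $P$ whose radius is at most $\girth(P)/16$.

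Next, the label-preserving identification $\iota_P\colon\Theta_i\to P$ is an isomorphism of graphs. In particular it preserves the intrinsic path metric $d_P$, so $B_P$ is the image of the induced ball $B_{\Theta_i}(\iota_P^{-1}(a),r_P)$ in $\Theta_i$. Here one should check that the edges of the induced subgraph of $P$ are exactly the images of edges of $\Theta_i$. This holds because $P$ is defined as the image subgraph $g\varphi_i(\Theta_i)$, and the ball is taken in $P$, not in $X$. Since $\Theta_i$ is a finite simple $d$-regular graph whose girth exceeds the threshold of \cref{geo:tree-balls} for the fixed $d$ and $\eta$, that lemma shows this ball is a tree with fewer than $\eta n_i=\eta\abs P$ vertices. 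Transporting back through $\iota_P$ gives the claim for $B_P$.

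There is no real obstacle. The only points needing care are that the floor in $r_P$ keeps it an integer radius $\le\sigma/16$, which is what \cref{geo:tree-balls} requires, and that the constants $t$, $L_0$, $D_0$ are universal, so \eqref{geo:lambda-choice} fixes $\lambda$ before any graph is chosen. One also needs $2r_P+1<\sigma(P)$ for the tree property. This follows from $r_P\le\sigma/16$ once $\sigma>16/7$, which the threshold guarantees.
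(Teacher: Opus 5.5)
Your proposal is correct and follows the paper's proof: the choice \eqref{geo:lambda-choice} gives $r_P\le\sigma(P)/16$, and \cref{geo:tree-balls} is then applied in the source graph $\Theta_i$ through the isomorphism $\iota_P$. Your extra check that $2r_P+1<\sigma(P)$ is already part of \cref{geo:tree-balls}, and it only needs $\sigma(P)>8/7$, although your threshold $16/7$ is also sufficient.
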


\begin{proof}
The radius in \eqref{geo:overlap-radius} is at most $\sigma(P)/16$.
Apply \cref{geo:tree-balls} in the source graph of $P$.
\end{proof}

\subsection{The space of finitely supported cycles}

In this subsection, we show that every finitely supported cycle in the
Cayley graph, with coefficients in $\Ftwo$, can be written in exactly one
way as a sum of cycles in copies (\cref{geo:cycle-sum}).  Existence
follows from van Kampen diagrams, and uniqueness follows from the tree
property of the overlap balls.

For a graph $Y$, let $Z_1(Y;\Ftwo)$ denote its space of finitely supported cycles:
the finitely supported edge chains over $\Ftwo$ with zero boundary;
equivalently, the finite edge sets in which every vertex is incident to
an even number of selected edges (with loops counted twice).
The following is an algebraic direct sum, with no completion.

\begin{proposition}
\label{geo:cycle-sum}
Under the hypotheses of \cref{geo:overlap-tree}, inclusion of the copies
induces an internal direct-sum decomposition
\begin{equation}
 Z_1(X;\Ftwo)=\bigoplus_P Z_1(P;\Ftwo).
 \label{geo:cycle-decomposition}
\end{equation}
\end{proposition}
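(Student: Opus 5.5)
The plan is to prove spanning and independence separately. Each copy $P$ is a subgraph of $X$, so $Z_1(P;\Ftwo)\subseteq Z_1(X;\Ftwo)$ after extending chains by zero, and the sum map $\bigoplus_P Z_1(P;\Ftwo)\to Z_1(X;\Ftwo)$ is well defined.

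\emph{Surjectivity.} I would first reduce to cycles of single closed paths. Every element of $Z_1(X;\Ftwo)$ is a finite edge set with even degrees at every vertex, so it decomposes into edge-disjoint closed paths. It therefore suffices to treat the mod-$2$ chain $[\gamma]$ of one closed path $\gamma$ in $X$. Its label is a relator, so the van Kampen lemma gives a diagram whose bounded faces have contours that are closed paths in copies. Since the diagram is a planar $2$-complex, the outer contour is homologous, over $\Ftwo$, to the sum of the face contours: each interior edge is counted twice by the faces and each boundary edge once. Mapping to $X$ gives $[\gamma]=\sum_F[\partial F]$, and each $[\partial F]$ lies in $Z_1(P_F;\Ftwo)$ for the copy $P_F$ assigned to $F$. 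Repeated edges and backtracks in contours cancel mod $2$, so no care is needed about nonembedded contours. One could alternatively cite the van Kampen lemma directly in its homological form.

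\emph{Injectivity.} This is the main step. Suppose $\sum_{P\in\mathcal F}z_P=0$ for a finite family $\mathcal F$ with each $z_P\in Z_1(P;\Ftwo)$ nonzero, and $\mathcal F\neq\varnothing$. I will derive a contradiction using \cref{geo:overlap} and \cref{geo:overlap-tree}.
\begin{enumerate}
\item Fix any root $R$, for instance a group vertex.
\item Choose $P\in\mathcal F$ whose cone point $c_P$ is farthest from $R$ in $\Xhat$. Then $d_{\Xhat}(c_Q,R)\le d_{\Xhat}(c_P,R)$ for every other $Q\in\mathcal F$. If $c_P=R$, then $\mathcal F$ is a single copy with $c_P$ at distance $0$; in that case choose $R$ to be a group vertex, or note that the family is $\{P\}$ and $z_P=0$ immediately.
\item By \cref{geo:overlap}, every edge of $P$ that also lies in another $Q\in\mathcal F$ has both endpoints in $P\cap Q\subseteq B_P$. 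By \cref{geo:overlap-tree}, the induced subgraph $B_P$ is a tree, so every such edge is an edge of that tree.
\item From $z_P=\sum_{Q\neq P}z_Q$, the support of $z_P$ consists of edges of $P$ that lie in other copies. Hence $z_P$ is supported in the tree $B_P$.
\item A nonzero cycle cannot be supported in a forest, since a nonempty even-degree edge set contains a cycle. Therefore $z_P=0$, a contradiction.
\end{enumerate}
By induction on $|\mathcal F|$, all $z_P$ vanish.

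One subtlety in step 3 is that an edge of $P$ lying in $Q$ must be an edge of the induced subgraph on $B_P$. This holds because $B_P$ is induced and $X$ is simple by \cref{geo:avoidance}.

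I expect the main obstacle to be making the rooting argument rigorous. In particular, I need the center $a$ to be independent of $Q$, which \cref{geo:overlap} provides. I also need to check that \cref{geo:overlap} applies to every other member of $\mathcal F$, including copies whose cone points tie in distance with $c_P$; the lemma's non-strict inequality handles ties.
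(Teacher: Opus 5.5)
Your proposal is correct and follows the paper's proof essentially step for step. Spanning comes from reading van Kampen diagrams modulo two, and independence comes from choosing the participating copy $P$ whose cone point is farthest from a group-vertex root, so that every edge in the support of $z_P$ lies in the single induced tree $B_P$ of \cref{geo:overlap} and \cref{geo:overlap-tree}, which has zero cycle space. Two small remarks, neither of which affects the argument: the aside about $c_P=R$ is unnecessary once $R$ is a group vertex, and the mod-two face count is better phrased as ``every edge of the planar diagram has two sides'' than as ``boundary edges are counted once,'' since the outer contour can traverse a bridge twice.
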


\begin{proof}
A finite cycle in $X$ is a sum of edge chains of closed paths.  Each such
path has trivial group label and hence a finite van Kampen diagram over
the presentation of $\Gamma$.  Taking its face boundaries modulo two
expresses its edge chain as a sum of closed-path chains in copies.  This
proves spanning.

For independence, suppose a finite sum of nonzero chains
$z_P\in Z_1(P;\Ftwo)$, taken in distinct copies, vanishes.  Fix a group
vertex $R$ and choose among these copies one whose cone point is farthest
from $R$.  Call it $P$.  By \cref{geo:overlap}, all intersections of $P$
with the other participating copies lie in one ball $B_P$; by
\cref{geo:overlap-tree}, that ball is an induced tree.  Every edge in the
support of $z_P$ must cancel against an edge of another participating
chain.  Both endpoints of every such edge therefore lie in $B_P$.
Inducedness places the edge itself in that tree.  A finite tree has zero
cycle space, contradicting $z_P\neq0$.
\end{proof}

\section{Expanders and subspaces of prescribed dimension}
\label{sec:data}

In this section, we choose the relator graphs and the linear spaces of
functions used to define the compact group $\Omega$.  For every
sufficiently large integer $k$, we choose a
Lubotzky--Phillips--Sarnak expander $\Theta_k$ with between $2k$ and $8k/3$
vertices.  We label the entire family by a fixed finite alphabet $S$.
We then choose a finite-dimensional vector space $E=\Ftwo^m$ with enough
nonzero vectors to specify the sets of outgoing edge labels.
The reference function $w_k$ and the elements of $\mathcal C_k$ are
functions from $\Vertices(\Theta_k)$ to $E$, whereas the dimension of
$\mathcal C_k$ is $k$ over $\Ftwo$.  Thus $\mathcal C_k$ has $2^k$
elements, independently of $m$.

We require every nonzero element of $\mathcal C_k$ and of its orthogonal
complement to be nonzero at more than $\eta n_k$ vertices.  The first
condition forces distinct functions in $\mathcal C_k$ to differ at many
vertices.  The second allows arbitrary values on a sufficiently small
vertex set to extend to an element of $\mathcal C_k$.  These are the two
properties needed for the spectral estimate and the probability
calculation, respectively.  The choices are made in the order
$S,m,\eta,K$, after the graphs and geometric constants have
been fixed.

\subsection{Expanders \texorpdfstring{$\Theta_k$}{Theta k}}

In this subsection, we choose the expanders $\Theta_k$, one for each
sufficiently large $k$, and label them before choosing the vector space
$E$.  The graphs are Lubotzky--Phillips--Sarnak Ramanujan graphs, and the
prime number theorem in an arithmetic progression shows that their
orders are dense enough to provide one graph of the required size for
each $k$.

We use two classical number-theoretic facts.  The Lubotzky--Phillips--Sarnak
construction \cite{LPS}, with the fixed prime $5$ and primes
$q\equiv1\pmod{20}$, gives connected $6$-regular graphs $X^{5,q}$, which are
Cayley graphs of the groups $\PSL_2(\mathbb F_q)$.  Their numbers of
vertices $n(q)$, their girths $\sigma(q)$, and their adjacency operators
$\Adj$ satisfy
\begin{equation}\label{data:lps}
 n(q)=\frac{q(q^2-1)}2,\qquad
 \sigma(q)\ge2\log_5q,\qquad
 6I-\Adj\ge (6-2\sqrt5)I,
\end{equation}
where the last inequality holds on the orthogonal complement of the
constant functions.
Their diameters are at most $2\log_5 n(q)+2\log_5 2+1$.
The prime number theorem in the progression $1\pmod{20}$ implies that
successive such primes, and consequently successive orders $n(q)$, have
ratio tending to one.  For a quantitative statement implying this fact,
see \cite{BMOR}.  Indeed, write
$\vartheta(x;20,1)=\sum\log q$, the sum over primes $q\le x$ with
$q\equiv1\pmod{20}$.  The prime number theorem in this progression states
that $\vartheta(x;20,1)=x/8+o(x)$, and this implies
\[
 \vartheta((1+\delta)x;20,1)-\vartheta(x;20,1)
 =\delta x/8+o(x)>0
\]
for each fixed $\delta>0$ and all sufficiently large $x$.

\begin{proposition}\label{data:family}
There exist a finite alphabet $S$, integers $m\ge1$ and $K\ge2$, and
labeled connected simple $d$-regular graphs $(\Theta_k)_{k\ge K}$, with
$d=6$, satisfying the hypotheses of \Cref{geo:hypotheses}.  We choose
$m$ so that
\begin{equation}\label{code:alphabet}
 2^m-1\ge
 \#\{J\subseteq S^{\pm1}:\abs J=d\},
 \qquad E=\Ftwo^m,
\end{equation}
and put
\[
 \begin{gathered}
 n_k=\abs{\Vertices(\Theta_k)},\qquad \sigma_k=\girth\Theta_k,\\
 \rho=6-2\sqrt5,\qquad \kappa=\rho/2,\\
 \eta=\frac1{100m},\qquad c=\frac{\eta}{2}\min\{\kappa,1\}.
 \end{gathered}
\]
The graphs can be chosen so that
\begin{enumerate}[label=\textup{(\roman*)}]
\item $2k\le n_k\le8k/3$ and $n_k>100$;
\item $dI-\Adj(\Theta_k)\ge\rho I$ on the complement of the constants,
and $|\partial W|\ge\kappa\min\{|W|,n_k-|W|\}$ for every vertex subset $W$,
where the edge boundary $\partial W$ is the set of edges with exactly one
endpoint in $W$;
\item $\sigma_k>\max\{16,20/c\}$, $\lambda\sigma_k>3$, and the overlap
balls from \Cref{geo:overlap} have radius at most $\sigma_k/16$;
\item every induced intrinsic ball of radius at most $\sigma_k/16$ is a
tree with fewer than $\eta n_k$ vertices.
\end{enumerate}
All these properties persist for the graphical presentation formed from
any subfamily, using the same labels and constants.
\end{proposition}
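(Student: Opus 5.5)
The construction runs in the order $S,m,\eta,K$ described in the text. First I fix the universal constants $D_0$ and $L_0$ from \eqref{geo:universal-constants}. I then choose $\lambda\le1/24$ so small that \eqref{geo:lambda-choice} holds, with strict inequality $\lambda<1/24$ as needed for \Cref{lab:embedding}.

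For each sufficiently large $k$, I choose a prime $q_k\equiv1\pmod{20}$ with $n(q_k)=q_k(q_k^2-1)/2\in[2k,8k/3]$. Since $q\mapsto n(q)$ is roughly $q^3/2$, this interval corresponds to $q\in[(4k)^{1/3}(1+o(1)),(16k/3)^{1/3}(1+o(1))]$, whose endpoints have ratio about $(4/3)^{1/3}>1$. The prime number theorem in the progression, in the form $\vartheta((1+\delta)x;20,1)-\vartheta(x;20,1)>0$, supplies such a prime for large $k$. I set $\Theta_k=X^{5,q_k}$; repetitions are allowed, as noted in \Cref{lab:logarithmic}. Then by \eqref{data:lps}:
\begin{itemize}
\item the graphs are $6$-regular with $\sigma_k\ge2\log_5q_k\gtrsim\tfrac23\log_5k$, so $\sigma_k\to\infty$ logarithmically in $k$;
\item $\diam\Theta_k\le2\log_5n(q_k)+O(1)\le A\sigma_k$ for a fixed constant $A$, since $\log n(q)\approx3\log q$;
\item the spectral gap $6I-\Adj\ge\rho I$ holds on the complement of the constants.
\end{itemize}
The Cheeger bound in (ii) follows from the spectral gap by the standard test-function argument. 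Take $\xi=\one_W-(|W|/n)\one$. Then $\langle\Delta\xi,\xi\rangle=|\partial W|$ and $\norm\xi^2=|W|(n-|W|)/n\ge\tfrac12\min\{|W|,n-|W|\}$, which gives $|\partial W|\ge(\rho/2)\min\{|W|,n_k-|W|\}$.

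Next I label. For $K_1$ large, $\lambda\sigma_k>3$ for all $k\ge K_1$. \Cref{lab:logarithmic} gives \eqref{lab:multiplicity}, so \Cref{lab:main} produces a finite alphabet $S$ and a reduced labeling of $(\Theta_k)_{k\ge K_1}$ with the uniqueness property. \Cref{lab:embedding} then gives (iv) of \Cref{geo:hypotheses}, and (i)–(iii) of \Cref{geo:hypotheses} hold by construction. With $S$ fixed, I choose $m$ by \eqref{code:alphabet} and define $\eta$ and $c$ as stated.

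Finally I pick $K\ge K_1$ so that for $k\ge K$ the following hold:
\begin{itemize}
\item $n_k>100$;
\item $\sigma_k>\max\{16,20/c\}$;
\item $\sigma_k$ exceeds the girth threshold of \Cref{geo:tree-balls} for $d=6$ and this $\eta$.
\end{itemize}
All three are possible because $n_k$ and $\sigma_k$ tend to infinity. Then (iv) is \Cref{geo:tree-balls}. The overlap-radius part of (iii) follows from $r_P\le12(L_0+1)\lambda\sigma_k\le\sigma_k/16$.

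For persistence under subfamilies, the labeling uniqueness restricts by the last sentence of \Cref{lab:main}, and the embedding theorem applies to the subfamily presentation. The constants $D_0$, $L_0$, $\lambda$ and $\eta$ do not depend on the family, so \Cref{geo:overlap} and \Cref{geo:overlap-tree} hold verbatim. Properties (i), (ii) and (iv) are intrinsic to the graphs.

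The main obstacle is the ordering of choices. Truncating at $K$ must happen after $S$ and $m$ are fixed, yet it must not invalidate the labeling. This works because restricting to a subfamily preserves the uniqueness of labels and every hypothesis of \Cref{geo:hypotheses}. The only genuinely analytic input is the prime-density step, and the interval ratio $(4/3)^{1/3}$ is bounded away from $1$, so it is safe.
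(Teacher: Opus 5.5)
Your proposal is correct and follows the paper's proof essentially step for step. You fix $\lambda$ from $L_0$, take Lubotzky--Phillips--Sarnak graphs $X^{5,q}$ with $q\equiv1\pmod{20}$ and orders in $[2k,8k/3]$ via the prime number theorem in that progression, and get the Cheeger bound from the test function $\one_W-(|W|/n_k)\one$; you then label the family above an initial cutoff by \Cref{lab:logarithmic} and \Cref{lab:main}, choose $m,\eta,c$ and a final cutoff $K$, and obtain persistence by restricting labels and reapplying \Cref{lab:embedding}. The only cosmetic differences are that you pick a prime directly in the target interval rather than the least order at least $2k$, and that you cite \Cref{geo:tree-balls} instead of repeating its argument inline.
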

\begin{proof}
Let $D_0$ and $L_0$ be the universal constants in \Cref{sec:geometry}, and
choose once and for all
\begin{equation}\label{data:lambda}
 0<\lambda<1/24,\qquad 12(L_0+1)\lambda\le1/16.
\end{equation}
For every sufficiently large integer $k$, choose the least
Lubotzky--Phillips--Sarnak order $n(q)$ at least $2k$, and let $\Theta_k$
be a graph isomorphic to $X^{5,q}$.  The ratio assertion above gives
$n_k\le8k/3$ for all large $k$.  Different indices $k$ may give the same
unlabeled graph; such graphs are treated as distinct members of the
family and are labeled separately.

The constants in \eqref{data:lps} are uniform.  If $|W|=w$, the function
$\one_W-w/n_k$ has squared norm $w(n_k-w)/n_k$ and Laplacian energy
$|\partial W|$.  Thus
\[
 |\partial W|\ge\rho\frac{w(n_k-w)}{n_k}
 \ge\frac\rho2\min\{w,n_k-w\}.
\]
The diameter-to-girth ratio is bounded.  Also
$\sigma_k\ge c_0\log k$ for some $c_0>0$, by \eqref{data:lps} and
$n_k\asymp k$.  Choose an initial cutoff $K_0$ so that these estimates
hold and $\lambda\sigma_k>3$ for $k\ge K_0$.

By \Cref{lab:logarithmic,lab:main}, the family indexed by $k\ge K_0$ admits
reduced labels in a finite alphabet $S$ for which the labels of long
simple paths are globally unique.  The labeling theorem uses the degree
bound, the diameter-to-girth bound, the parameter $\lambda$, and the
girth multiplicity bound; none of these depends on $m$ or $\eta$.
Fix these labels.  Now choose $m$ as in \eqref{code:alphabet}, set
$\eta=1/(100m)$, and define $c$ as in the statement.

In a graph of girth $\sigma$, an induced ball of integer radius $r$ is a
tree whenever $2r+1<\sigma$: an additional edge, together with the two
root paths, would yield a shorter cycle.  Its number of vertices, for
$r\le\sigma/16$, is at most
\[
 1+d\sum_{j=0}^{\lfloor\sigma/16\rfloor-1}(d-1)^j.
\]
Regularity and the same tree argument at radius
$R=\lfloor(\sigma-2)/2\rfloor$ give a lower bound
$1+d\sum_{j=0}^{R-1}(d-1)^j$ for the graph order.  The ratio of these two
bounds tends to zero as $\sigma\to\infty$.  We may therefore choose a
final cutoff $K\ge\max\{K_0,2\}$ so that (iii), (iv), and $n_k>100$ hold
for all $k\ge K$.

We use the labels already chosen for the indices $k\ge K$.  By
\Cref{lab:embedding}, the relator graphs embed isometrically, giving
\Cref{geo:hypotheses}; \eqref{data:lambda} gives the asserted overlap
radius.  Restricting the labels to any further subfamily preserves their
uniqueness.  Applying the embedding theorem to that subfamily's own
presentation gives the last assertion, since the other estimates are
intrinsic to the individual graphs.
\end{proof}

\subsection{Reference functions and subspaces of prescribed dimension}
\label{code:section}

In this subsection, we construct
$w_k:\Vertices(\Theta_k)\to E$ and a $k$-dimensional
$\Ftwo$-linear subspace $\mathcal C_k$ containing $w_k$.  The value
$w_k(v)$ specifies the set of labels of the edges starting at $v$.
We choose $\mathcal C_k$ so that every nonzero function in
$\mathcal C_k$ or $\mathcal C_k^\perp$ is nonzero at a fixed positive
proportion of the vertices.

Fix the labeled full family and the vector space $E=\Ftwo^m$ from
\Cref{data:family}.  At a vertex $v$ of $\Theta_k$, let
\[
 J_k(v)\subseteq S^{\pm1}
\]
be the set of labels of the oriented edges of $\Theta_k$ starting at $v$.
Reducedness and $d$-regularity imply $\abs{J_k(v)}=d$.  By
\eqref{code:alphabet}, there is an injective assignment
$J\mapsto a_J$ from all $d$-element subsets of $S^{\pm1}$ to
$E\setminus\{0\}$.  Fix one such assignment, and define
$\mathcal L:E\to\{J\subseteq S^{\pm1}:\abs J=d\}\cup\{\varnothing\}$
and the \emph{reference function} $w_k\in E^{\Vertices(\Theta_k)}$ by
\begin{equation}\label{code:decoder}
 \mathcal L(a)=
 \begin{cases}
 J,&a=a_J\text{ for a }d\text{-element set }J,\\
 \varnothing,&\text{otherwise},
 \end{cases}
 \qquad
 w_k(v)=a_{J_k(v)}.
\end{equation}
Thus $\mathcal L(w_k(v))=J_k(v)$ for every vertex $v$, and $w_k(v)\ne0$
at every vertex.  These choices do not depend on a later subset of
indices or on the real number whose trace will be prescribed.

For a finite vertex set $F$, we regard $E^F$ as a vector space over
$\Ftwo$ with the nondegenerate bilinear form
\[
 \langle u,z\rangle=\sum_{v\in F}u(v)\cdot z(v),
 \qquad
 x\cdot y=\sum_{j=1}^m x_jy_j\quad(x,y\in E).
\]
Orthogonal complements are taken with respect to this form.
The support of a function $z:F\to E$ is
$\supp z=\{v\in F:z(v)\ne0\}$.  In particular, its size counts vertices,
not individual scalar coordinates of the values in $E$.

\begin{lemma}\label{code:exist}
For each $k\ge K$, there is an $\Ftwo$-linear subspace
$\mathcal C_k\subseteq E^{\Vertices(\Theta_k)}$ of dimension $k$ such that
$w_k\in\mathcal C_k$ and
\begin{equation}\label{code:distance}
 |\supp z|>\eta n_k
 \quad\text{for every nonzero }z\in\mathcal C_k\cup\mathcal C_k^\perp.
\end{equation}
Thus $\abs{\mathcal C_k}=2^k$.  Distinct functions in $\mathcal C_k$
differ at more than $\eta n_k$ vertices; this is the separation property
used in the spectral estimate.
\end{lemma}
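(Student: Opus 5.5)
The existence will follow from a counting (first-moment) argument over random subspaces, as announced in Step~4 of \Cref{sec:strategy}. Write $N=mn_k=\dim_{\Ftwo}E^{\Vertices(\Theta_k)}$ and call a nonzero $z$ \emph{light} if $|\supp z|\le\eta n_k$. First I count the light vectors. A light vector is specified by a support set of at most $\eta n_k$ vertices together with values in $E$ on it, so their number is at most
\[
 \sum_{j\le\eta n_k}\binom{n_k}{j}2^{mj}\le 2^{n_kh(\eta)+m\eta n_k},
\]
where $h$ is the binary entropy function. Since $\eta=1/(100m)$, the exponent is $n_k(h(\eta)+1/100)$. Using $h(\eta)\le\eta\log_2(e/\eta)$, this is at most $\beta N$ with $\beta$ a small absolute constant, roughly $\beta\le(\log_2(100em)+1)/(100m)\le 0.1$ for every $m\ge1$.

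Next I choose $\mathcal C_k$ uniformly at random among the $k$-dimensional subspaces containing $w_k$. Such a subspace is $\langle w_k\rangle\oplus U'$, where $U'$ corresponds to a uniformly random $(k-1)$-dimensional subspace of $E^{\Vertices(\Theta_k)}/\langle w_k\rangle$. The vector $w_k$ is nonzero at every vertex, so it is not light. For a fixed nonzero $z\notin\langle w_k\rangle$, the probability that $z\in\mathcal C_k$ is $(2^{k-1}-1)/(2^{N-1}-1)\le2^{k-N}$. A union bound therefore gives
\[
 \Pr(\mathcal C_k\text{ contains a light vector})\le2^{\beta N+k-N}.
\]
For the complement, $\mathcal C_k^\perp$ is an $(N-k)$-dimensional subspace contained in the hyperplane $w_k^\perp$. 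By symmetry it is uniformly distributed among such subspaces: the stabilizer of $w_k$ in $\mathrm{GL}$ of the ambient space preserves the nondegenerate form only up to transpose-inverse, and this carries uniform measure to uniform measure. A light $z$ can lie in $\mathcal C_k^\perp$ only if $z\in w_k^\perp$, and then it lies there with probability at most about $2^{(N-k)-(N-1)}=2^{1-k}$. The union bound gives
\[
 \Pr(\mathcal C_k^\perp\text{ contains a light vector})\le2^{\beta N+1-k}.
\]

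The main obstacle is this second bound. With $N=mn_k$ and $k\approx n_k/2$, the estimate $2^{\beta mn_k-k}$ is not small once $m\ge1$, because the entropy count is proportional to $N$ while $k$ is only of order $n_k$. I would handle it by counting more carefully: the correct count of light vectors is $2^{n_k(h(\eta)+m\eta)}$, not $2^{\beta N}$. With $m\eta=1/100$ and $h(\eta)\le h(1/100)<0.09$, the exponent is below $0.1\,n_k$. Since $n_k\le8k/3$, this is at most $0.27k<k-1$, using $n_k>100$. Similarly $N-k\ge(m-3/8)n_k$, so the first bound is at most $2^{0.1n_k-(m-3/8)n_k}$, which is tiny. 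Both failure probabilities are therefore below $1/2$, which is where the condition $2k\le n_k\le8k/3$ from \Cref{data:family} enters. Some choice of $\mathcal C_k$ satisfies \eqref{code:distance}, and $|\mathcal C_k|=2^k$ is immediate from $\dim\mathcal C_k=k$. The separation statement follows by applying \eqref{code:distance} to the difference $u-u'$ of two distinct elements of $\mathcal C_k$.
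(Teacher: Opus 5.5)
Your proposal is correct and follows essentially the same route as the paper. Both arguments take a uniformly random $k$-dimensional subspace containing $w_k$, pass to $\mathcal C^\perp$ as a uniform $(mn_k-k)$-dimensional subspace of $w_k^\perp$ (the bijection $\mathcal C\mapsto\mathcal C^\perp$ between these two families already justifies this, without the symmetry argument), bound the light vectors by $2^{(m\eta+H_2(\eta))n_k}<2^{n_k/10}$, and conclude by a union bound using $3n_k/8\le k\le n_k/2$. One harmless slip: the correct inequality is $mn_k-k\ge(m-1/2)n_k$, not $(m-3/8)n_k$, and with it the first failure probability is still at most $2^{-2n_k/5}$.
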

\begin{proof}
Write $n=n_k$ and $w=w_k$.  The bounds $2k\le n\le8k/3$ in
\Cref{data:family}(i) give
\[
 3n/8\le k\le n/2.
\]
The ambient space $E^{\Vertices(\Theta_k)}$ has dimension $mn$ over
$\Ftwo$.  Sample $\mathcal C$ uniformly among its $k$-dimensional
subspaces containing the line $\langle w\rangle$.  For a fixed
$z\notin\langle w\rangle$, passing to the quotient by this line gives
\begin{equation}\label{code:random-primal}
 \Pr(z\in\mathcal C)
 =\frac{2^{k-1}-1}{2^{mn-1}-1}<2^{k-mn}.
\end{equation}
This follows by counting nonzero vectors in a uniform subspace.
Orthogonal complementation makes $\mathcal C^\perp$ a uniform
$(mn-k)$-dimensional subspace of the $(mn-1)$-dimensional space
$w^\perp$.  Hence, for $z\ne0$,
\begin{equation}\label{code:random-dual}
 \Pr(z\in\mathcal C^\perp)
 \le\frac{2^{mn-k}-1}{2^{mn-1}-1}<2^{1-k},
\end{equation}
where the probability is zero when $z\notin w^\perp$.  This argument uses
nondegeneracy of the ambient form and also applies when
$\langle w,w\rangle=0$.

We next count the functions that are nonzero at at most
$r=\lfloor\eta n\rfloor$ vertices.  Put $u=\eta/(1-\eta)$.  Since
$0<u<1$,
\[
 u^r\sum_{j=0}^r\binom nj
 \le\sum_{j=0}^r\binom nj u^j\le(1+u)^n,
\]
and therefore
\[
 \sum_{j=0}^r\binom nj\le2^{H_2(\eta)n},
 \qquad
 H_2(t)=-t\log_2t-(1-t)\log_2(1-t).
\]
The function $H_2$ is increasing on $(0,1/2)$, and
$H_2(1/100)<9/100$.  For the latter bound, one may use
$\log_2 100<7$, $-\log(1-t)\le t/(1-t)$, and $\log2>1/2$.
The choice $\eta=1/(100m)$ makes $m\eta=1/100$ independent of $m$.
Thus the number of possible nonzero values at a vertex is offset by
the smaller support threshold, and
$m\eta+H_2(\eta)<1/10<1/4$.  Consequently,
\begin{equation}\label{code:hamming}
 \begin{aligned}
 \#\{z:|\supp z|\le r\}
 &=\sum_{j=0}^r\binom nj(2^m-1)^j\\
 &\le2^{(m\eta+H_2(\eta))n}<2^{n/4}.
 \end{aligned}
\end{equation}
Every nonzero function counted here lies outside $\langle w\rangle$,
because $w$ is nonzero at every vertex.  The union bound and
\eqref{code:random-primal}--\eqref{code:random-dual} show that the
probability that either space contains a nonzero function with at most
$\eta n$ nonzero vertex values is less than
\[
 2^{n/4+k-mn}+2^{n/4+1-k}
 \le2^{-n/4}+2^{1-n/8}<1.
\]
Here we used $m\ge1$, the dimension bounds above, and $n>100$.
Thus a suitable subspace exists.  This extends the random-subspace
construction in \cite[Lemma~4.1]{OpenAI2026} to subspaces of prescribed
dimension consisting of functions with values in $\Ftwo^m$.
\end{proof}

The condition on $\mathcal C_k^\perp$ gives the following local extension
property.

\begin{lemma}\label{code:local-extension}
Let $D\subseteq\Vertices(\Theta_k)$ satisfy $\abs D\le\eta n_k$.
Then the restriction map
\[
 \mathcal C_k\longrightarrow E^D,\qquad u\longmapsto u|_D,
\]
is surjective.
\end{lemma}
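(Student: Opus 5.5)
We argue by duality, exactly as sketched in Step~4 of \Cref{sec:strategy}. The restriction map $\rho\colon\mathcal C_k\to E^D$, $u\mapsto u|_D$, is $\Ftwo$-linear. It suffices to show that its image $\rho(\mathcal C_k)$ is all of $E^D$. Suppose not. Then, since the bilinear form $\langle\cdot,\cdot\rangle$ on $E^D$ introduced in \Cref{code:section} is nondegenerate, the orthogonal complement of the proper subspace $\rho(\mathcal C_k)$ in $E^D$ is nonzero. We may therefore choose a nonzero $y\in E^D$ with $\sum_{v\in D}u(v)\cdot y(v)=0$ for every $u\in\mathcal C_k$.

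Next extend $y$ by zero to $z\in E^{\Vertices(\Theta_k)}$, setting $z(v)=y(v)$ for $v\in D$ and $z(v)=0$ otherwise. For every $u\in\mathcal C_k$ the vertices outside $D$ contribute nothing to the pairing, so
\[
 \langle u,z\rangle=\sum_{v\in\Vertices(\Theta_k)}u(v)\cdot z(v)=\sum_{v\in D}u(v)\cdot y(v)=0 .
\]
Hence $z\in\mathcal C_k^\perp$. Moreover $z\neq0$, since $y\neq0$, and $\supp z\subseteq D$, so $\abs{\supp z}\le\abs D\le\eta n_k$. This contradicts \eqref{code:distance} of \Cref{code:exist}, which requires every nonzero element of $\mathcal C_k^\perp$ to be nonzero at more than $\eta n_k$ vertices. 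Therefore $\rho$ is surjective.

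There is no real obstacle. The only points to check are that nondegeneracy of the form on the finite space $E^D$ gives the nonzero annihilating vector $y$, which is standard linear algebra over $\Ftwo$, and that the hypothesis $\abs D\le\eta n_k$, not merely $<$, is covered by the strict inequality in \eqref{code:distance}.
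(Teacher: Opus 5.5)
Your proof is correct and is essentially the paper's argument. A proper image is annihilated by some nonzero $y\in E^D$, by nondegeneracy of the form, and extending $y$ by zero gives a nonzero element of $\mathcal C_k^\perp$ supported in $D$, which contradicts the strict inequality in \eqref{code:distance}.
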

\begin{proof}
If the image were a proper subspace of $E^D$, a nonzero linear
functional on $E^D$ would vanish on that image.  By the nondegenerate
bilinear form on $E^D$, it would have the form
$u\mapsto\sum_{v\in D}z(v)\cdot u(v)$ for a nonzero $z\in E^D$.
Extending $z$ by zero outside $D$ would give a nonzero element of
$\mathcal C_k^\perp$ with support of size at most $\eta n_k$,
contrary to \eqref{code:distance}.
\end{proof}

From now on we fix a subset $A\subseteq\{K,K+1,\ldots\}$ and keep only
the graphs $\Theta_i$ with $i\in A$, together with their labels and
subspaces.  We write $n_i$ for the number of vertices of $\Theta_i$ and
$k_i=i$ for the dimension of $\mathcal C_i$ over $\Ftwo$.  The uniform
estimates just established apply to every such subset.

\section{The crossed product, the subgraphs, and the group \texorpdfstring{$H$}{H}}
\label{mod:section}

In this section, we define the compact group $\Omega$ and an equivariant
family of subgraphs $\mathcal G_\omega$ of a Cayley graph.  We prove that
restriction to a copy $P$ of $\Theta_i$ is uniformly distributed on
$\mathcal C_P$, so that the probability of $\omega|_P=w_P$ is
$2^{-k_i}$.  The proof extends functions over finite collections of
copies and then uses compactness.  We next describe the discrete
Pontryagin dual $V$ of $\Omega$, prove that $H=V\rtimes\Gamma$ is
finitely generated, and give an explicit presentation of $V$.

We first explain the operator-algebraic framework.  The operators
$dI-\Adj(\mathcal G_\omega)$ are representations of one
element $T$ of
$C(\Omega)\rtimes_r\Gamma\cong\Cr(V\rtimes\Gamma)$.
\Cref{mod:representations,mod:laplacians} allow uniform spectral
estimates for all $\omega$ to pass to this reduced group algebra, and
express its canonical trace as a Haar integral.  This section and the
next two develop the construction of \cite[Sections~4--6]{OpenAI2026}
using spaces of functions with values in $\Ftwo^m$ and arbitrary
dimensions $k_i$ over $\Ftwo$.

\subsection{Graph operators in a crossed product}
\label{sec:regular}

In this subsection, we carry out Step 1 of the strategy in general.  Let
$\Gamma$ be a group generated by a finite set $S$, and let $X=\Cay(\Gamma,S)$ be
its Cayley graph.  We assume that $X$ is a simple graph, that is, the
elements of $S^{\pm1}$ are nontrivial and pairwise distinct in $\Gamma$; for
the groups of \Cref{sec:labeling} this is part of \cref{geo:avoidance}.  Let $\Omega$ be a compact metrizable
abelian group, written additively, with Haar probability measure $\mu$, on
which $\Gamma$ acts by continuous automorphisms $\omega\mapsto g\omega$.  Let
$V$ be the Pontryagin dual of $\Omega$, the discrete abelian group of
continuous homomorphisms from $\Omega$ to the circle, written additively;
for $v\in V$ we write $\chi_v(\omega)$ for the value of $v$ at $\omega$.  The
group $\Gamma$ acts on $V$ by $\chi_{gv}(\omega)=\chi_v(g^{-1}\omega)$, and we
put $H=V\rtimes\Gamma$, with multiplication $(v,g)(v',g')=(v+gv',gg')$.

For $g\in\Gamma$, write $u_g$ for the unitary associated with $(0,g)$ in
the group algebra of $H$.  Fourier transformation on $V$ identifies
$\Cr(V)$ with $C(\Omega)$, taking the unitary associated with $v\in V$
to $\chi_v$.  If $f\in C(\Omega)$, the covariance relation is
\[
 u_g f u_g^*=\alpha_g(f),
 \qquad \alpha_g(f)(\omega)=f(g^{-1}\omega),
\]
so that $\Cr(H)\cong C(\Omega)\rtimes_r\Gamma$.  Elements of the commutative
subalgebra $C(\Omega)$ are functions on $\Omega$, but general elements of the
crossed product are not.  Instead, each point $\omega\in\Omega$ gives the
following representation $\pi_\omega$ on $\ell^2(\Gamma)$.  After the unitary
$\xi\mapsto\xi(\,\cdot\,^{-1})$ of $\ell^2(\Gamma)$, which turns the right
translations in \eqref{mod:rep-group} into left translations, $\pi_\omega$ is
the regular representation of the crossed product induced from evaluation
of functions at $\omega$.  The norm formula of \cref{mod:representations}
allows uniform estimates for these operators to pass to $\Cr(H)$, and its
trace formula reduces traces to integrals over $\Omega$.

\begin{lemma}
 \label{mod:representations}
 For every $\omega\in\Omega$ there is a representation $\pi_\omega$ of
 $\Cr(H)\cong C(\Omega)\rtimes_r\Gamma$ on $\ell^2(\Gamma)$ with formulas
 \begin{align}
  (\pi_\omega(f)\xi)(h)&=f(h^{-1}\omega)\xi(h),
       &&f\in C(\Omega),\label{mod:rep-function}\\
  (\pi_\omega(u_g)\xi)(h)&=\xi(hg),
       &&g\in\Gamma.\label{mod:rep-group}
 \end{align}
 For every $b\in\Cr(H)$,
 \begin{equation}
  \label{mod:norm}
  \norm b=\sup_{\omega\in\Omega}\norm{\pi_\omega(b)},
 \end{equation}
 so the direct sum of these representations is faithful.
 The canonical group trace satisfies
 \begin{equation}
  \label{mod:trace-integral}
  \tau_H(b)=\int_\Omega
       \langle\pi_\omega(b)\delta_1,\delta_1\rangle\,d\mu(\omega).
 \end{equation}
\end{lemma}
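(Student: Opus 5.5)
I will work throughout in the crossed product picture $C(\Omega)\rtimes_r\Gamma$. The reduced crossed product is defined by the induced regular representation $\widetilde\pi$ on $L^2(\Omega,\mu)\otimes\ell^2(\Gamma)$, where $C(\Omega)$ acts by multiplication and $\Gamma$ acts diagonally. Fourier transform on $V$ identifies $\ell^2(V)$ with $L^2(\Omega,\mu)$, so that $\ell^2(H)\cong L^2(\Omega)\otimes\ell^2(\Gamma)$. Under this identification $\widetilde\pi$ is the left regular representation of $H$. In particular, $\delta_1\in\ell^2(H)$ corresponds to $\one_\Omega\otimes\delta_1$.

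\textbf{Step 1: existence of $\pi_\omega$.} I first check that \eqref{mod:rep-function} and \eqref{mod:rep-group} define a covariant pair for $(C(\Omega),\Gamma,\alpha)$. The multiplication operators are clearly a $*$-representation. The right translations $\xi\mapsto\xi(\cdot\,g)$ form a unitary representation. For covariance, compute
\[
 (\pi_\omega(u_g)\pi_\omega(f)\pi_\omega(u_g)^*\xi)(h)=f(g^{-1}h^{-1}\omega)\xi(h)=(\pi_\omega(\alpha_g f)\xi)(h).
\]
This gives a representation of the full crossed product. To see that it factors through the reduced one, I compose with the unitary $\xi\mapsto\xi(\cdot^{-1})$ and recognize $\pi_\omega$ as $\operatorname{Ind}(\mathrm{ev}_\omega)$, the regular representation induced from the character $\mathrm{ev}_\omega$ of $C(\Omega)$. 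Induced regular representations are always bounded by the reduced norm. This gives $\sup_\omega\norm{\pi_\omega(b)}\le\norm b$.

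\textbf{Step 2: the reverse inequality in \eqref{mod:norm}.} This is the main point. The plan is to realize $\widetilde\pi$ as a direct integral of the $\pi_\omega$. Define a unitary
\[
 U\colon L^2(\Omega,\mu)\otimes\ell^2(\Gamma)\to L^2(\Omega,\mu;\ell^2(\Gamma)),
 \qquad (U\Phi)(\omega)(h)=\Phi(h^{-1}\omega,h),
\]
up to the inversion convention. This is unitary because $\mu$ is invariant under the automorphisms $h$. I then verify on generators that
\[
 U\widetilde\pi(b)U^*=\int^\oplus_\Omega\pi_\omega(b)\,d\mu(\omega),
\]
checking $f$ and $u_g$ separately. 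The map $\omega\mapsto\pi_\omega(b)$ is strongly continuous: this holds for generators by continuity of $f$ and of the action, and then for all $b$ by density and the uniform bound from Step 1. For a decomposable operator with strongly continuous fibres and a measure of full support, the norm equals the supremum over fibres. Haar measure has full support, and the essential supremum equals the supremum by lower semicontinuity of $\omega\mapsto\norm{\pi_\omega(b)}$. Hence $\norm b=\norm{\widetilde\pi(b)}\le\sup_\omega\norm{\pi_\omega(b)}$. Faithfulness of the direct sum follows immediately. I expect the bookkeeping of conventions in $U$ (left versus right, $h$ versus $h^{-1}$) to be the main obstacle; I would settle it first by checking $u_g$ against the formula $(v,g)(v',g')=(v+gv',gg')$.

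\textbf{Step 3: the trace formula \eqref{mod:trace-integral}.} Under $U$, the vector $\one_\Omega\otimes\delta_1$ becomes the section $\omega\mapsto\delta_1$. Therefore
\[
 \tau_H(b)=\langle\widetilde\pi(b)(\one\otimes\delta_1),\one\otimes\delta_1\rangle=\int_\Omega\langle\pi_\omega(b)\delta_1,\delta_1\rangle\,d\mu(\omega).
\]
As a sanity check, I would verify this directly on group elements $\chi_v u_g$. Both sides give $\delta_{g,1}\int\chi_v\,d\mu=\delta_{g,1}\delta_{v,0}$. The general case then follows by linearity and norm continuity: both sides are continuous, the right side by Step 1, since the integrand is bounded by $\norm b$ and measurable, being continuous in $\omega$ by Step 2.
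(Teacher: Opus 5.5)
Your proof is correct. Its skeleton is the paper's: Fourier transform in the $V$-variable, then a change of variables exhibiting $\lambda_H$ as a direct integral of the $\pi_\omega$. The difference is in how you obtain the two inequalities in \eqref{mod:norm}.

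The paper never uses the general fact that regular representations induced from non-faithful representations of $C(\Omega)$ are dominated by the reduced norm. It works on $\C[H]$ with finitely supported $\xi$. From the direct integral it gets $\norm{\lambda_H(b)}\le\sup_\omega\norm{\pi_\omega(b)}$. The reverse inequality it proves by hand. The function $\omega\mapsto\norm{\pi_\omega(b)\xi}$ is continuous, so a large value at $\omega_0$ persists on a nonempty open set $O$, which has positive Haar measure. The test vector $\mu(O)^{-1/2}\one_O\,\xi$ then shows $\norm{\lambda_H(b)}>t$, and each $\pi_\omega$ extends to $\Cr(H)$ by contractivity.

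That localization is exactly your ``essential supremum equals supremum'' remark in Step~2. In your arrangement it is superfluous, since for $\norm b\le\sup_\omega\norm{\pi_\omega(b)}$ the trivial bound $\operatorname{ess\,sup}\le\sup$ suffices. Running your Step~2 argument on $\C[H]$ first would let you drop the appeal to induced representations, keeping only the covariance check.

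What your Step~1 buys is a measure-free reason why a single $\pi_\omega\cong\mathrm{Ind}(\mathrm{ev}_\omega)$ factors through the reduced crossed product. The price is importing crossed-product theory and the abstract reduced norm, whereas the paper works only with $\lambda_H$. A small imprecision in your setup: the representation with $C(\Omega)$ acting by multiplication and $\Gamma$ acting diagonally is not literally $\mathrm{Ind}$ of the multiplication representation. It is only unitarily equivalent to it, by another use of the invariance of $\mu$.

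For the trace, you evaluate the direct integral on the constant section $\omega\mapsto\delta_1$, which covers all of $\Cr(H)$ at once. The paper instead computes the identity coefficient $f_1(\omega)$ on $\C[H]$ and extends by continuity. The two are equivalent, and your sanity check is the paper's computation.

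On the convention you flagged: with the right-shift formula \eqref{mod:rep-group}, the unitary that works inverts $h$ in both slots, $(U\Phi)(\omega)(h)=\Phi(h^{-1}\omega,h^{-1})$, as in the paper. Your displayed $\Phi(h^{-1}\omega,h)$ does not turn $\lambda_H(0,g)$ into a fibrewise operator. The version $\Phi(h\omega,h)$ works instead for the conjugated fibres $\mathrm{Ind}(\mathrm{ev}_\omega)$ with left shifts.
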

\begin{proof}
The displayed operators obey the covariance relation, and so initially
define representations of the algebraic crossed product
$\C[V]\rtimes\Gamma=\C[H]$.
We check that these representations are bounded for the reduced norm.

Fourier transformation $\ell^2(V)\to L^2(\Omega,\mu)$ takes the coordinate
vector at $v$ to $\chi_v$.  The characters are an orthonormal basis:
orthogonality follows from Haar invariance, and their span is dense by the
Stone--Weierstrass theorem.  Apply this transformation to the $V$-variable
of $\ell^2(V\times\Gamma)$, and then make the unitary change of variables
\[
  (U_0F)(\omega,h)=F(h^{-1}\omega,h^{-1}).
\]
It is unitary because the action on $\Omega$ preserves Haar probability.
Let $\lambda_H$ denote the left regular representation of $\Cr(H)$ on
$\ell^2(H)$, and let $L^2(\Omega,\mu;\ell^2(\Gamma))$ be the Hilbert space of
square-integrable functions from $\Omega$ to $\ell^2(\Gamma)$.  Under the
resulting unitary
$U:\ell^2(H)\to L^2(\Omega,\mu;\ell^2(\Gamma))$, the left regular action
of $(v,1)$ becomes multiplication by $\chi_v(h^{-1}\omega)$, and that of
$(0,g)$ becomes the right shift $\xi(h)\mapsto\xi(hg)$.
These statements follow either by substitution into
$(v,g)^{-1}(w,h)=(g^{-1}(w-v),g^{-1}h)$, or directly from the two
changes of variables.  Consequently, for $b\in\C[H]$ and
$F\in L^2(\Omega,\mu;\ell^2(\Gamma))$,
\[
 \bigl(U\lambda_H(b)U^{-1}F\bigr)(\omega)=\pi_\omega(b)\bigl(F(\omega)\bigr)
 \qquad\text{for almost every }\omega\in\Omega.
\]

It follows immediately that
$\norm{\lambda_H(b)}\le\sup_\omega\norm{\pi_\omega(b)}$.
For the reverse inequality, we use the fact that the action on a finitely
supported vector varies continuously with $\omega$.  A large norm in
one fiber therefore persists on a set of positive Haar measure.
Fix $\omega_0$ and a finitely supported unit vector
$\xi\in\ell^2(\Gamma)$.
The function $\omega\mapsto\norm{\pi_\omega(b)\xi}$ is continuous,
because $b$ is a finite sum and only finitely many coordinates occur.
If its value at $\omega_0$ exceeds $t$, it exceeds $t$ on some nonempty
open set $O\subseteq\Omega$.  Such an $O$ has positive Haar measure:
finitely many translates of $O$ cover the compact group $\Omega$.
The unit vector
$\mu(O)^{-1/2}\one_O(\omega)\xi$ in $L^2(\Omega,\mu;\ell^2(\Gamma))$ then
has image of norm greater than $t$.  Hence
$\norm{\lambda_H(b)}>t$.
Finitely supported vectors are dense, so this proves
\eqref{mod:norm} on $\C[H]$.  Each $\pi_\omega$ therefore extends
contractively to $\Cr(H)$, and norm approximation proves
\eqref{mod:norm} there as well.  Uniform approximation by finite sums of
characters also gives \eqref{mod:rep-function} for every $f\in C(\Omega)$.

Finally, if $b=\sum_g f_g u_g\in\C[H]$, its diagonal coefficient at
$\delta_1$ in the fiber is $f_1(\omega)$.  The Haar integral of $f_1$ is
the coefficient of the identity element of $H$, namely $\tau_H(b)$.
Both sides of \eqref{mod:trace-integral} are continuous in the reduced
norm: the diagonal functions converge uniformly under norm approximation,
by contractivity of all the $\pi_\omega$.  This proves the formula for
arbitrary $b\in\Cr(H)$.
\end{proof}

Now let $d\ge1$, and suppose that every $\omega\in\Omega$ determines a
subgraph $\mathcal G_\omega$ of $X$, with vertex set $\Gamma$, in which every
vertex has degree at most $d$.  We assume that
\begin{equation}
 \label{mod:equivariant}
 \mathcal G_{g\omega}=g\,\mathcal G_\omega
 \qquad(g\in\Gamma,\ \omega\in\Omega),
\end{equation}
and that for each $s\in S^{\pm1}$ the set of $\omega$ for which the edge
$\{1,s\}$ belongs to $\mathcal G_\omega$ is open and closed.  Let
$F_s\in C(\Omega)$ be the indicator function of this set.

\begin{lemma}
 \label{mod:laplacians}
 The element
 \[
  T=d\cdot1-\sum_{s\in S^{\pm1}}F_s\,u_s\ \in\ C(\Omega)\rtimes_r\Gamma
 \]
 is self-adjoint, and for every $\omega\in\Omega$,
 \[
  \pi_\omega(T)=dI-\Adj(\mathcal G_\omega).
 \]
 In particular, $0\le\pi_\omega(T)\le2dI$ for every $\omega$, and the spectrum
 of $T$ is contained in $[0,2d]$.  If each $F_s$ is a finite linear
 combination of the characters $\chi_v$, then $T$ lies in the group ring
 $\C[H]$.
\end{lemma}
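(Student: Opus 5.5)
The plan is to compute $\pi_\omega(T)$ directly from the formulas of \cref{mod:representations} and then read off the remaining assertions. By linearity and \eqref{mod:rep-function}--\eqref{mod:rep-group},
\[
 (\pi_\omega(F_su_s)\xi)(h)=F_s(h^{-1}\omega)\,\xi(hs).
\]
By the equivariance \eqref{mod:equivariant}, the edge $\{h,hs\}$ lies in $\mathcal G_\omega$ exactly when $\{1,s\}=h^{-1}\{h,hs\}$ lies in $h^{-1}\mathcal G_\omega=\mathcal G_{h^{-1}\omega}$, that is, exactly when $F_s(h^{-1}\omega)=1$. Since $X$ is simple, the neighbours of $h$ in $X$ are the distinct elements $hs$, $s\in S^{\pm1}$. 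Hence summing over $s$ gives $(\Adj(\mathcal G_\omega)\xi)(h)$, and so $\pi_\omega(T)=dI-\Adj(\mathcal G_\omega)$.

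\emph{Self-adjointness.} I would check the identity $(F_su_s)^*=F_{s^{-1}}u_{s^{-1}}$ in the crossed product. We have $(F_su_s)^*=u_{s^{-1}}F_s=\alpha_{s^{-1}}(F_s)u_{s^{-1}}$, and $\alpha_{s^{-1}}(F_s)(\omega)=F_s(s\omega)$. This is the indicator that $\{1,s\}\in\mathcal G_{s\omega}=s\mathcal G_\omega$, that is, that $\{s^{-1},1\}\in\mathcal G_\omega$, which is $F_{s^{-1}}(\omega)$. So the involution $s\mapsto s^{-1}$ permutes the summands of $T$, and $T=T^*$.

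\emph{Spectral bounds.} Each $\mathcal G_\omega$ has degree at most $d$, so its adjacency operator has norm at most $d$, by Schur's test with row and column sums at most $d$. Hence $0\le dI-\Adj(\mathcal G_\omega)\le 2dI$. Since $\bigoplus_\omega\pi_\omega$ is faithful by \eqref{mod:norm}, the spectrum of the self-adjoint element $T$ is contained in that of its faithful image. The image is a direct sum of operators with spectra in $[0,2d]$, so it satisfies the same bounds; concretely, $\|T-dI\|\le d$ by \eqref{mod:norm}.

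\emph{Group ring.} If each $F_s$ is a finite combination of characters $\chi_v$, then $F_su_s$ is a finite combination of the group elements $(v,s)$, so $T\in\C[H]$.

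There is no real obstacle; the only care needed is keeping the left/right conventions of $\pi_\omega$ consistent with the equivariance.
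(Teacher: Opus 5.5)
Your proof is correct and follows the paper's argument essentially step by step: the fiberwise computation via equivariance, the identity $\alpha_{s^{-1}}(F_s)=F_{s^{-1}}$ for self-adjointness, the degree bound on the adjacency operator, and the transfer of the bounds to $T$ through the norm formula \eqref{mod:norm}. The only cosmetic difference is that you obtain positivity of $dI-\Adj(\mathcal G_\omega)$ from $\norm{\Adj(\mathcal G_\omega)}\le d$ rather than from the explicit quadratic form, which is equally valid.
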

\begin{proof}
By \eqref{mod:rep-function} and \eqref{mod:rep-group},
$(\pi_\omega(F_su_s)\xi)(h)=F_s(h^{-1}\omega)\,\xi(hs)$.  By
\eqref{mod:equivariant}, $F_s(h^{-1}\omega)=1$ exactly when the edge $\{1,s\}$
belongs to $h^{-1}\mathcal G_\omega$, that is, when the edge $\{h,hs\}$ belongs to
$\mathcal G_\omega$.  Since distinct $s$ give distinct neighbors $hs$ of $h$,
summing over $s$ gives $\pi_\omega(T)=dI-\Adj(\mathcal G_\omega)$.  The adjoint
of $F_su_s$ is $u_{s^{-1}}F_s=\alpha_{s^{-1}}(F_s)\,u_{s^{-1}}$, and
$\alpha_{s^{-1}}(F_s)(\omega)=F_s(s\omega)$ is the indicator function of the
condition $\{1,s\}\in s\mathcal G_\omega$, that is,
$\{s^{-1},1\}\in\mathcal G_\omega$.  Hence $\alpha_{s^{-1}}(F_s)=F_{s^{-1}}$, and
$T$ is self-adjoint.  For a graph of degree at most $d$, the adjacency
operator has norm at most $d$ and $dI-\Adj$ is positive, since its
quadratic form is the sum of $(d-\deg h)\abs{\xi(h)}^2$ over the vertices
and of $\abs{\xi(h)-\xi(h')}^2$ over the edges $\{h,h'\}$.  This gives
$0\le\pi_\omega(T)\le2dI$, and \eqref{mod:norm} transfers these bounds to $T$.
The last assertion holds because $\chi_v$ is the image of the group
element $(v,1)$ of $H$.
\end{proof}

\subsection{The compact group \texorpdfstring{$\Omega$}{Omega} and the subgraphs}
\label{sec:omega}

In this subsection, we define $\Omega$ by linear conditions on the
restrictions of functions to the embedded graphs.  Conditions at both
endpoints of an edge then define $\mathcal G_\omega$.

Fix a subset $A\subseteq\{K,K+1,\ldots\}$ as at the end of
\Cref{sec:data}.  Write $n_i=\abs{\Vertices(\Theta_i)}$, and let
$\mathcal C_i\subseteq E^{\Vertices(\Theta_i)}$ be the subspace of
dimension $k_i=i$ over $\Ftwo$ chosen there, where $E=\Ftwo^m$ is the
fixed vector space of \Cref{sec:data}.
From now on, $\Gamma$, $S$, and $X$ are those of the geometric
construction:
\begin{align*}
 \Gamma&=\Gamma_A=\langle S\mid\text{labels of closed paths in }
                    \Theta_i,\ i\in A\rangle,\\
 X&=\Cay(\Gamma,S),
 \qquad P_{g,i}=g\varphi_i(\Theta_i).
\end{align*}
A function on the source graph is transported to a copy by its
identification $\iota_P=g\varphi_i$: a function $f$ on
$\Vertices(\Theta_i)$ gives the function $f\circ\iota_P^{-1}$ on
$P=P_{g,i}$.  In this way the subspace $\mathcal C_i$, its orthogonal
complement, the sets $J_i(v)$ of edge labels, and the reference function
$w_i$ give $\mathcal C_P,\mathcal C_P^\perp,J_P,w_P$ on every copy $P$.
All these orthogonal complements use the bilinear pairing
$\langle u,z\rangle=\sum_{h\in P}u(h)\cdot z(h)$ over $\Ftwo$.
We use the map
$\mathcal L:E\to\{J\subseteq S^{\pm1}:\abs J=d\}\cup\{\varnothing\}$
from \eqref{code:decoder}; thus $\mathcal L(w_P(h))=J_P(h)$.

The hypotheses are supplied by \cref{data:family,code:exist} and the
geometric lemmas.  Every nonzero function in either $\mathcal C_P$ or
$\mathcal C_P^\perp$ is nonzero at more than $\eta\abs P$ vertices.
For every root $R$ and copy $P$ with $c_P\ne R$ as in
\cref{geo:overlap}, that lemma supplies an induced intrinsic ball
$B_P$, which is a tree, satisfying
\begin{equation}
 \label{mod:ballbounds}
 \operatorname{radius}(B_P)\le\sigma(P)/16,
 \qquad \abs{B_P}<\eta\abs P.
\end{equation}
This ball contains all intersections of $P$ with different copies whose
cone points are no farther from $R$ than $c_P$.

Equip the group $E^\Gamma$ of functions $\Gamma\to E$ with pointwise
addition and the product topology.  It is compact, and $\Gamma$ acts on
it by translation,
$(g\omega)(h)=\omega(g^{-1}h)$.
Define
\begin{equation}
 \label{mod:omega}
 \Omega=
 \{\omega\in E^\Gamma:\omega|_P\in\mathcal C_P
       \text{ for every indexed copy }P\}.
\end{equation}
Each condition in \eqref{mod:omega} is a linear condition on finitely
many coordinates.  Thus $\Omega$ is a closed $\Ftwo$-linear subspace
of $E^\Gamma$, hence a compact abelian group.
Translation by $g$ carries the copy $P_{g',i}$, with its identification,
to $P_{gg',i}$ and carries $\mathcal C_{P_{g',i}}$ to
$\mathcal C_{P_{gg',i}}$.  Therefore $\Omega$ is invariant under the
action of $\Gamma$.  Denote its Haar probability measure by $\mu$.

Since every element of $\Omega$ has order at most two, each continuous
character of $\Omega$ takes values in $\{1,-1\}$.  We may therefore
identify its Pontryagin dual with
$\operatorname{Hom}_{\mathrm{cont}}(\Omega,\Ftwo)$ by assigning to
$v$ the character $\chi_v(\omega)=(-1)^{v(\omega)}$.  Put
\begin{equation}
 \label{mod:group}
 V=\operatorname{Hom}_{\mathrm{cont}}(\Omega,\Ftwo),
 \qquad H=H_A=V\rtimes\Gamma.
\end{equation}
The action on $V$ is $(gv)(\omega)=v(g^{-1}\omega)$.
Thus \Cref{sec:regular} applies to this $H$.  We prove its finite
generation and describe $V$ explicitly in \Cref{sec:dual}.

\begin{definition}
 \label{op:enabled}
 For $\omega\in\Omega$, let $\mathcal G_\omega$ be the subgraph of the
 Cayley graph $X$ with vertex set $\Gamma$ in which the edge $\{h,hs\}$
 is present exactly when
 \[
 s\in\mathcal L(\omega(h))
 \quad\text{and}\quad
 s^{-1}\in\mathcal L(\omega(hs)).
 \]
 A copy $P$ is \emph{complete} for $\omega$ if $\omega|_P=w_P$, and
 \emph{incomplete} otherwise.  Let $U_\omega$ be the set of vertices
 that do not belong to any complete copy.
\end{definition}

The graph $\mathcal G_\omega$ is an undirected simple subgraph of $X$,
and its degree is at most $d$, because each value of $\mathcal L$ is
either a $d$-element set or the empty set.
The identity $(g\omega)(gh)=\omega(h)$ gives
\eqref{mod:equivariant}.  Membership of the edge $\{1,s\}$ depends
only on $\omega(1)$ and $\omega(s)$, so the corresponding subset of
$\Omega$ is open and closed.

More explicitly, define
\[
 f_s(\omega)=
 \begin{cases}
 1,&s\in\mathcal L(\omega(1)),\\
 0,&s\notin\mathcal L(\omega(1)).
 \end{cases}
\]
Thus $f_s(\omega)=1$ means that the label $s$ of the oriented edge from
$1$ to $s$ belongs to the set specified at its initial vertex $1$.
The same edge, oriented from $s$ to $1$, has label $s^{-1}$.  Its label
belongs to the set specified at its initial vertex $s$ exactly when
$s^{-1}\in\mathcal L(\omega(s))$.
Since $(s^{-1}\omega)(1)=\omega(s)$, the function
$\alpha_s(f_{s^{-1}})(\omega)=f_{s^{-1}}(s^{-1}\omega)$ equals one
exactly when this second condition holds.  Hence the indicator function
of $\{\omega:\{1,s\}\in\mathcal G_\omega\}$ is
$F_s=f_s\alpha_s(f_{s^{-1}})$.
The covariance relation gives $F_su_s=f_su_sf_{s^{-1}}$, so
\cref{mod:laplacians} supplies the element
\begin{equation}
 \label{op:operator}
 T=d\one-\sum_{s\in S^{\pm1}}f_su_sf_{s^{-1}}
 \quad\text{in }\Cr(H).
\end{equation}
Since $X$ is simple by \cref{geo:avoidance},
\begin{equation}
 \label{op:fiber}
 \pi_\omega(T)=dI-\Adj(\mathcal G_\omega)
 \qquad(\omega\in\Omega).
\end{equation}
At the end of \Cref{sec:dual} we show that $T$ lies in $\C[H]$.

On a copy $P$ that is complete for $\omega$, the sets $\mathcal L(\omega(h))$ equal
$J_P(h)$.  Every edge of $P$ belongs to $\mathcal G_\omega$, and its
$d$ incident edges at each vertex exhaust the set specified there.
Thus $P$ is an entire connected component of $\mathcal G_\omega$.
Conversely, if a copy $P$ is a connected component of $\mathcal G_\omega$,
then $\mathcal L(\omega(h))$ contains the $d$ labels $J_P(h)$ at each
$h\in P$.  The two sets therefore agree.  Injectivity of
$J\mapsto a_J$ gives $\omega(h)=w_P(h)$, so $P$ is complete.

Distinct complete copies are disjoint.  If two met, both would equal
the same connected component.  Apply \Cref{geo:overlap} with root $R$
a common vertex.  The two cone points both have distance one from $R$,
so their common vertex set would lie in a ball of the first copy with
fewer than $\eta\abs P<\abs P$ vertices.  This is impossible for distinct
indexed copies.  In particular, no edge of $\mathcal G_\omega$ joins
$U_\omega$ to its complement.

\subsection{Uniform distribution of restrictions to each copy}
\label{sec:distribution}

In this subsection, we prove that every function in $\mathcal C_P$
extends to an element of $\Omega$.  The extension property of
\Cref{code:local-extension} handles the intersections of finitely many
copies, and compactness then handles all copies simultaneously.
Surjectivity of the restriction map makes its Haar distribution
uniform.

\begin{lemma}
 \label{mod:marginals}
 For a copy $P=P_{g,i}$, the restriction map
 \[
 \operatorname{res}_P:\Omega\longrightarrow E^P,
 \qquad\omega\longmapsto\omega|_P,
 \]
 has image $\mathcal C_P$ and sends Haar probability to uniform
 measure on $\mathcal C_P$.  Consequently, for every $a\in\mathcal C_P$,
 \begin{equation}
 \label{mod:probability}
 \mu\{\omega:\omega|_P=a\}=2^{-k_i}.
 \end{equation}
 In particular, $P$ is complete with probability $2^{-k_i}$.
\end{lemma}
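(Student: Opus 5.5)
The plan has two parts: a finite extension statement proved by induction, and a compactness argument upgrading it to all copies at once. The image of $\operatorname{res}_P$ lies in $\mathcal C_P$ by the definition \eqref{mod:omega}. Since $\operatorname{res}_P$ is a continuous group homomorphism, once it is onto $\mathcal C_P$ the pushforward of Haar measure is a translation-invariant probability measure on the finite group $\mathcal C_P$. That measure is therefore uniform, and each point receives mass $|\mathcal C_P|^{-1}=2^{-k_i}$. Taking $a=w_P\in\mathcal C_P$ gives the completeness probability. So everything reduces to surjectivity.

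\textbf{Finite extension claim.} Fix $u_0\in\mathcal C_P$ and let $\mathcal F$ be a finite set of indexed copies containing $P$. We want $\omega_{\mathcal F}\colon\bigcup\mathcal F\to E$ with $\omega_{\mathcal F}|_Q\in\mathcal C_Q$ for every $Q\in\mathcal F$ and $\omega_{\mathcal F}|_P=u_0$. Root $\Xhat$ at $R=c_P$ and induct on $|\mathcal F|$; the case $\mathcal F=\{P\}$ is trivial. Otherwise choose $Q\in\mathcal F\setminus\{P\}$ whose cone point is farthest from $R$. Then $c_Q\ne R$, and every other $Q'\in\mathcal F$ satisfies $d_{\Xhat}(c_{Q'},R)\le d_{\Xhat}(c_Q,R)$. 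By \cref{geo:overlap} and \eqref{mod:ballbounds}, the set $D=Q\cap\bigcup(\mathcal F\setminus\{Q\})$ lies in a ball $B_Q$ with $|B_Q|<\eta|Q|$. Apply the induction hypothesis to $\mathcal F\setminus\{Q\}$, which still contains $P$. The resulting function prescribes values on $D$. Transport along $\iota_Q$ and use \cref{code:local-extension} to find $u\in\mathcal C_Q$ agreeing with these values on $D$. Gluing gives a well-defined function on $\bigcup\mathcal F$ satisfying all the constraints, since $Q$ meets the rest only in $D$.

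\textbf{Compactness.} Enumerate the countably many indexed copies and let $\mathcal F_N$ be the first $N$ together with $P$. Extend each $\omega_{\mathcal F_N}$ by zero to an element of $E^\Gamma$. The set $K_N$ of $\omega\in E^\Gamma$ with $\omega|_P=u_0$ and $\omega|_Q\in\mathcal C_Q$ for all $Q\in\mathcal F_N$ is closed, nonempty, and decreasing in $N$. Compactness of $E^\Gamma$ gives a point in $\bigcap_N K_N$, which lies in $\Omega$ and restricts to $u_0$.

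The main obstacle is the inductive step. The overlap lemma must be applied with a single root fixed throughout, and the maximality choice must guarantee that all intersections of $Q$ with the remaining copies, not just some of them, fall in one small ball. Choosing the root $c_P$ handles this, since it makes $P$ never the farthest element while $|\mathcal F|\ge2$, so $P$ survives until the end. It also ensures that the condition $c_Q\neq R$ holds. A degenerate point is that the intersection may be empty, in which case any $u\in\mathcal C_Q$ works, for instance $u=0$.
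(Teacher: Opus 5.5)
Your proposal is correct and follows the paper's proof essentially step for step. You root $\Xhat$ at $c_P$, peel off the copy whose cone point is farthest, confine its overlaps to the small ball $B_Q$, extend by \cref{code:local-extension}, glue, and then finish with compactness and translation invariance of the pushforward measure. The only cosmetic difference is that you phrase compactness as a nested sequence over an enumeration of the countably many copies, whereas the paper appeals directly to the finite intersection property.
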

\begin{proof}
By definition, $\operatorname{res}_P(\Omega)\subseteq\mathcal C_P$.
Fix a copy $P_0$ and a function $a_0\in\mathcal C_{P_0}$.
We first show that, for every finite family $\mathcal F$ of distinct
copies containing $P_0$, there is a function on
$\bigcup_{P\in\mathcal F}P$ whose restriction to each $P$ belongs to
$\mathcal C_P$ and whose restriction to $P_0$ is $a_0$.

We argue by induction on $\abs{\mathcal F}$.  The assertion is
immediate for $\mathcal F=\{P_0\}$.  Otherwise root the coned-off graph
at $R=c_{P_0}$ and choose $Q\in\mathcal F\setminus\{P_0\}$ with
$d_{\Xhat}(c_Q,R)$ maximal.  Every other cone point in the family is
no farther from $R$ than $c_Q$.  Since $c_Q\ne R$, \Cref{geo:overlap}
and \eqref{mod:ballbounds} give
\[
 D_Q:=
 Q\cap\bigcup_{P\in\mathcal F\setminus\{Q\}}P
 \subseteq B_Q,
 \qquad
 \abs{D_Q}<\eta\abs Q.
\]
By induction there is a function on the union of the remaining copies
with the required restrictions.  Its values on $D_Q$ extend to a
function in $\mathcal C_Q$ by \Cref{code:local-extension}, transported
from the source graph to $Q$.  The two functions agree on their common
domain $D_Q$, so together they define the required function on the
whole union.

Extend the function constructed on the union of each finite family of
copies arbitrarily to $\Gamma$.
In the compact space $E^\Gamma$, the conditions
$\omega|_P\in\mathcal C_P$ and $\omega|_{P_0}=a_0$ are closed.
The preceding induction shows that these closed sets have the
finite intersection property.  Compactness therefore gives a function
$\omega\in\Omega$ with $\omega|_{P_0}=a_0$.
Thus the restriction map is surjective.

It is a continuous surjective homomorphism from the compact group
$\Omega$ to the finite additive group $\mathcal C_P$.
The pushforward of Haar probability is translation-invariant, so each
of its $\abs{\mathcal C_P}=2^{k_i}$ elements has the same mass.
This proves \eqref{mod:probability}, including the assertion for
$a=w_P$.
\end{proof}

The proof concerns the restriction to each individual copy.  The trace
calculation combines these marginal probabilities with the fact that
at most one complete copy contains the identity.

\subsection{The dual group and its presentation}
\label{sec:dual}

In this subsection, we prove that the group $H$ in \eqref{mod:group}
is finitely generated.  We then express $V$ as a quotient of a free
$\Ftwo[\Gamma]$-module.  This presentation will be used for the
finite-stage approximations in \Cref{sec:maximal}.

For $h\in\Gamma$ and $1\le j\le m$, define the continuous linear
functional
\[
 v_{h,j}:\Omega\longrightarrow\Ftwo,
 \qquad v_{h,j}(\omega)=\omega(h)_j,
 \qquad v_{*,j}=v_{1,j}.
\]
The action on the dual satisfies $gv_{h,j}=v_{gh,j}$.

\begin{proposition}\label{mod:finite-generation}
The $\Ftwo[\Gamma]$-module $V$ is generated by
$v_{*,1},\ldots,v_{*,m}$.  The group $H=V\rtimes\Gamma$ is generated
by $(v_{*,j},1)$, $1\le j\le m$, and $(0,s)$, $s\in S$.
\end{proposition}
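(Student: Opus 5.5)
The plan has three parts: show that every continuous functional on $\Omega$ depends on finitely many coordinates, show that such a functional is a finite combination of the $v_{h,j}$, and then translate into generators of $H$.

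\emph{Finitely many coordinates.} Let $v\colon\Omega\to\Ftwo$ be a continuous homomorphism. Its kernel $v^{-1}(0)$ is an open subgroup of $\Omega$ containing $0$. In the product topology inherited from $E^\Gamma$, a basic neighbourhood of $0$ has the form
\[
 \{\omega\in\Omega:\omega|_F=0\}
\]
for some finite set $F\subseteq\Gamma$. Hence $v$ vanishes on this subgroup $\Omega_F$. It follows that $v$ factors through the restriction map $\operatorname{res}_F\colon\Omega\to E^F$, because $\ker\operatorname{res}_F=\Omega_F$. So $v=\bar v\circ\operatorname{res}_F$ for an $\Ftwo$-linear functional $\bar v$ on the finite-dimensional subspace $\operatorname{res}_F(\Omega)\subseteq E^F$.

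\emph{Combination of coordinate functionals.} The subspace $\operatorname{res}_F(\Omega)$ sits inside the finite-dimensional space $E^F$, so linear algebra over $\Ftwo$ extends $\bar v$ to a functional on all of $E^F$. Any functional on $E^F$ has the form $x\mapsto\sum_{h\in F}\sum_{j=1}^m c_{h,j}x(h)_j$ with $c_{h,j}\in\Ftwo$. Composing with $\operatorname{res}_F$ gives
\[
 v=\sum_{h\in F}\sum_{j=1}^m c_{h,j}v_{h,j}.
\]
Since $gv_{h,j}=v_{gh,j}$, we have $v_{h,j}=h\,v_{*,j}$. Therefore $V$ is generated as an $\Ftwo[\Gamma]$-module by $v_{*,1},\ldots,v_{*,m}$.

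\emph{Generation of $H$.} In $H=V\rtimes\Gamma$ the conjugation formula is
\[
 (0,g)(v,1)(0,g)^{-1}=(gv,1).
\]
So conjugates of the elements $(v_{*,j},1)$ by elements of $\Gamma$, together with their products, give every element $(v,1)$ with $v\in V$. The elements $(0,s)$, $s\in S$, generate $\{0\}\times\Gamma$. Since $(v,g)=(v,1)(0,g)$, all of $H$ is generated.

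The only delicate point is the first step, that a continuous character factors through a finite restriction. This follows directly from the subspace topology and the fact that $\Ftwo$ is discrete, so I do not expect a real obstacle.
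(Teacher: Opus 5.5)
Your proposal is correct and follows the paper's proof essentially step for step. Openness of the kernel gives a finite $F\subseteq\Gamma$ through whose restriction map $v$ factors; the induced functional on $\operatorname{res}_F(\Omega)$ is extended to $E^F$ and written via coordinate functionals; and $v_{h,j}=hv_{*,j}$ together with $(0,h)(v_{*,j},1)(0,h)^{-1}=(hv_{*,j},1)$ gives both generation statements, with your remark $(v,g)=(v,1)(0,g)$ just making the paper's final step explicit.
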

\begin{proof}
Let $v:\Omega\to\Ftwo$ be a continuous linear functional.
Its kernel is open, so there is a finite set $F\subseteq\Gamma$ such
that $v(\omega)=0$ whenever $\omega|_F=0$.
Consequently $v$ factors through the finite-dimensional subspace
$\operatorname{res}_F(\Omega)\subseteq E^F$.
Extend the resulting linear functional to $E^F$ by extending a basis
of that subspace to a basis of $E^F$.
The extension is a linear combination of the scalar coordinate
functionals, so $v$ is a finite linear combination of the $v_{h,j}$.
Since $v_{h,j}=hv_{*,j}$, the module assertion follows.
Finally,
\[
 (0,h)(v_{*,j},1)(0,h)^{-1}=(hv_{*,j},1)
\]
gives the stated generators of the semidirect product.
\end{proof}

Let
\[
 E^{(\Gamma)}
 =\{y:\Gamma\to E:\supp y\text{ is finite}\}
 \cong\bigl(\Ftwo[\Gamma]\bigr)^m.
\]
Its standard scalar basis consists of $e_{h,j}$, where $e_{h,j}(h)$
is the $j$th standard basis vector of $E$ and $e_{h,j}$ vanishes
elsewhere.  For $y\in E^{(\Gamma)}$ and $\omega\in E^\Gamma$, put
\[
 \langle y,\omega\rangle
 =\sum_{h\in\Gamma}y(h)\cdot\omega(h)\in\Ftwo.
\]
Extend the functions in $\mathcal C_P^\perp$ by zero outside $P$, and
form the algebraic sum
\begin{equation}
 \label{mod:relations}
 W=\sum_{g\in\Gamma,\ i\in A}\mathcal C_{P_{g,i}}^\perp
 \subseteq E^{(\Gamma)}.
\end{equation}
Each element of $W$ is a sum of finitely many such functions.
Translation carries the summand on $P_{g,i}$ to the summand on
$P_{hg,i}$, so $W$ is a $\Ftwo[\Gamma]$-submodule.

\begin{lemma}\label{mod:functions}
A function $\omega\in E^\Gamma$ lies in $\Omega$ if and only if
$\langle z,\omega\rangle=0$ for every $z\in W$.
The homomorphism
\[
 E^{(\Gamma)}\longrightarrow V,
 \qquad
 y\longmapsto\bigl(\omega\longmapsto\langle y,\omega\rangle\bigr)
\]
is surjective with kernel $W$.  Hence it induces an isomorphism
\begin{equation}\label{mod:presentation}
 V\cong E^{(\Gamma)}/W
 \cong\bigl(\Ftwo[\Gamma]\bigr)^m/W
\end{equation}
of $\Ftwo[\Gamma]$-modules, taking $[e_{h,j}]$ to $v_{h,j}$.
Under this identification, the associated character is
$\chi_{[y]}(\omega)=(-1)^{\langle y,\omega\rangle}$.
\end{lemma}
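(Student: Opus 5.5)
The plan is to treat the pairing $\langle y,\omega\rangle$ as a perfect duality between the discrete space $E^{(\Gamma)}$ and the compact space $E^\Gamma$. From this duality, $\Omega$ is the annihilator of $W$, and $V$ is the quotient of $E^{(\Gamma)}$ by the annihilator of $\Omega$.

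\emph{First assertion.} Let $\omega\in E^\Gamma$. For a copy $P$, the condition $\omega|_P\in\mathcal C_P$ holds if and only if $\omega|_P$ pairs to zero with every element of $\mathcal C_P^\perp$. This uses nondegeneracy of the form on $E^P$ and finite dimensionality, which give $(\mathcal C_P^\perp)^\perp=\mathcal C_P$. Extending elements of $\mathcal C_P^\perp$ by zero does not change their pairing with $\omega$. Since every $z\in W$ is a finite sum of such extensions and the pairing is bilinear, we get $\omega\in\Omega$ if and only if $\langle z,\omega\rangle=0$ for all $z\in W$.

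\emph{The map $E^{(\Gamma)}\to V$.} The map $y\mapsto\langle y,\cdot\rangle|_\Omega$ is well defined, because each $y$ has finite support, so its functional depends on finitely many coordinates and is continuous. It is $\Ftwo$-linear and $\Gamma$-equivariant, since $\langle hy,\omega\rangle=\langle y,h^{-1}\omega\rangle$. It sends $e_{h,j}$ to $v_{h,j}$. Surjectivity then follows from \Cref{mod:finite-generation}, or directly from its proof, since every continuous functional is a finite combination of the $v_{h,j}$. By the first assertion, $W$ lies in the kernel.

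\emph{Kernel contained in $W$.} This is the main step. Suppose $y\in E^{(\Gamma)}$ satisfies $\langle y,\omega\rangle=0$ for all $\omega\in\Omega$, and suppose $y\notin W$. Let $F=\supp y$. I would exhibit some $\omega\in E^\Gamma$ that annihilates $W$ but has $\langle y,\omega\rangle=1$. By the first assertion such an $\omega$ lies in $\Omega$, which is a contradiction.

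To build $\omega$, work in the $\Ftwo$-vector space $E^{(\Gamma)}$. Since $y\notin W$, there is a linear functional $\phi$ on $E^{(\Gamma)}$ that vanishes on $W$ and has $\phi(y)=1$. Such a $\phi$ exists purely algebraically: extend a basis of $W$ together with $y$ to a basis of $E^{(\Gamma)}$, using Zorn's lemma or a countable basis, since $\Gamma$ is countable. Every linear functional on $E^{(\Gamma)}\cong\bigoplus_\Gamma E$ is of the form $\langle\cdot,\omega\rangle$ for a unique $\omega\in E^\Gamma$, namely $\omega(h)_j=\phi(e_{h,j})$. This identification of the algebraic dual of a direct sum with the product is the key point. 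It means no compactness or closure argument is needed, because $W$ is automatically "closed" in the algebraic sense.

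This $\omega$ annihilates $W$, so $\omega\in\Omega$, and $\langle y,\omega\rangle=1$, contradicting $y$ being in the kernel. Hence the kernel is exactly $W$, and we obtain the isomorphism \eqref{mod:presentation}. It is $\Ftwo[\Gamma]$-linear by the equivariance noted above.

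\emph{Character formula.} The formula $\chi_{[y]}(\omega)=(-1)^{\langle y,\omega\rangle}$ is just the identification $\chi_v=(-1)^{v}$ from \eqref{mod:group}.
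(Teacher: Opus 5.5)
Your proposal is correct and follows essentially the same route as the paper: $(\mathcal C_P^\perp)^\perp=\mathcal C_P$ on each copy gives the first assertion, and surjectivity comes from \Cref{mod:finite-generation}. For the reverse kernel inclusion, both you and the paper use an algebraic functional vanishing on $W$ with value $1$ at $y$, realized as $\omega(h)_j=\phi(e_{h,j})$; the paper merely phrases this functional on the quotient $E^{(\Gamma)}/W$ and argues directly rather than by contradiction.
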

\begin{proof}
By the nondegenerate form on $E^P$, the condition
$\langle z,\omega\rangle=0$ for every $z\in\mathcal C_P^\perp$ is
equivalent to
$\omega|_P\in(\mathcal C_P^\perp)^\perp=\mathcal C_P$.
Applying this to all copies proves the first assertion.
Surjectivity of the displayed homomorphism follows from the
finite-coordinate description in \Cref{mod:finite-generation}.
The first assertion gives one inclusion for its kernel, namely that
every element of $W$ vanishes on $\Omega$.

Conversely, let $y\notin W$.  Choose a linear functional
$\ell:E^{(\Gamma)}/W\to\Ftwo$ with $\ell([y])=1$.
Such a functional exists by extending the nonzero vector $[y]$ to a
basis.  Define $\omega:\Gamma\to E$ by
$\omega(h)_j=\ell([e_{h,j}])$.
Then $\langle z,\omega\rangle=\ell([z])$ for every
$z\in E^{(\Gamma)}$, so $\omega$ annihilates $W$ and belongs to
$\Omega$ by the first assertion.  But
$\langle y,\omega\rangle=1$, showing that $y$ does not belong to the
kernel.  This proves \eqref{mod:presentation}.
The formula $\langle gy,g\omega\rangle=\langle y,\omega\rangle$
proves equivariance, and the character formula follows from the
identification in \eqref{mod:group}.
\end{proof}

Finally, the functions $f_s$ in \eqref{op:operator} belong to
$\C[V]$.  For $a\in E$, their constituent indicator functions have the
finite Fourier expansion
\[
 \one_{\{\omega(1)=a\}}(\omega)
 =
 2^{-m}\sum_{t\in E}(-1)^{t\cdot a}
 \chi_{\sum_{j=1}^m t_jv_{*,j}}(\omega).
\]
Indeed, the sum is $2^{-m}\sum_{t\in E}
(-1)^{t\cdot(\omega(1)-a)}$, which is one when $\omega(1)=a$ and
zero otherwise.
Each $f_s$ is a finite sum of these indicator functions, so
$T\in\C[H]\subseteq\Cr(H)$ by \cref{mod:laplacians}.

\section{The uniform spectral gap}
\label{op:section}

In this section, we carry out Step 5 of the strategy: we prove that the
operator $T\in\C[H]$ of \eqref{op:operator} has a spectral gap,
$\spec(T)\subseteq\{0\}\cup[\eps,2d]$.  After the constants in
\Cref{sec:data} are fixed, $\eps>0$ is independent of the subfamily
and of $\omega$ (\cref{op:gap}).  By \eqref{op:fiber},
$\pi_\omega(T)=dI-\Adj(\mathcal G_\omega)$ is the graph Laplacian plus
multiplication by the nonnegative function
$h\mapsto d-\deg_{\mathcal G_\omega}(h)$.  By
\eqref{mod:norm}, it suffices to prove a uniform gap for these operators.
Their kernels consist of the functions that are
constant on each complete copy and vanish elsewhere.  On a complete copy,
the gap comes from the expansion of the relator graph.  On an incomplete
copy $P$, the functions $\omega|_P$ and $w_P$ differ at more than
$\eta\abs P$ vertices, forcing many edges of $P$ to be missing from
$\mathcal G_\omega$.  We estimate this loss first inside one copy
(\Cref{op:local}), and then on the whole complement of the complete
copies, using the overlap estimates of \Cref{sec:geometry}
(\Cref{op:global}); \Cref{op:common} combines the two estimates.

We choose the constants
\begin{equation}
 \label{op:constants}
 \eta=1/(100m),
 \qquad c=\frac\eta2\min\{\kappa,1\},
 \qquad \sigma_i>\max\{16,20/c\},
\end{equation}
where every relator graph has edge expansion
$\abs{\partial B}\ge\kappa\min\{\abs B,n_i-\abs B\}$.

\subsection{Degree defects on individual copies}
\label{op:local}

In this subsection, we show that on an incomplete copy the subgraph
$\mathcal G_\omega$ misses a definite proportion of the edges at every set
of vertices (\cref{op:localdeficit}).

For $B\subseteq P$, let $e_{\omega,P}(B)$ denote the number of edges of
$P$ that lie in $\mathcal G_\omega$ and have both endpoints in $B$, and let
$\partial_PB$ be the edge boundary of $B$ in $P$, the set of edges of $P$
with exactly one endpoint in $B$.  The
\emph{degree defect} $d\abs B-2e_{\omega,P}(B)$ counts incidences: it is the
number of pairs $(h,e)$ with $h\in B$ and $e$ an edge of $P$ at $h$ that is
not an edge of $\mathcal G_\omega$ with both endpoints in $B$.  An edge of
$P$ leaving $B$ contributes once, and an edge of $P$ with both endpoints in
$B$ that is missing from $\mathcal G_\omega$ contributes twice.  Expansion
and the lower bound on the number of vertices where $\omega|_P$ differs
from $w_P$ control these two kinds of incidences.

\begin{lemma}
 \label{op:localdeficit}
 If $P$ is incomplete for $\omega$, then every $B\subseteq P$ satisfies
 \begin{equation}
  \label{op:localineq}
  d\abs B-2e_{\omega,P}(B)\ge c\abs B.
 \end{equation}
\end{lemma}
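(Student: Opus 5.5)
The plan is to work directly with the incidence description of the degree defect given just before the statement. The quantity $d\abs B-2e_{\omega,P}(B)$ is the number of pairs $(h,e)$ with $h\in B$ and $e$ an edge of $P$ at $h$ that is not an edge of $\mathcal G_\omega$ with both endpoints in $B$. Two kinds of pairs contribute:
\begin{enumerate}[label=(\alph*)]
\item boundary edges $e\in\partial_PB$, which are counted once whether or not they lie in $\mathcal G_\omega$;
\item edges of $P$ at vertices of $B$ that are missing from $\mathcal G_\omega$.
\end{enumerate}
Let $M\subseteq P$ be the set of vertices where $\omega|_P$ and $w_P$ differ. Since $P$ is incomplete, $\omega|_P-w_P$ is a nonzero element of $\mathcal C_P$, so $\abs M>\eta\abs P$ by \eqref{code:distance}.

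The first step is the pointwise fact from Step~4: every $h\in M$ is incident to at least one edge of $P$ missing from $\mathcal G_\omega$. I would check it as follows. At $h\in M$ we have $\omega(h)\ne w_P(h)=a_{J_P(h)}$. By injectivity of $J\mapsto a_J$, the set $\mathcal L(\omega(h))$ is either empty or a $d$-element set different from $J_P(h)$. Since $\abs{J_P(h)}=d$, some label $s\in J_P(h)$ lies outside $\mathcal L(\omega(h))$, and the edge $\{h,hs\}$ of $P$ is then absent from $\mathcal G_\omega$ by \cref{op:enabled}. For each $h\in M\cap B$, choosing one such missing edge gives a pair $(h,e)$ counted in the defect. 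These pairs are distinct for distinct $h$, so
\[
 d\abs B-2e_{\omega,P}(B)\ \ge\ \abs{M\cap B}\ \ge\ \abs M-(\abs P-\abs B)\ >\ \eta\abs P-x,
 \qquad x:=\abs P-\abs B.
\]
In addition, the defect is at least $\abs{\partial_PB}\ge\kappa\min\{\abs B,x\}$ by the expansion property in \cref{data:family}(ii).

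The proof then splits into cases.
\begin{enumerate}[label=(\roman*)]
\item If $\abs B\le\abs P/2$, then $\min\{\abs B,x\}=\abs B$, and the defect is at least $\kappa\abs B\ge c\abs B$, since $c\le\eta\kappa/2\le\kappa$.
\item If $\abs B>\abs P/2$ and $x\le\eta\abs P/2$, the first bound gives a defect greater than $\eta\abs P/2\ge(\eta/2)\abs B\ge c\abs B$.
\item If $\abs B>\abs P/2$ and $x>\eta\abs P/2$, then $\min\{\abs B,x\}=x$, so expansion gives a defect of at least $\kappa x>\kappa\eta\abs P/2\ge(\kappa\eta/2)\abs B\ge c\abs B$.
\end{enumerate}
In every case \eqref{op:localineq} holds. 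The case $B=\varnothing$ is trivial.

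I expect no real obstacle. The only point needing care is the bookkeeping in the first step: each $h\in M\cap B$ must supply a pair that is genuinely counted by the defect. It does, because a missing edge is counted regardless of whether its other endpoint lies in $B$.
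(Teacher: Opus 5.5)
Your proof is correct and follows the paper's argument. You use the same two lower bounds $\abs{M\cap B}$ and $\abs{\partial_PB}$ for the defect, together with $\abs M>\eta\abs P$ from \eqref{code:distance}. The only difference is cosmetic: the paper splits into two cases according to whether $\abs P-\abs B\ge\eta\abs P/2$, which merges your cases (i) and (iii) via $\min\{\abs B,\abs P-\abs B\}\ge(\eta/2)\abs B$.
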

\begin{proof}
Write $n=\abs P$ and
$M=\{h\in P:\omega(h)\ne w_P(h)\}$.
By the definition \eqref{mod:omega} of $\Omega$, $\omega|_P$
belongs to $\mathcal C_P$; so does $w_P$.
Their difference is a nonzero function in $\mathcal C_P$ on an
incomplete copy.  Equation~\eqref{code:distance} therefore gives
$\abs M>\eta n$.
At a vertex $h\in M$, the set $\mathcal L(\omega(h))$ is either empty or a
$d$-element set different from $J_P(h)$.  This uses the
injectivity of $J\mapsto a_J$.  At least one edge of $P$ at each such vertex is therefore missing from
$\mathcal G_\omega$.

Put $b=\abs B$.  Counting the edges of $P$ at vertices of $B$ that are
not internal edges of $\mathcal G_\omega$ gives both bounds
\begin{align*}
 db-2e_{\omega,P}(B)&\ge\abs{M\cap B},\\
 db-2e_{\omega,P}(B)&\ge\abs{\partial_P B}.
\end{align*}
If $n-b\ge\eta n/2$, then
$\min\{b,n-b\}\ge(\eta/2)b$, and expansion gives
$\abs{\partial_P B}\ge(\eta\kappa/2)b\ge cb$.
If $n-b<\eta n/2$, then
$\abs{M\cap B}\ge\abs M-(n-b)>\eta n/2\ge(\eta/2)b\ge cb$.
The case $B=\varnothing$ is immediate.
\end{proof}

\subsection{A global degree-defect estimate}
\label{op:global}

In this subsection, we extend the degree-defect estimate from single copies
to every finite set $Y\subseteq U_\omega$ (\cref{op:globaldeficit}).
Recall that a copy $P$ is complete for $\omega$ when $\omega|_P=w_P$.
Such a copy is an entire connected component of $\mathcal G_\omega$,
with all its edges present.  Thus $Y\subseteq U_\omega$ means that no
vertex of $Y$ belongs to any such copy.
Directly adding the local degree-defect inequalities of \Cref{op:local} would count overlapping vertices
repeatedly.  We instead count independent cycles of $\mathcal G_\omega$,
assign each contribution to vertices outside an overlap tree, and use
Euler's formula to recover the edge bound.

By \Cref{geo:cycle-sum}, every finitely supported cycle in $X$ is uniquely
a sum of cycles in copies.  We consider the copies in order of
decreasing distance of their cone points from a fixed base point $R$.  By
the overlap lemma, a copy $P$ meets the copies that come later in this
order only inside the tree $B_P$.  This allows us to bound the dimension
contributed by $P$ in terms of the number of vertices of $Y$ that lie in
$P$ but not in $B_P$.  These vertex sets are pairwise disjoint for
different copies, so the bounds can be added.  Additional cycles can
occur when a connected component outside the overlap tree has more than
one edge joining it to that tree;
large girth bounds the number of such attachments.

\begin{lemma}
 \label{op:globaldeficit}
 For every finite $Y\subseteq U_\omega$, let $\mathcal A$ be the graph
 with vertex set $Y$ and all edges of $\mathcal G_\omega$ joining two vertices of $Y$.  Then
 \begin{equation}
  \label{op:globalineq}
  d\abs Y-2\abs{\Edges(\mathcal A)}\ge(c/2)\abs Y.
 \end{equation}
\end{lemma}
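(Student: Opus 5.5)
The proof combines Euler's formula on $\mathcal A$ with the unique cycle decomposition of \cref{geo:cycle-sum}. Let $\beta(\mathcal A)$ be the number of connected components of $\mathcal A$ and $Z_1(\mathcal A)=Z_1(\mathcal A;\Ftwo)$ its cycle space. Euler's formula reads $\abs{\Edges(\mathcal A)}=\abs Y-\beta(\mathcal A)+\dim Z_1(\mathcal A)$. Hence \eqref{op:globalineq} is equivalent to the dimension bound
\[
 2\dim Z_1(\mathcal A)\le (d-2-c/2)\abs Y+2\beta(\mathcal A).
\]
I would prove the slightly stronger bound $2\dim Z_1(\mathcal A)\le(d-2-c/2)\abs Y$, dropping the $\beta$ term. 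The plan is to bound $\dim Z_1(\mathcal A)$ copy by copy, charging each copy only to vertices of $Y$ that no other copy is charged to.

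\emph{Step 1: ordering and filtration.} Fix a group vertex $R$. By \cref{geo:cycle-sum}, each $z\in Z_1(\mathcal A)\subseteq Z_1(X;\Ftwo)$ decomposes uniquely as $\sum_P z_P$ with $z_P\in Z_1(P;\Ftwo)$, and only finitely many copies occur for the finite set $Y$. List these copies $P_1,P_2,\dots$ in order of nonincreasing $d_{\Xhat}(c_{P},R)$. Filter $Z_1(\mathcal A)$ by the subspaces in which the components $z_{P_1},\dots,z_{P_{j-1}}$ vanish. Then $\dim Z_1(\mathcal A)\le\sum_j\dim I_j$, where $I_j$ is the space of components $z_{P_j}$ occurring in the $j$-th filtration step. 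Every later copy has cone point no farther from $R$, so by \cref{geo:overlap} and \cref{geo:overlap-tree} it meets $P_j$ only inside the tree $B_j=B_{P_j}$. An edge of $z_{P_j}$ outside $B_j$ cannot cancel against a later component, hence lies in $\mathcal A$. So $I_j$ sits inside the cycle space of the graph formed by the tree $B_j$ together with the edges of $\mathcal A\cap P_j$ that have an endpoint in $Y_j:=(Y\cap P_j)\setminus B_j$.

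\emph{Step 2: one copy.} Contract the tree $B_j$ to a point $b_j$; this does not change the cycle space. Let $e^{\mathrm{in}}_j$ be the number of edges of $\mathcal A$ inside $Y_j$ and $e^{\partial}_j$ the number joining $Y_j$ to $B_j$. The resulting graph has at most $\abs{Y_j}+1$ vertices and $e^{\mathrm{in}}_j+e^{\partial}_j$ edges, so $\dim I_j\le e^{\mathrm{in}}_j+e^{\partial}_j-\abs{Y_j}$ plus a correction from cycles through $b_j$. Since $Y_j\subseteq U_\omega$ and $Y_j\neq\varnothing$ whenever $P_j$ contributes, the copy $P_j$ is incomplete. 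Then \cref{op:localdeficit} applied to $B=Y_j$ gives $2e^{\mathrm{in}}_j+e^{\partial}_j\le(d-c)\abs{Y_j}$. An edge to $B_j$ costs only one incidence but one cycle dimension, so the naive count leaves a loss of order $e^{\partial}_j/2$. This is where large girth enters. A component of $P_j[Y_j]$ with $a\ge2$ attaching edges yields $a-1$ independent cycles through the tree, each of length at least $\sigma(P_j)$. Moreover $B_j$ has radius at most $\sigma/16$. So I would show that these cycles force roughly $\sigma/2$ vertices of $Y_j$ per extra attachment, giving at most about $2\abs{Y_j}/\sigma\le(c/10)\abs{Y_j}$ excess attachments by $\sigma>20/c$. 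The target per copy is then $2\dim I_j\le(d-2-c/2)\abs{Y_j}$, using $d=6$ so that $d-2$ absorbs the $e^{\partial}$ slack.

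\emph{Step 3: summation.} The sets $Y_j$ are pairwise disjoint. Indeed, a vertex of $P_j$ lying in a later copy lies in $B_j$, so each vertex of $Y$ is counted at most once, namely in the last copy containing it in which it lies outside the overlap ball. Summing the per-copy bounds gives $2\dim Z_1(\mathcal A)\le(d-2-c/2)\abs Y$, and Euler's formula then finishes the proof.

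The main obstacle is Step 2, the attachment count. One must show that each extra edge from a component of $Y_j$ into the tree $B_j$ is paid for by about $\sigma/2$ vertices of that component. This is a girth argument: two attachments joined through a tree of radius at most $\sigma/16$ and through the component form a cycle of length at least $\sigma$. I would first try to prove it by taking a spanning forest of each component and comparing the tree paths between consecutive attachment points. The secondary subtlety is to check, via \cref{geo:cycle-sum}, that the filtration quotients embed in the contracted cycle spaces, so that cancellation of edges inside $B_j$ by later copies causes no problem.
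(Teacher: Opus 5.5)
\textbf{There is a genuine gap, in your Step 2.} Your Steps 1 and 3 match the paper's proof: the filtration coming from \cref{geo:cycle-sum}, the localization of $\operatorname{pr}_{P_j}(L_j)$ to the tree $B_j$ together with $\mathcal A\cap P_j$, and the charging to the disjoint sets $Y_j$. Your target $\dim Z_1(\mathcal A)\le(d/2-1-c/4)\abs Y$ is also exactly the paper's. The problem in Step 2 is not only the excess-attachment count, and there are two issues.

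\emph{First issue.} \cref{op:localdeficit} applied to $Y_j$ gives only $2e^{\mathrm{in}}_j\le(d-c)\abs{Y_j}$. The attaching edges are among the incidences counted by the defect $d\abs B-2e_{\omega,P}(B)$. So $e^\partial_j$ cannot be split off while keeping the lower bound $c\abs{Y_j}$.

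\emph{Second, more serious issue.} Let $D_j$ be $B_j$ together with $\mathcal A\cap P_j$, and let $C$ run over the components of $D_j$ minus $B_j$. The exact count is
\[
 b_1(D_j)=e^{\mathrm{in}}_j+e^\partial_j-\abs{Y_j}+\#\{C:l_C=0\}
 =\sum_C\bigl(\abs{\Edges(C)}-\abs C+1\bigr)+\sum_C\max\{0,l_C-1\}.
\]
The correction term comes from unattached components, not from cycles through $b_j$. Once you bound $e^{\mathrm{in}}_j$ by a deficit estimate for all of $Y_j$, the leftover error has the size of $\#\{C\}$, the number of components. Girth controls only $\sum_C\max\{0,l_C-1\}$, not $\#\{C\}$.

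Consider a component consisting of one vertex joined to $B_j$ by a single edge. It contributes nothing to $b_1(D_j)$. Yet your chain charges it an amount bounded below independently of $c$. The total slack is only $(c/2)\abs{Y_j}$ in the normalization $2\dim I_j\le(d-2-c/2)\abs{Y_j}$. Such components can make up a fixed fraction of $Y_j$, so the chain does not close. The value $d=6$ plays no role here.

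\textbf{The missing idea} is to apply \cref{op:localdeficit} to each component $C$ separately, and to treat acyclic and cyclic components differently.
\begin{itemize}
\item If $C$ is acyclic, then $b_1(C)=0$ outright.
\item If $C$ contains a cycle, it has at least $\sigma=\sigma(P_j)$ vertices, so the $+1$ is absorbed. Because $\mathcal A$ contains every edge of $\mathcal G_\omega$ between vertices of $Y$, one has $e_{\omega,P_j}(C)=\abs{\Edges(C)}$. Then \cref{op:localdeficit} gives $b_1(C)\le(d/2-1-c/2+1/\sigma)\abs C$.
\end{itemize}
Your girth argument is correct and gives $\max\{0,l_C-1\}\le(4/\sigma)\abs C$. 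It works because a path in $C$, two attaching edges, and a tree path of length at most $2r\le\sigma/8$ close up to a cycle, so attachment endpoints are more than $\sigma/2$ apart in $C$. Finally, $5/\sigma<c/4$ yields $b_1(D_j)\le(d/2-1-c/4)\abs{Y_j}$ for each incomplete copy, which is what your Step 3 sums.
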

\begin{proof}
For a finite graph $B$, write
$b_1(B)=\dim_{\Ftwo}Z_1(B;\Ftwo)$, including isolated vertices in $B$.
Writing $\pi_0(\mathcal A)$ for the set of connected components of
$\mathcal A$, Euler's identity gives
\[
 \abs{\Edges(\mathcal A)}
   =\abs Y-\abs{\pi_0(\mathcal A)}+b_1(\mathcal A).
\]
Set $\beta=d/2-1-c/4>0$.  It is enough to prove
\begin{equation}
 \label{op:ranktarget}
 b_1(\mathcal A)\le\beta\abs Y.
\end{equation}

\emph{Step 1: the cycle-space decomposition.}
Regard $L=Z_1(\mathcal A;\Ftwo)$ as a subspace of
$Z_1(X;\Ftwo)$.  By \cref{geo:cycle-sum},
\[
 Z_1(X;\Ftwo)=\bigoplus_P Z_1(P;\Ftwo),
\]
an internal algebraic direct sum; write $\operatorname{pr}_P$ for the
projection onto the summand $Z_1(P;\Ftwo)$, so that $\operatorname{pr}_Pz$
is the summand of $z$ supported on $P$.  This summand may contain edges
outside $\mathcal A$; these edges cancel in the sum over the copies.
Only finitely many copies $P$ have $\operatorname{pr}_P(L)\ne0$:
take a basis of the finite-dimensional space $L$; each basis vector has
nonzero projections onto only finitely many summands.
Call the corresponding copies $P_1,\ldots,P_N$, ordered by nonincreasing
distance of their cone points from a fixed group vertex $R$.
Ties may be resolved arbitrarily.  For $1\le j\le N+1$, put
\[
 L_j=\{z\in L:\operatorname{pr}_{P_i}z=0\text{ for }i<j\}.
\]
Since $L_1=L$, $L_{N+1}=0$, and
$L_{j+1}=\ker(\operatorname{pr}_{P_j}|_{L_j})$, rank-nullity gives
\begin{equation}
 \label{op:rankdecomposition}
 \dim L=\sum_{j=1}^N\dim\operatorname{pr}_{P_j}(L_j).
\end{equation}
If $L=0$, the desired estimate follows directly; the following argument
also allows its empty sum interpretation.

For $z\in L_j$, Step~2 shows that every edge of
$\operatorname{pr}_{P_j}z$ outside $\mathcal A$ lies in the tree $B_{P_j}$.
The restriction to $L_j$ ensures that such an edge can cancel only
against a summand from a later copy.

\emph{Step 2: localization of the projected cycles.}
Fix $j$, put $P=P_j$, and let $B_P$ be the ball given by \Cref{geo:overlap}
with base point $R$; it spans a tree.  It contains every vertex shared with
one of $P_{j+1},\ldots,P_N$.  Set
\[
 D_P=B_P\cup(\mathcal A\cap P),
 \qquad Y_P=(Y\cap P)\setminus B_P,
\]
where $\mathcal A\cap P$ has vertex set $Y\cap P$ and the edges common
to both graphs.  The vertices of $Y_P$ lie in no copy $P_{j+1},\ldots,P_N$.
We shall bound the dimension of $\operatorname{pr}_P(L_j)$ in terms of
$\abs{Y_P}$.  First, we claim
\begin{equation}
 \label{op:localizecycle}
 \operatorname{pr}_P(L_j)\subseteq Z_1(D_P;\Ftwo).
\end{equation}
Indeed, let an edge $e$ occur in $\operatorname{pr}_Pz$ for $z\in L_j$.
If $e$ is absent from $\mathcal A$, its coefficient must cancel against
that of another remaining copy, because the total chain $z$ is supported
on $\mathcal A$.  The two endpoints of $e$ are consequently in $B_P$.
Since $e$ is an edge of $P$ and $B_P$ is induced in $P$, the whole edge
belongs to $B_P$.  Every other edge of the projected chain is in
$\mathcal A\cap P$.  The projected chain is already a cycle in $P$,
which proves \eqref{op:localizecycle}.  Hence
\begin{equation}
 \label{op:rankDP}
 \dim\operatorname{pr}_P(L_j)\le b_1(D_P).
\end{equation}

\emph{Step 3: a bound for $b_1(D_P)$ in terms of $\abs{Y_P}$.}
If $P$ is complete, then $Y\cap P=\varnothing$, so $D_P=B_P$ is a tree.
Suppose that $P$ is incomplete, and write $\sigma=\sigma(P)$ and
$r=\operatorname{radius}(B_P)$.  Let $C$ range over the connected
components obtained by deleting the vertices of $B_P$ from $D_P$.
Their vertex sets partition $Y_P$.
Let $l_C$ be the number of edges of $D_P$ joining a vertex of $C$ to a
vertex of $B_P$; we call these edges the \emph{attaching edges} of $C$.
The tree $B_P$ has no cycles, but several attaching edges of the same
component $C$ create new cycles.  Contracting $B_P$ to one vertex makes
this count explicit: when $l_C\ge1$, the first attaching edge connects $C$
to the contracted tree, and each further attaching edge increases the
dimension of the cycle space by one.  A component with $l_C=0$
contributes only $b_1(C)$.
Thus Euler's formula gives
\begin{equation}
 \label{op:attachmentsformula}
 b_1(D_P)=\sum_C\bigl(b_1(C)+\max\{0,l_C-1\}\bigr).
\end{equation}
All edges of $C$ are edges of $P$ lying in $\mathcal G_\omega$.  Moreover,
all such edges between vertices of $C$ occur in $C$, because $\mathcal A$
contains all edges of $\mathcal G_\omega$ joining vertices of $Y$.  If $C$ contains a cycle, then
$\abs C\ge\sigma$.  Applying \cref{op:localdeficit} to its vertex set,
and using connectedness, yields
\begin{equation}
 \label{op:componentrank}
 \begin{aligned}
 b_1(C)&=\abs{\Edges(C)}-\abs C+1\\
       &\le(d/2-1-c/2+1/\sigma)\abs C.
 \end{aligned}
\end{equation}
For an acyclic $C$, the same inequality holds since $b_1(C)=0$ and the
coefficient on the right is positive: $d\ge3$ and $c\le1/200$.

It remains to bound the term $\max\{0,l_C-1\}$.  When $C$ has at least
two attaching edges, large girth forces their endpoints in $C$ to be far
apart, so $C$ must contain many vertices.  Take two distinct attaching
edges of $C$,
with endpoints $v,v'$ in $C$ and $b,b'$ in $B_P$.
A shortest path in $C$ from $v$ to $v'$, those two edges, and the tree path
from $b'$ to $b$ form a simple cycle in $P$.  This remains true if one
pair of endpoints agrees, using a path of length zero; both pairs cannot
agree because $P$ is simple.  The tree path has length at most $2r$.
Thus
\begin{equation}
 \label{op:attachmentseparation}
 \dist_C(v,v')\ge\sigma-2r-2>\sigma/2,
\end{equation}
where $\dist_C$ denotes the path metric of $C$, and where
$r\le\sigma/16$ and $\sigma>16$ were used.
In particular, the endpoints in $C$ of different attaching edges are distinct.
If $l_C\ge2$, the intrinsic balls in $C$ of radius
$\lfloor\sigma/4\rfloor$ centered at these endpoints are pairwise
disjoint.  Each contains at least $\lfloor\sigma/4\rfloor+1>\sigma/4$
vertices: use an initial segment of a shortest path to the endpoint of
another attaching edge.  It follows that
\begin{equation}
 \label{op:attachmentbound}
 \max\{0,l_C-1\}\le(4/\sigma)\abs C.
\end{equation}
The same inequality is immediate for $l_C=0$ or $1$.

Combining \eqref{op:attachmentsformula}--\eqref{op:attachmentbound},
and using $5/\sigma<c/4$, gives
\begin{align}
 b_1(D_P)
 &\le(d/2-1-c/2+5/\sigma)\abs{Y_P}\notag\\
 &\le\beta\abs{Y_P}.
 \label{op:chargedrank}
\end{align}
As noted earlier, this last bound also holds for a complete copy: both
sides are then zero.

\emph{Step 4: summation over the copies $P_1,\ldots,P_N$.}
If $j<k$, the cone point of $P_k$ is no farther from $R$ than that of $P_j$.
The overlap lemma gives $P_j\cap P_k\subseteq B_{P_j}$, so
the set $Y_{P_j}$ misses all of $P_k$.
Thus the vertex sets used in \eqref{op:chargedrank} are pairwise disjoint
subsets of $Y$.  The overlap lemma uses a non-strict distance comparison,
so this conclusion also covers ties in the ordering.
Equations \eqref{op:rankdecomposition}, \eqref{op:rankDP}, and
\eqref{op:chargedrank} now give
\[
 \dim L\le\beta\sum_{j=1}^N\abs{Y_{P_j}}
          \le\beta\abs Y,
\]
which is \eqref{op:ranktarget}.  Returning to Euler's identity,
\begin{align*}
 \abs{\Edges(\mathcal A)}
 &\le(1+\beta)\abs Y-\abs{\pi_0(\mathcal A)}\\
 &\le(d/2-c/4)\abs Y.
\end{align*}
This is equivalent to \eqref{op:globalineq}.
\end{proof}

\subsection{The uniform spectral gap}
\label{op:common}

In this subsection, we combine the estimates of the two preceding
subsections with the expansion of the complete copies to prove the
uniform spectral gap (\cref{op:gap}).  The global degree-defect estimate
gives a lower bound on the energy of every finitely
supported function on $U_\omega$.  The passage from sets to functions is
obtained by integrating the degree defects of the level sets of $\abs\xi^2$.
The missing degree in $\mathcal G_\omega$ contributes a nonnegative vertex
potential,
which is kept in this calculation.

\begin{proposition}
 \label{op:gap}
 Put
 \[
  \eps=\min\{\rho,c^2/(8d)\}>0.
 \]
 For every $\omega\in\Omega$,
 \begin{equation}
  \label{op:fiberspectrum}
  \spec(\pi_\omega(T))\subseteq\{0\}\cup[\eps,2d].
 \end{equation}
 Its kernel is exactly the Hilbert-space direct sum of the one-dimensional
 spaces of constants on the complete copies.
 In particular,
 \begin{equation}
  \label{op:algebraspectrum}
  \spec_{\Cr(H)}(T)\subseteq\{0\}\cup[\eps,2d].
 \end{equation}
\end{proposition}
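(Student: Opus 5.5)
Fix $\omega$ and write $A_\omega=\pi_\omega(T)=dI-\Adj(\mathcal G_\omega)$. The plan is to decompose $\ell^2(\Gamma)$ along the vertex partition of $\Gamma$ into the complete copies and the set $U_\omega$ of \cref{op:enabled}. Distinct complete copies are disjoint, each is a whole connected component of $\mathcal G_\omega$, and no edge of $\mathcal G_\omega$ joins $U_\omega$ to its complement. Hence $A_\omega$ is the orthogonal direct sum of its restrictions to $\ell^2(P)$, for $P$ complete, and to $\ell^2(U_\omega)$. The spectrum of $A_\omega$ is the closure of the union of the spectra of the summands. So it suffices to prove two things: each complete-copy summand has spectrum in $\{0\}\cup[\rho,2d]$, and the $U_\omega$ summand satisfies $A_\omega\ge c^2/(8d)$. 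The upper bound $2d$ is already given by \cref{mod:laplacians}.

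On a complete copy $P$ of type $i$, all edges of $P$ are present and every vertex has degree $d$. The restriction is therefore $dI-\Adj(\Theta_i)$, transported by $\iota_P$. By \cref{data:family}(ii) its kernel is the constants and its other eigenvalues are at least $\rho$. This yields the kernel description once the $U_\omega$ summand is shown to be injective.

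The main work is on $U_\omega$. I will prove $\langle A_\omega\xi,\xi\rangle\ge(c^2/(8d))\|\xi\|^2$ for finitely supported $\xi$ on $U_\omega$, which are dense. Replacing $\xi$ by $|\xi|$ does not increase the form, so we may assume $\xi\ge0$. The quadratic form is
\[
 Q(\xi)=\sum_{h}(d-\deg h)\,\xi(h)^2+\sum_{\{h,h'\}\in\mathcal G_\omega}(\xi(h)-\xi(h'))^2 .
\]
I will run the Cheeger co-area argument on $\phi=\xi^2$ with level sets $Y_t=\{\phi>t\}$, which are finite subsets of $U_\omega$. Integrating the count $N(Y_t)=d|Y_t|-2|\Edges(\mathcal A_t)|$ over $t$ gives
\[
 \int_0^\infty N(Y_t)\,dt=\sum_h(d-\deg h)\phi(h)+\sum_{\text{edges}}|\phi(h)-\phi(h')|+\sum_{\text{edges to }U_\omega^c\text{, none}}\ldots ,
\]
where the boundary edges have one endpoint outside $\operatorname{supp}\xi$ but still inside $U_\omega$, and are counted through $|\phi(h)-\phi(h')|$. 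By \cref{op:globaldeficit} the left side is at least $(c/2)\|\xi\|^2$.

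Next bound the right side by Cauchy--Schwarz. Write $|\xi(h)^2-\xi(h')^2|=|\xi(h)-\xi(h')|\,(\xi(h)+\xi(h'))$, and for the potential term use $(d-\deg h)\xi^2\le\sqrt{d}\cdot\sqrt{(d-\deg h)}\,\xi^2$. Together these bound the right side by
\[
 \sqrt{Q(\xi)}\cdot\bigl(\sum_h(d-\deg h)\xi^2+\sum_{\text{edges}}(\xi(h)+\xi(h'))^2\bigr)^{1/2}\le\sqrt{Q(\xi)}\sqrt{2d}\,\|\xi\| .
\]
Here the last step uses $(\xi(h)+\xi(h'))^2\le2\xi(h)^2+2\xi(h')^2$, and the degree weights combine to give at most $2d\|\xi\|^2$. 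Squaring gives $Q(\xi)\ge c^2\|\xi\|^2/(8d)$.

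I expect the main obstacle to be carefully handling the potential term alongside the edge terms, so that the constant comes out as $c^2/(8d)$ and $N(Y)$ exactly matches the integrated quantity. The fix is to treat each missing incidence at $h$ as an edge to a phantom vertex where $\xi=0$. This turns the setup into the standard Dirichlet Cheeger inequality on a $d$-regular graph, whose constant is $h^2/(2d)$ with $h=c/2$.

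Finally, pass from the fibers to the algebra. The spectral bound holds for every $\omega$, so $\chi(A_\omega)$ is a projection for a continuous $\chi$ vanishing on $[\eps,2d]$ with $\chi(0)=1$. Equivalently, $\|\,T(T-\eps)\text{-type}\,\|$ bounds transfer, and \cref{mod:norm} shows $\pi=\bigoplus_\omega\pi_\omega$ is faithful. Spectra are preserved under injective $*$-homomorphisms, so $\spec_{\Cr(H)}(T)=\spec(\pi(T))$. This is contained in the closure of $\bigcup_\omega\spec(\pi_\omega(T))$, since a bounded resolvent uniform in $\omega$ gives a resolvent of the direct sum. That proves \eqref{op:algebraspectrum}.
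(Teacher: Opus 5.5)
Your proposal is correct and follows the paper's proof essentially step for step. You use the same orthogonal splitting of $\ell^2(\Gamma)$ into the complete copies and $\ell^2(U_\omega)$, the expander gap $\rho$ on complete copies, the co-area argument on level sets of $|\xi|^2$ fed into \cref{op:globaldeficit}, the Cauchy--Schwarz bound whose second factor is at most $2d\norm\xi^2$ (giving $c^2/(8d)$), and the uniform resolvent estimate through the faithful representation $\bigoplus_\omega\pi_\omega$. Two minor blemishes: the aside bounding $(d-\deg h)\xi^2$ by $\sqrt d\sqrt{d-\deg h}\,\xi^2$ is not what your displayed estimate uses (there, as in the paper, $\sqrt{d-\deg h}\,\xi$ is paired with itself) and would actually worsen the constant; and the vague ``$T(T-\eps)$-type'' remark is superfluous given the resolvent argument you then give.
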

\begin{proof}
On $U_\omega$, let $\deg_\omega(x)$ be the degree of $x$ in
$\mathcal G_\omega$ and put $q(x)=d-\deg_\omega(x)\ge0$.
No edge of $\mathcal G_\omega$ joins $U_\omega$ to its complement.
For a finitely supported complex vector $\xi$ on $U_\omega$, the energy is
\begin{equation}
 \label{op:energy}
 \begin{aligned}
 \mathcal E(\xi)
 &=\langle\pi_\omega(T)\xi,\xi\rangle\\
 &=\sum_{\{x,y\}}\abs{\xi(x)-\xi(y)}^2
       +\sum_xq(x)\abs{\xi(x)}^2.
 \end{aligned}
\end{equation}
Here and below the edge sums are over the edges of
$\mathcal G_\omega|_{U_\omega}$, and the vertex sums are over $U_\omega$.
For a finite $F\subseteq U_\omega$, write $\partial_\omega F$ for its
edge boundary in this graph.  Its degree defect is
\[
 d\abs F-2\abs{\Edges(\mathcal G_\omega|_F)}
  =\abs{\partial_\omega F}+\sum_{x\in F}q(x).
\]
Set $s(x)=\abs{\xi(x)}$ and $F_t=\{x:s(x)^2>t\}$ for $t\ge0$.  An edge
$\{x,y\}$ lies in $\partial_\omega F_t$ exactly when $t$ lies between
$s(x)^2$ and $s(y)^2$, and $x\in F_t$ exactly when $t<s(x)^2$.  Hence
\[
 \int_0^\infty\abs{\partial_\omega F_t}\,dt=\sum_{\{x,y\}}\abs{s(x)^2-s(y)^2},
 \qquad
 \int_0^\infty\sum_{x\in F_t}q(x)\,dt=\sum_xq(x)s(x)^2,
\]
and $\int_0^\infty\abs{F_t}\,dt=\norm\xi^2$.  Applying
\cref{op:globaldeficit} to each $F_t$ and integrating over $t\ge0$ gives
\begin{equation}
 \label{op:coarea}
 (c/2)\norm\xi^2
   \le\sum_{\{x,y\}}\abs{s(x)^2-s(y)^2}
        +\sum_xq(x)s(x)^2.
\end{equation}
Cauchy--Schwarz applies to the edge terms and vertex terms together.
Define its two quadratic factors by
\begin{align*}
 A_s&=\sum_{\{x,y\}}\abs{s(x)-s(y)}^2+\sum_xq(x)s(x)^2,\\
 B_s&=\sum_{\{x,y\}}(s(x)+s(y))^2+\sum_xq(x)s(x)^2.
\end{align*}
Then \eqref{op:coarea} gives
\[
 (c/2)\norm\xi^2\le A_s^{1/2}B_s^{1/2}.
\]
The inequality
$\abs{\abs{\xi(x)}-\abs{\xi(y)}}\le\abs{\xi(x)-\xi(y)}$
shows that $A_s\le\mathcal E(\xi)$.  For the other factor,
\begin{align*}
 B_s&\le\sum_x(2\deg_\omega(x)+q(x))s(x)^2\\
    &\le2d\norm\xi^2.
\end{align*}
Consequently,
\[
 (c/2)\norm\xi^2\le\mathcal E(\xi)^{1/2}\sqrt{2d}\,\norm\xi.
\]
If $\xi\ne0$, division and squaring yield
$\mathcal E(\xi)\ge c^2\norm\xi^2/(8d)$; the zero vector satisfies the
same estimate.  By density and boundedness this holds throughout
$\ell^2(U_\omega)$.

On a complete copy $P$, the operator is the graph Laplacian
$dI-\Adj(P)$.  Its kernel consists of constants and its restriction to
their orthogonal complement is at least $\rho I$, by
\cref{data:family}.  The complete copies are disjoint isolated components,
so the decomposition into their component spaces and $\ell^2(U_\omega)$
proves the asserted kernel description and the uniform lower bound
$\eps$ off the kernel.  These statements remain true with infinitely
many components, because the lower bounds are uniform.
Together with $0\le\pi_\omega(T)\le2dI$ from \cref{mod:laplacians}, this
proves \eqref{op:fiberspectrum}.

The direct sum of the $\pi_\omega$ is faithful by
\cref{mod:representations}.  To pass the spectral estimate to this direct
sum, put $\Sigma=\{0\}\cup[\eps,2d]$.  For $z\notin\Sigma$, every
summand has a resolvent satisfying
\[
 \norm{(\pi_\omega(T)-zI)^{-1}}
       \le\dist(z,\Sigma)^{-1}.
\]
Their direct sum is therefore a bounded inverse for the corresponding
operator on the full direct sum.  Its spectrum is contained in $\Sigma$.
A faithful representation of a unital $C^*$-algebra preserves spectra,
which proves \eqref{op:algebraspectrum}.
\end{proof}

\section{The projection and its trace}
\label{trace:section}

In this section, we define the projection $b_A=\chi(T)$ by continuous
functional calculus and compute its canonical trace:
$\tau(b_A)=\sum_{k\in A}2^{-k}$ (\cref{trace:exact}).  Since the
constants of the construction do not depend on $A$, every such binary
subseries is realized, and these numbers fill an interval
(\cref{trace:subsets,trace:small-interval}).  This is Step 6 of the
strategy.

Choose a continuous real function $\chi$ on $[0,2d]$ satisfying
$\chi(0)=1$ and $\chi=0$ on $[\eps,2d]$, and set
\begin{equation}
 \label{trace:projection}
 b_A=\chi(T)\in\Cr(H_A).
\end{equation}
By \cref{op:gap}, $\chi^2=\chi$ on the spectrum of $T$, so $b_A$ is a
projection.  Functional calculus commutes with every $\pi_\omega$;
hence $\pi_\omega(b_A)$ is the orthogonal projection onto
$\ker\pi_\omega(T)$.  The uniform spectral gap permits this continuous
functional calculus directly in $\Cr(H_A)$.

\begin{proposition}
 \label{trace:exact}
 The projection just constructed satisfies
 \begin{equation}
  \label{trace:series}
  \tau_{H_A}(b_A)=\sum_{k\in A}2^{-k}.
 \end{equation}
\end{proposition}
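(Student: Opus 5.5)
We compute the trace through the Haar integral of \cref{mod:representations}: by \eqref{mod:trace-integral},
\[
 \tau_{H_A}(b_A)=\int_\Omega\langle\pi_\omega(b_A)\delta_1,\delta_1\rangle\,d\mu(\omega),
\]
and for each $\omega$ the operator $\pi_\omega(b_A)$ is the orthogonal projection onto $\ker\pi_\omega(T)$. By \cref{op:gap}, this kernel is the Hilbert direct sum, over the complete copies $P$, of the lines spanned by $\one_P$. Hence the projection is $\sum_P\abs P^{-1}\langle\cdot,\one_P\rangle\one_P$, and its diagonal coefficient at the identity is
\[
 \langle\pi_\omega(b_A)\delta_1,\delta_1\rangle=\sum_{P\ni 1,\ P\text{ complete}}\frac1{\abs P}.
\]
Since distinct complete copies are disjoint (end of \Cref{sec:omega}), at most one term is nonzero. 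The integrand is therefore $1/n_k$ when the identity lies in a complete copy of type $k$, and $0$ otherwise.

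We then write the integrand as a sum over all copies containing $1$:
\[
 \langle\pi_\omega(b_A)\delta_1,\delta_1\rangle=\sum_{k\in A}\ \sum_{P\ni1,\ \text{type }k}\frac1{n_k}\one_{\{\omega|_P=w_P\}}(\omega).
\]
All terms are nonnegative, so monotone convergence (Tonelli) allows us to interchange the sum and the integral. Each event $\{\omega|_P=w_P\}$ is clopen and hence measurable.

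By \cref{mod:marginals}, each such event has Haar measure $2^{-k}$. It remains to count the copies of type $k$ through $1$. A copy $P_{g,k}=g\varphi_k(\Theta_k)$ contains $1$ exactly when $g=\varphi_k(v)^{-1}$ for some vertex $v$ of $\Theta_k$. Since $\varphi_k$ is injective on vertices, distinct $v$ give distinct $g$, and by \cref{geo:copies} distinct $g$ give distinct indexed copies. So there are exactly $n_k$ such copies. Summing gives
\[
 \tau_{H_A}(b_A)=\sum_{k\in A}n_k\cdot\frac1{n_k}\cdot2^{-k}=\sum_{k\in A}2^{-k}.
\]

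The only delicate points are the exact kernel description, already supplied by \cref{op:gap}, and the bijection between vertices of $\Theta_k$ and copies of type $k$ through $1$. The bijection needs both injectivity of $\varphi_k$ and the distinctness of indexed copies, so that no copy is counted twice with the same $g$. The rest is bookkeeping: the interchange of sum and integral is justified by nonnegativity, and the disjointness of complete copies guarantees the pointwise formula.
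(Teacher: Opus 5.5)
Your proposal is correct and follows the paper's own argument. You identify the fiberwise diagonal coefficient from the kernel description in the spectral-gap proposition, count the $n_k$ copies of type $k$ through the identity via $g=\varphi_k(v)^{-1}$ and the distinctness of indexed copies, use the marginal probability $2^{-k}$ for each, and sum by nonnegativity. Your Tonelli interchange on the indicator sum is the same step as the paper's countable summation over the pairwise disjoint clopen completeness events.
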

\begin{proof}
On a copy $P$ of type $i$ that is complete for $\omega$, that is,
$\omega|_P=w_P$, the normalized constant vector is
$n_i^{-1/2}\one_P$.  Its rank-one projection has diagonal $1/n_i$ at
each vertex of that copy.  On $U_\omega$ the kernel projection is zero.
Write $D(\omega)=\langle\pi_\omega(b_A)\delta_1,\delta_1\rangle$.
Then
\[
 D(\omega)=\begin{cases}
   1/n_i,&1\text{ lies in a complete copy of type }i,\\
   0,&1\in U_\omega.
  \end{cases}
\]
The complete copy in the first line is unique.

Exactly $n_i$ indexed copies of type $i$ contain the identity.  Indeed,
their translating elements are
$g=\varphi_i(v)^{-1}$ as $v$ ranges over the $n_i$ distinct embedded
vertices of $\Theta_i$.  Distinct indexed copies are distinct in the
geometric construction.  Each such copy $P$ is complete exactly on the
set $\{\omega:\omega|_P=w_P\}$, which is open and closed in $\Omega$ and has
measure $2^{-k_i}$ by \cref{mod:marginals}.  For different copies containing
the identity, including copies of different types, these sets are
pairwise disjoint, because distinct complete copies are disjoint.
Applying \eqref{mod:trace-integral} and nonnegative countable summation
therefore gives
\[
 \tau_{H_A}(b_A)
   =\sum_{i\in A}n_i\frac1{n_i}2^{-k_i}
   =\sum_{k\in A}2^{-k},
\]
since $k_i=i$.
\end{proof}

\begin{corollary}
 \label{trace:subsets}
 With the finite alphabet, the vector space $E=\Ftwo^m$, the graphs, and
 the subspaces fixed as in
 \cref{data:family,code:exist}, every subset
 $A\subseteq\{K,K+1,\ldots\}$ gives a finitely generated group $H_A$
 and a projection $b_A\in\Cr(H_A)$ satisfying
 \[
    \tau_{H_A}(b_A)=\sum_{k\in A}2^{-k}.
 \]
\end{corollary}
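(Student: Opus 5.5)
The proof assembles results already established, and the one real task is to check that nothing depends on $A$. The plan has three steps.

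First, fix all data before $A$ is chosen. These are the alphabet $S$, the labeled graphs $(\Theta_k)_{k\ge K}$, $m$, $\eta$, $c$ and $\rho$ from \cref{data:family}, together with the subspaces $\mathcal C_k$ and reference functions $w_k$ from \cref{code:exist}.

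Second, given a subset $A\subseteq\{K,K+1,\ldots\}$, form $\Gamma_A$, $\Omega_A$, $V_A$ and $H_A=V_A\rtimes\Gamma_A$ as in \Cref{mod:section}.
\begin{itemize}
\item The last assertion of \cref{data:family} says that the hypotheses of \cref{geo:hypotheses}, the girth bounds, and the tree-ball bounds persist for the subfamily indexed by $A$. Hence the geometric results of \Cref{sec:geometry} apply to $\Gamma_A$, in particular \cref{geo:overlap}, \cref{geo:overlap-tree} and \cref{geo:cycle-sum}.
\item Finite generation of $H_A$ is \cref{mod:finite-generation}. Its generators are the $m$ elements $(v_{*,j},1)$ and the elements $(0,s)$ for $s\in S$.
\item The element $T_A$ lies in $\C[H_A]$ by the Fourier expansion at the end of \Cref{sec:dual}.
\end{itemize}

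Third, construct the projection and compute its trace. \cref{op:gap} gives $\spec(T_A)\subseteq\{0\}\cup[\eps,2d]$ with $\eps=\min\{\rho,c^2/(8d)\}$. This $\eps$ depends only on the fixed constants, not on $A$. Consequently a single continuous function $\chi$ works for every $A$, and $b_A=\chi(T_A)$ is a projection in $\Cr(H_A)$. \cref{trace:exact} then gives $\tau_{H_A}(b_A)=\sum_{k\in A}2^{-k}$.

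If $A$ is empty, $\Gamma_A$ is the free group and the formula should still hold. In that case there are no copies, so $U_\omega=\Gamma$. \cref{op:globaldeficit} holds vacuously, since $\mathcal L(\omega(1))=\mathcal L(0)$ would make all degrees zero, which is immediate. The kernel is then $0$, and $b_A=0$ has trace $0$, which equals the empty sum. I would note this degenerate case explicitly.

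The main point to verify, and the only potential obstacle, is the uniformity of the constants. Concretely, one must check that in the spectral gap the bounds $\sigma_i>\max\{16,20/c\}$, $\abs{B_P}<\eta\abs P$ and the expansion constant $\kappa$ are intrinsic to the individual graphs $\Theta_i$. The universal hyperbolicity constant $D_0$ from \cref{geo:hyperbolicity}, and hence $L_0$ and $\lambda$, must also not depend on the subfamily. Both facts are asserted in \cref{data:family} and \cref{geo:hyperbolicity}, so I would cite them rather than reprove them.
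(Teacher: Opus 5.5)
Your outline matches the paper's proof. You fix all data before choosing $A$ and use the subfamily clause of \cref{data:family}, so that the geometric lemmas, \cref{op:gap} (with the same $\eps$) and \cref{trace:exact} apply to $H_A$. The case $A=\varnothing$ is then treated separately. The nonempty case is fine as you wrote it.

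Your justification of the empty case is wrong, however. When $A=\varnothing$ there are no copies and hence no constraints, so $\Omega_\varnothing=E^{F(S)}$ and $\omega$ is an arbitrary function. Nothing forces $\mathcal L(\omega(1))=\mathcal L(0)$: the sets $\mathcal L(\omega(h))$ can be arbitrary $d$-element subsets of $S^{\pm1}$, and $\mathcal G_\omega$ can have many edges. For instance, take distinct $s_1,s_2,s_3\in S$, put $J=\{s_1^{\pm1},s_2^{\pm1},s_3^{\pm1}\}$, and let $\omega\equiv a_J$. Then every component of $\mathcal G_\omega$ is an infinite $6$-regular tree.

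So \cref{op:globaldeficit} is not vacuous here, but it is still true, for a different reason. The Cayley graph $\Cay(F(S),S)$ is a tree, so $Z_1(X;\Ftwo)=0$, and hence $b_1(\mathcal A)=0$ for every finite $Y$. Euler's identity then gives $d\abs Y-2\abs{\Edges(\mathcal A)}\ge(d-2)\abs Y\ge(c/2)\abs Y$; this is the case $L=0$ in the proof of that lemma. Consequently \cref{op:gap} applies. Its kernel is the direct sum over complete copies, which is empty, so $b_\varnothing=0$ and its trace is the empty sum. This is exactly the paper's one-line treatment: ``there are no complete copies, so \cref{op:gap} gives $b_A=0$.''
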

\begin{proof}
Take exactly the graphs indexed by $A$ and use their already chosen
labels and subspaces.  They satisfy the same hypotheses of the geometric
lemmas, which therefore apply to the new presentation with the same
constants.  The inequalities \eqref{code:distance} also hold for every
chosen graph.
The proof also covers finite and empty subfamilies.  If $A=\varnothing$,
then $\Gamma_A=F(S)$ and $V_A=(\Ftwo[F(S)])^m$.  There are no complete
copies, so \Cref{op:gap} gives $b_A=0$.
\end{proof}

\begin{corollary}
\label{trace:small-interval}
Set $\delta=2^{1-K}$.  For every $q\in[0,\delta]$ there is a set
$A\subseteq\{K,K+1,\ldots\}$ for which the preceding construction gives
\[
 b_A\in\Cr(H_A),\qquad \tau_{H_A}(b_A)=q.
\]
\end{corollary}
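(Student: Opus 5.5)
By \cref{trace:subsets}, every subset $A\subseteq\{K,K+1,\ldots\}$ gives a projection $b_A\in\Cr(H_A)$ with $\tau_{H_A}(b_A)=\sum_{k\in A}2^{-k}$. It therefore suffices to prove a purely arithmetic statement: the map $A\mapsto\sum_{k\in A}2^{-k}$ from subsets of $\{K,K+1,\ldots\}$ has image exactly $[0,\delta]$, where $\delta=2^{1-K}=\sum_{k\ge K}2^{-k}$.

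For $q=0$ take $A=\varnothing$. For $q=\delta$ take $A=\{K,K+1,\ldots\}$. For $0<q<\delta$, write $2^{K}q\in(0,2)$ in binary. Either of two equivalent procedures works.

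\emph{Greedy choice.} Process $k=K,K+1,\ldots$ in order, and put $k$ into $A$ exactly when $2^{-k}$ plus the sum already chosen is at most $q$. Let $s_k$ be the partial sum after step $k$. By induction, $0\le q-s_k\le\sum_{j>k}2^{-j}=2^{-k}$.
\begin{itemize}
\item If $k$ is added, the remainder only decreases, and the new remainder $q-s_k$ is still at least $0$.
\item If $k$ is not added, then $q-s_{k-1}<2^{-k}$, so $q-s_k=q-s_{k-1}<2^{-k}$.
\end{itemize}
Letting $k\to\infty$ gives $\sum_{k\in A}2^{-k}=q$.

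\emph{Binary digits.} Alternatively, write $2^{K-1}q=\sum_{j\ge1}\epsilon_j2^{-j}$ with $\epsilon_j\in\{0,1\}$. This is possible because $2^{K-1}q\in[0,1]$; the value $1$ is realized by the all-ones expansion. Then set $A=\{K-1+j:\epsilon_j=1\}$.

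The only point needing care is the endpoints, in particular that $q=\delta$ is attained by taking all digits equal to one, and this is immediate. There is no real obstacle: every analytic difficulty was handled in \cref{op:gap} and \cref{trace:exact}, with constants independent of $A$, and \cref{trace:subsets} confirms that these results apply to every subfamily, including finite ones and the empty one.
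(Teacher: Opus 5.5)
Your proposal is correct and follows the paper's proof: reduce to \Cref{trace:subsets} and take $A$ to be the set of positions of the digits equal to one in a binary expansion of $q$, using the all-ones expansion at the endpoint $q=\delta$. In your greedy variant, when $k$ is added, the bound $q-s_k\le 2^{-k}$ follows from the inductive bound $q-s_{k-1}\le 2^{1-k}$ by subtracting $2^{-k}$, not merely from the remainder decreasing, but this step is immediate.
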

\begin{proof}
Every number in $[0,2^{1-K}]$ can be written as
$q=\sum_{k\ge K}\epsilon_k2^{-k}$ with $\epsilon_k\in\{0,1\}$.
For $q<2^{1-K}$, take a binary expansion; at the upper endpoint,
take $\epsilon_k=1$ for every $k\ge K$.  Apply \Cref{trace:subsets}
with $A=\{k\ge K:\epsilon_k=1\}$.
\end{proof}

\section{Realizing the prescribed trace values}
\label{sec:prescribed}

In this section, we complete the proofs of \Cref{thm:main} and
\Cref{cor:nonsurjective}.  For each sufficiently small
nonnegative number $q$, \Cref{trace:small-interval} gives a semidirect
product $H_A=V_A\rtimes\Gamma_A$ and a projection in $\Cr(H_A)$ of
trace $q$, with $A$ depending on $q$.  Taking the direct product with a
finite cyclic group then gives every prescribed trace in $(0,1)$.
This completes Step 7 of the strategy.

\begin{lemma}\label{prescribed:rational}
For integers $N\ge1$ and $0\le a\le N$, the reduced algebra of the finite
cyclic group $C_N=\Z/N\Z$ of order $N$ contains a projection of canonical
trace $a/N$.
\end{lemma}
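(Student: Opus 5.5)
The plan is to write down an explicit projection: a sum of $a$ minimal spectral projections of the group algebra. Let $g$ be a generator of $C_N$ and $\zeta=e^{2\pi i/N}$. For $0\le j<N$ put
\[
 p_j=\frac1N\sum_{l=0}^{N-1}\zeta^{-jl}g^l\in\C[C_N]=\Cr(C_N).
\]
Since $C_N$ is finite, the group algebra is already a $C^*$-algebra and equals $\Cr(C_N)$; no closure is needed.

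The steps are as follows.
\begin{enumerate}
\item Check $p_j^*=p_j$, using $(g^l)^*=g^{-l}$ and reindexing $l\mapsto -l$.
\item Check $p_jp_{j'}=\delta_{jj'}p_j$ by expanding the product and applying the orthogonality relation $\sum_{l}\zeta^{(j'-j)l}=N\delta_{jj'}$. Equivalently, under the Fourier isomorphism $\C[C_N]\cong\C^N$, $p_j$ corresponds to the $j$-th coordinate indicator.
\item Form $e=\sum_{j=0}^{a-1}p_j$ (with $e=0$ when $a=0$). It is a projection, being a sum of mutually orthogonal projections.
\item Compute the trace. The canonical trace picks out the coefficient of the identity $g^0$. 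In each $p_j$ that coefficient is $1/N$, so $\tau_{C_N}(e)=a/N$.
\end{enumerate}

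Two boundary cases serve as consistency checks: $a=N$ gives $e=\sum_j p_j=1$, the identity, and $a=0$ gives $e=0$.

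No step is a real obstacle. The only points needing care are the orthogonality computation in step 2 and confirming that $\Cr(C_N)$ is the full group algebra.
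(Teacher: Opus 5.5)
Your proposal is correct and is essentially the paper's own proof: the paper also takes the spectral projections $e_j=\frac1N\sum_{t=0}^{N-1}\zeta^{-jt}u^t$, verifies that they are mutually orthogonal self-adjoint projections by summing geometric series, and sums $a$ of them, each having trace $1/N$. Your treatment of the boundary cases $a=0$ and $a=N$, and your remark that $\Cr(C_N)$ is the whole group algebra, match the paper's closing comment about empty sums and $N=1$.
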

\begin{proof}
If $u$ is the generator and $\zeta=e^{2\pi i/N}$, the elements
\[
 e_j=\frac1N\sum_{t=0}^{N-1}\zeta^{-jt}u^t,
 \qquad j=0,\ldots,N-1,
\]
are mutually orthogonal self-adjoint projections.  These assertions follow
by summing the geometric series $\sum_{t=0}^{N-1}\zeta^{\ell t}$.
Their traces all equal $1/N$.  The sum of any $a$ of them has the desired
trace.  Empty sums and $N=1$ are included.
\end{proof}

Tensoring with $C^*(C_N)$ has a simple interpretation.  Since
$C^*(C_N)\cong\C^N$, the algebra $C^*(C_N)\otimes\Cr(H)$ is a direct
sum of $N$ copies of $\Cr(H)$, and its canonical trace is the average of
their canonical traces.  If we put the identity projection in $a$
summands and a projection of trace $q$ in the remaining summands, the
resulting trace is $(a+(N-a)q)/N$.  The following formula expresses this
construction without choosing the individual summands.

\begin{lemma}\label{prescribed:tensor}
Let $H$ be a discrete group and let $b\in\Cr(H)$ be a projection of
canonical trace $q$.  If $e\in C^*(C_N)$ is a projection of trace $a/N$,
then
\[
 B=e\otimes1+(1-e)\otimes b
 \in\Cr(C_N\times H)
\]
is a projection of canonical trace
\[
 \tau_{C_N\times H}(B)=\frac aN+\left(1-\frac aN\right)q.
\]
\end{lemma}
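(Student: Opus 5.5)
The plan is to identify $\Cr(C_N\times H)$ with the spatial tensor product $C^*(C_N)\otimes\Cr(H)$, verify the projection property algebraically, and then compute the trace using multiplicativity of canonical traces on elementary tensors.

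\emph{Step 1: the identification.} The left regular representation of $C_N\times H$ on $\ell^2(C_N\times H)\cong\ell^2(C_N)\otimes\ell^2(H)$ is the tensor product of the two regular representations. Hence $\Cr(C_N\times H)$ is the norm closure of the algebraic tensor product $\C[C_N]\odot\C[H]$ acting on this Hilbert space. Since $C^*(C_N)=\C[C_N]$ is finite dimensional, this closure is exactly the spatial tensor product $C^*(C_N)\otimes\Cr(H)$, and no completion issues arise. In particular, $e\otimes1$ and $(1-e)\otimes b$ are elements of $\Cr(C_N\times H)$.

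\emph{Step 2: $B$ is a projection.} Both summands are self-adjoint, because $e$ and $b$ are. Since $e(1-e)=0$, the products $(e\otimes1)((1-e)\otimes b)$ and $((1-e)\otimes b)(e\otimes1)$ vanish. Each summand is idempotent, since $e^2=e$, $(1-e)^2=1-e$, and $b^2=b$. Therefore
\[
 B^2=e\otimes1+(1-e)\otimes b=B .
\]

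\emph{Step 3: the trace.} The key identity is
\[
 \tau_{C_N\times H}(x\otimes y)=\tau_{C_N}(x)\,\tau_H(y).
\]
On group elements, $(c,h)$ equals the identity exactly when both $c$ and $h$ are identities, so the identity holds for $x\in\C[C_N]$ and $y\in\C[H]$ by bilinearity. Equivalently, $\delta_{(1,1)}=\delta_1\otimes\delta_1$ and the vector state factors. The identity extends to $y\in\Cr(H)$ by norm continuity of both sides in $y$, since the canonical traces are vector states. Applying it to $B$ gives
\[
 \tau(B)=\tau_{C_N}(e)\cdot1+\tau_{C_N}(1-e)\,q=\frac aN+\Bigl(1-\frac aN\Bigr)q .
\]

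No step presents a real obstacle. The only point needing care is the density and continuity argument in Steps 1 and 3, which is immediate because $C_N$ is finite.
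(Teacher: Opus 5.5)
Your proposal is correct and follows the same route as the paper's proof: identify $\Cr(C_N\times H)$ with $C^*(C_N)\otimes\Cr(H)$ through the tensor decomposition of the left regular representation, observe that the two summands of $B$ are orthogonal self-adjoint projections, and apply the product of the canonical traces. You supply the routine details the paper leaves implicit, namely the finite-dimensionality remark and the continuity extension of $\tau_{C_N\times H}(x\otimes y)=\tau_{C_N}(x)\tau_H(y)$, and both are handled correctly.
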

\begin{proof}
The identification
\[
 \Cr(C_N\times H)=C^*(C_N)\otimes\Cr(H)
\]
follows from the tensor-product decomposition of the left regular
representation.  Under this identification, the canonical trace is the
product of the two canonical traces.  The summands in $B$ are
self-adjoint projections and their product is zero.  Thus $B$ is a
projection, and applying the product trace gives the displayed formula.
\end{proof}

\begin{proof}[Proof of \Cref{thm:main}]
Fix the cutoff $K\ge2$ from \Cref{data:family}, and put
\[
 N=2^K,\qquad \delta=2^{1-K}=\frac2N.
\]
For the given $r\in(0,1)$, define
\begin{equation}\label{prescribed:parameters}
 s=\min\{r,1-r\},\qquad
 a=\lfloor Ns\rfloor,\qquad
 q=\frac{Ns-a}{N-a}.
\end{equation}
Since $0<s\le1/2$, we have $a\le N/2$, and hence
\[
 0\le q<\frac1{N-a}\le\frac2N=\delta.
\]
By \Cref{trace:small-interval}, choose
$A=A(r)\subseteq\{K,K+1,\ldots\}$ such that
\begin{equation}\label{prescribed:digits}
 q=\sum_{k\in A}2^{-k},
\end{equation}
and let $b_A\in\Cr(H_A)$ be the corresponding projection of trace $q$.
When $q=0$, take $A=\varnothing$ and $b_A=0$.
Choose $e\in C^*(C_N)$ with trace $a/N$ by
\Cref{prescribed:rational}, and set
\begin{equation}\label{prescribed:group}
 G_r=C_N\times H_A
     \cong(C_N\times V_A)\rtimes\Gamma_A,
\end{equation}
where $\Gamma_A$ acts trivially on $C_N$ and by its given action on $V_A$.
This is a finitely generated group.  Its normal subgroup
$C_N\times V_A$ is locally finite and abelian.

By \Cref{prescribed:tensor}, the projection
\[
 B=e\otimes1+(1-e)\otimes b_A\in\Cr(G_r)
\]
has trace
\[
 \tau_{G_r}(B)
   =\frac aN+\frac{N-a}{N}\frac{Ns-a}{N-a}=s.
\]
Set $e_r=B$ when $r\le1/2$, and $e_r=1-B$ when $r>1/2$.
In both cases $\tau_{G_r}(e_r)=r$.
\end{proof}

\begin{remark}
All constants, including $m$, $K$, and $N=2^K$, are chosen before $r$.
Only the subset $A(r)$, the projection $e\in C^*(C_N)$,
and the choice between $B$ and $1-B$ depend on $r$.
The number of generators is uniformly bounded: $H_A$ has at most
$|S|+m$ generators, and $G_r$ has at most $|S|+m+1$.
The relations encode the binary digits of the number $q$ in
\eqref{prescribed:parameters}.
\end{remark}

For a discrete group $G$, we use the unnormalized trace
\[
 (\Tr_\ell\otimes\tau_G)((a_{ij}))=\sum_{i=1}^\ell\tau_G(a_{ii})
\]
on $M_\ell(\Cr(G))$.  It is invariant under equivalence of projections
and compatible with stabilization, so it induces the trace homomorphism
\begin{align*}
 \tau_{G,*}:K_0(\Cr(G))&\longrightarrow\R,\\
 [P]-[Q]&\longmapsto
 (\Tr\otimes\tau_G)(P)-(\Tr\otimes\tau_G)(Q).
\end{align*}

\begin{corollary}\label{prescribed:allreal}
For every $t\in\R$ there exist a finitely generated group $G$ and
$x\in K_0(\Cr(G))$ with $\tau_{G,*}(x)=t$.
\end{corollary}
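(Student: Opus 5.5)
The plan is to realize every real $t$ as the trace of a $K_0$-class by combining \Cref{thm:main} with the additivity of $\tau_{G,*}$. First I handle $t\ge0$, then obtain negative values by taking negatives of classes. Write $t=n+r$ with $n=\lfloor t\rfloor\ge0$ and $r\in[0,1)$.

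If $r=0$, take any finitely generated group, for instance the trivial group $G=\{1\}$. Set $x=n[1]$, the class of the identity of $M_n(\Cr(G))$. Its trace is $n\,\tau_G(1)=n$.

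If $r\in(0,1)$, take $G=G_r$ and the projection $e_r$ from \Cref{thm:main}. Consider the diagonal projection
\[
 P=\operatorname{diag}(1,\ldots,1,e_r)\in M_{n+1}(\Cr(G_r)),
\]
with $n$ entries equal to $1$. It is a projection, and $(\Tr_{n+1}\otimes\tau_{G_r})(P)=n+r=t$. So $x=[P]$ works. Equivalently, $x=n[1]+[e_r]$, using additivity of $\tau_{G,*}$ on direct sums.

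For $t<0$, apply the previous case to $-t>0$. This gives $G$ and $y\in K_0(\Cr(G))$ with $\tau_{G,*}(y)=-t$. Put $x=-y$; since $\tau_{G,*}$ is a group homomorphism, $\tau_{G,*}(x)=t$. Concretely, $x=0-[P]$ in the notation $[P]-[Q]$.

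The only point needing care is that $\tau_{G,*}$ is a well-defined homomorphism, i.e.\ invariant under equivalence and stabilization. This was stated just before the corollary, so no step is a real obstacle.
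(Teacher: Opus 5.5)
Your proposal is correct and follows the paper's proof: split $t$ into its integer and fractional parts, realize the integer part by multiples of $[1]$ and the fractional part by $[e_r]$ from \Cref{thm:main}, using the trivial group when $r=0$. The only difference is that the paper takes $j=\lfloor t\rfloor\in\Z$ possibly negative and sets $x=j[1]+[e_r]$ directly, so your separate negation step for $t<0$ is unnecessary, though equally valid.
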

\begin{proof}
Write $t=j+r$, with $j\in\Z$ and $0\le r<1$.  If $r>0$, use
\Cref{thm:main} and set $x=j[1]+[e_r]$.  If $r=0$, the class $j[1]$ over
the trivial group suffices.
\end{proof}

\subsection{The obstruction to surjectivity of the reduced Baum--Connes map}

In this subsection, we deduce from L\"uck's trace theorem that, for
irrational $r$, the class of $e_r$ is not in the image of the reduced
Baum--Connes map.

Let $\underline EG$ be the classifying space for proper actions, and write
\[
 \mu_0^G:K_0^G(\underline EG)\longrightarrow K_0(\Cr(G))
\]
for the reduced Baum--Connes map
\cite{BC,BCH}, with its usual direct-limit domain over $G$-compact
supports; see also \cite{Valette}.  The following is the form
of L\"uck's trace theorem needed here \cite[Theorem~0.3]{Luck}:
\begin{equation}\label{prescribed:luck}
 \tau_{G,*}\bigl(\im\mu_0^G\bigr)
 \subseteq\Lambda^G:=\Z[1/|F|:F\le G\text{ finite}]\subseteq\Q.
\end{equation}
More precisely, the cited theorem identifies the trace range after
extension of scalars to $\Lambda^G$.  Applying it to $1\otimes y$ for a
class $y$ in the image of the integral map gives the inclusion displayed
above.

\begin{lemma}\label{prescribed:countable}
For a countable discrete group $G$, the group $K_0(\Cr(G))$ and its
canonical trace range are countable.
\end{lemma}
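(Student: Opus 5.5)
The plan is to show that $\Cr(G)$ is a separable $C^*$-algebra, and that the $K_0$-group of any separable unital $C^*$-algebra is countable. Countability of the trace range then follows at once, because $\tau_{G,*}$ is a homomorphism defined on a countable group.

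\emph{Separability.} Since $G$ is countable, the group ring $\C[G]$ has a countable dense subset, namely the finite sums $\sum a_g g$ with coefficients $a_g\in\Q+i\Q$. Because $\C[G]$ is norm-dense in $\Cr(G)$, the algebra $\Cr(G)$ is separable. For the same reason each matrix algebra $M_\ell(\Cr(G))$ is separable, and so is the increasing union of these algebras over $\ell$.

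\emph{Countably many projection classes.} Let $P$ and $Q$ be projections in $M_\ell(\Cr(G))$ with $\norm{P-Q}<1$. Then $P$ and $Q$ are unitarily equivalent; the standard construction is $u=(QP+(1-Q)(1-P))$, normalized by its polar part. In particular $[P]=[Q]$ in $K_0$. Now fix a countable dense subset $\mathcal D_\ell$ of $M_\ell(\Cr(G))$. For each $x\in\mathcal D_\ell$, choose one projection (if any exists) within distance $1/2$ of $x$. Every projection $P$ has some $x\in\mathcal D_\ell$ with $\norm{P-x}<1/2$, so the chosen projection $Q$ for that $x$ satisfies $\norm{P-Q}<1$. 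Hence every projection in $M_\ell(\Cr(G))$ is equivalent to one of countably many chosen projections. Taking the union over all $\ell$ shows that the semigroup of Murray--von Neumann classes is countable.

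\emph{Countability of $K_0$ and its trace range.} For unital algebras, every element of $K_0(\Cr(G))$ has the form $[P]-[Q]$ with $P,Q$ projections in some matrix algebra. Therefore $K_0(\Cr(G))$ is a quotient of a countable set of pairs, and so it is countable. Its image under $\tau_{G,*}$ is consequently countable.

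The only step needing care is the perturbation fact, that norm-close projections are equivalent. It is standard and should simply be cited: if $\norm{P-Q}<1$, the element $z=QP+(1-Q)(1-P)$ is invertible, and $z|z|^{-1}$ is a unitary conjugating $P$ to $Q$. No other obstacle is expected.
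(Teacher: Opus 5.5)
Your proposal is correct and follows essentially the paper's argument: separability of $\Cr(G)$ together with the fact that projections at norm distance less than one are equivalent gives countably many classes in each matrix algebra, followed by formal differences and the trace homomorphism. The only cosmetic difference is that you pick representatives near a countable dense set, whereas the paper observes that the equivalence classes are disjoint open subsets of a second-countable space.
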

\begin{proof}
The algebra $\Cr(G)$ is separable, so the space of projections in each
matrix algebra over it is a separable metric space.  Projections whose
norm distance is less than one are Murray--von Neumann equivalent.
Thus every equivalence class is open in that projection space.  A
second-countable space has only countably many disjoint nonempty open
sets, and hence only countably many such classes.  Taking the countable
union over matrix sizes, and then forming formal differences, proves
that $K_0(\Cr(G))$ is countable.  Its image under the trace is countable
as well.
\end{proof}

\begin{proof}[Proof of \Cref{cor:nonsurjective}]
For irrational $r$, the trace of $[e_r]$ is $r$, while
\eqref{prescribed:luck} gives a rational trace for every class in the
image of the reduced Baum--Connes map.
This proves the integral obstruction.  The trace extends to the
rationalized target by
\[
 K_0(\Cr(G_r))\otimes_\Z\Q\longrightarrow\R,
 \qquad x\otimes q\longmapsto q\tau_{G_r,*}(x).
\]
A class in the image of the rationalized map is a finite rational linear
combination of classes in the image of the integral map, so it still has
rational trace.  The class
$[e_r]\otimes1$ has irrational trace $r$ and remains outside the image.
\end{proof}

\subsection{Remarks}
\label{prescribed:remarks}

For a discrete group $G$, we define the invariant
\[
 T(G)=\{\tau_G(e):e=e^*=e^2\in\Cr(G)\}.
\]
Thus $T(G)\subseteq[0,1]$ is the set of values of the canonical
trace on projections in the reduced group $C^*$-algebra itself.  The
range $\tau_{G,*}(K_0(\Cr(G)))$ also allows differences of
classes of projections in matrix algebras.  The set $T(G)$ is
invariant under group isomorphisms.  If $\Cr(G)$ has a unique tracial
state, then any $C^*$-algebra isomorphism
$\Cr(G)\cong\Cr(\Lambda)$ carries the canonical trace to the canonical
trace, and hence $T(G)=T(\Lambda)$.  Thus $T(G)$ is also an invariant of
the reduced group $C^*$-algebra among groups with the unique trace
property; see Breuillard--Kalantar--Kennedy--Ozawa~\cite{BKKO}.

The finite central factor of $G_r$ gives nontrivial central projections
in $\Cr(G_r)$, so this algebra is not simple and does not have a unique
tracial state.  To obtain simple algebras with unique trace, define
\begin{equation}\label{prescribed:simple-group}
 \widetilde G_r=G_r*\Z.
\end{equation}
The subgroup inclusion induces a trace-preserving embedding
$\Cr(G_r)\hookrightarrow\Cr(\widetilde G_r)$.  Indeed, the restriction
of the left regular representation of $\widetilde G_r$ to $G_r$ is an
orthogonal sum of copies of the left regular representation of $G_r$.
Thus $r\in T(\widetilde G_r)$.  Since both free factors are nontrivial
and $\Z$ has more than two elements, the theorem of Paschke--Salinas
\cite[Theorem~1.1]{PS} implies that $\Cr(\widetilde G_r)$ is simple and
has a unique tracial state.

It follows that the family $\{\Cr(\widetilde G_r):r\in(0,1)\}$ contains
continuum many pairwise nonisomorphic simple reduced group $C^*$-algebras
with unique tracial states.  By \Cref{prescribed:countable}, each set
$T(\widetilde G_r)$ is countable.  Since algebra isomorphisms preserve
the unique trace, each algebra isomorphism type can account for only
countably many prescribed values of $r$.  The continuum of values
therefore gives continuum many algebra isomorphism types.  The sets
$T(\widetilde G_r)$ themselves also take continuum many distinct values,
since each is countable and their union contains $(0,1)$.

\section{The maximal strong Novikov conjecture}
\label{sec:maximal}

In this section, we prove the maximal strong Novikov conjecture for the
groups $G_r$ constructed in \Cref{thm:main} and for the groups
$\widetilde G_r=G_r*\Z$ in \eqref{prescribed:simple-group}.  We first
express these groups as direct limits of groups of finite asymptotic dimension, with surjective
connecting homomorphisms.  We then prove that injectivity of maximal
assembly passes to such limits, following the colimit methods of
Bartels--Echterhoff--L\"uck~\cite{BEL}.

For a discrete group $G$, let $C^*_{\max}(G)$ be the completion of
$\C[G]$ in the supremum of the norms of its unitary representations.
The maximal assembly map is
\[
 \mu_*^{G,\max}:K_*^G(\underline EG)
       \longrightarrow K_*(C^*_{\max}(G)).
\]
As in the introduction, equivariant $K$-homology is taken with
$G$-compact supports.  The natural map $EG\to\underline EG$ induces
\[
 \iota_G:K_*(BG)=K_*^G(EG)\longrightarrow K_*^G(\underline EG).
\]
Here $K_*(BG)$ is ordinary $K$-homology with compact supports.  The
maximal strong Novikov conjecture asserts that
$\nu_*^G:=\mu_*^{G,\max}\circ\iota_G$ is rationally injective.

\begin{theorem}\label{max:main}
For every $r\in(0,1)$ and either $G=G_r$ or $G=\widetilde G_r$, the
maximal assembly map
\[
 \mu_*^{G,\max}:K_*^G(\underline EG)
       \longrightarrow K_*(C^*_{\max}(G))
\]
is injective in both degrees.  Consequently,
\[
 \nu_*^G\otimes\Q:
 K_*(BG)\otimes\Q
       \longrightarrow K_*(C^*_{\max}(G))\otimes\Q
\]
is injective, so both groups satisfy the maximal strong Novikov
conjecture.
\end{theorem}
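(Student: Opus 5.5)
We will write $G$ as a direct limit of groups with finite asymptotic dimension, use the known injectivity of maximal assembly for those groups, and pass injectivity to the limit.

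\emph{Finite stages.} Enumerate the selected graphs $\Theta_i$, $i\in A$, and let $A_n$ be the first $n$ indices. Put
\[
 \Gamma_n=\langle S\mid\text{labels of closed paths in }\Theta_i,\ i\in A_n\rangle .
\]
This is a finite graphical $C'(1/24)$ presentation. Hence $\Gamma_n$ is hyperbolic by the finite-relator case of the linear isoperimetric argument of \cref{geo:hyperbolicity}, and so has finite asymptotic dimension.

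For $V$, use the presentation $V\cong E^{(\Gamma)}/W$ of \cref{mod:functions}. Let $W_n\subseteq E^{(\Gamma_n)}$ be the sum of the spaces $\mathcal C_P^\perp$ over all copies $P$ of the graphs indexed by $A_n$, and put $V_n=E^{(\Gamma_n)}/W_n$. These are locally finite abelian $\Gamma_n$-modules, and we set $H_n=V_n\rtimes\Gamma_n$ and $G_n=C_N\times H_n$. The quotient maps $\Gamma_n\to\Gamma_{n+1}\to\Gamma$ carry copies to copies, hence $W_n$ into $W_{n+1}$. They therefore induce surjections $G_n\to G_{n+1}$ with $\varinjlim G_n=G_r$. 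Surjectivity holds because both the generators and the relations are compatible, and every relation of $G_r$ involves only finitely many relators.

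Each $G_n$ is an extension of the hyperbolic group $\Gamma_n$ by the locally finite group $C_N\times V_n$. A locally finite countable group has asymptotic dimension $0$, so by the extension theorem for asymptotic dimension (Dranishnikov--Smith, Bell--Dranishnikov) $\operatorname{asdim}G_n<\infty$. For $\widetilde G_r$ we use $G_n*\Z$, which again has finite asymptotic dimension.

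\emph{Injectivity at finite stages.} For groups of finite asymptotic dimension, Yu's work (via coarse embeddability and Higson's Dirac--dual-Dirac argument, e.g.\ through a-T-menability-type Bott elements) gives injectivity of the maximal assembly map with proper coefficients. More precisely, Kasparov--Yu and Skandalis--Tu--Yu give injectivity of $\mu^{G_n}$ for $\underline EG_n$. We will need the integral and maximal version. We plan to obtain it by combining two facts: Higson's theorem that for groups with a uniform embedding the Dirac element of the proper $G$-algebra gives a $\gamma$-element factoring through proper algebras, and the fact that the maximal map factors the reduced map.

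\emph{Passage to the limit.} This is the main obstacle. Given $x\in K_*^{G}(\underline EG)$ with $\mu^{G,\max}(x)=0$, we first note that $x$ has $G$-compact support. Following Bartels--Echterhoff--L\"uck, $K_*^{G}(\underline EG)\cong\varinjlim K_*^{G_n}(\underline EG_n)$ when all connecting maps are surjective and the finite subgroups are compatible. On the analytic side, $C^*_{\max}$ commutes with directed colimits along surjections, so $K_*(C^*_{\max}(G))=\varinjlim K_*(C^*_{\max}(G_n))$; this is where maximal, rather than reduced, completions are essential. Thus $x$ lifts to some $x_n$ whose image dies in some $C^*_{\max}(G_{n'})$. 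Injectivity at stage $n'$ then forces the image of $x_n$ in $K_*^{G_{n'}}(\underline EG_{n'})$ to vanish, hence $x=0$.

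The delicate point is the topological colimit statement. It requires that every finite subgroup of $G$ lifts to $G_n$ for $n$ large and that the proper classifying spaces are compatible. We would verify this using local finiteness of the kernels together with the torsion description of graphical small-cancellation groups: finite subgroups of $\Gamma_n$ stabilize bounded sets, so they lift. Finally, rational injectivity of $\nu^G$ follows because $\iota_G$ is rationally injective.
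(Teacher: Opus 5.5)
Your finite stages (hyperbolic $\Gamma_n$, $V_n=E^{(\Gamma_n)}/W_n$, finite asymptotic dimension via the extension and free-product theorems) agree with the paper. So does the finite-stage injectivity: Skandalis--Tu--Yu for the reduced map, then the factorization of reduced assembly through maximal assembly; the $\gamma$-element discussion is unnecessary. The analytic half of the limit argument also agrees.

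\textbf{The gap is the topological half.} Your diagram chase needs maps $K_*^{G_n}(\underline EG_n)\to K_*^{G_{n'}}(\underline EG_{n'})\to K_*^{G}(\underline EG)$ compatible with maximal assembly, which you package as $K_*^G(\underline EG)\cong\varinjlim_n K_*^{G_n}(\underline EG_n)$. For a non-injective homomorphism $\alpha$, however, the induction structure supplies such maps only on spaces on which $\ker\alpha$ acts freely. The kernel of $G_n\to G$ does not act freely on $\underline EG_n$. It contains involutions of $V_n$ that die in $V$, for instance $[e_{g,j}]-[e_{1,j}]$ with $1\neq g\in\ker(\Gamma_n\to\Gamma)$, and every finite subgroup fixes a point of $\underline EG_n$. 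The colimit theorem of Bartels--Echterhoff--L\"uck for non-injective structure maps does not give your formula; as the paper notes, it needs assembly hypotheses for every subgroup of every approximating group.

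Your proposed verification does not address this either:
\begin{enumerate}
\item The kernels are not locally finite: they contain $\ker(\Gamma_n\to\Gamma)$, an infinite subgroup of a hyperbolic group when $A$ is infinite.
\item Lifting a finite subgroup of $G$ to some $G_n$ is automatic, since its finitely many multiplication-table relations hold at a finite stage.
\end{enumerate}
Neither fact makes the required induction maps exist.

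\textbf{The missing idea} is to work one Rips scale at a time, which is how the paper proves \cref{max:limit-injectivity}.
\begin{enumerate}
\item Write $K_*^G(\underline EG)=\varinjlim_R K_*^G(P_R(G))$ and fix $R$. Only finitely many words of length at most $3R$ occur, so for $n$ large the kernel $N_n$ of $G_n\to G$ meets the $3R$-ball trivially.
\item Then $N_n$ acts freely on $P_R(G_n)$: if $h\in N_n$ fixes a point, then $v^{-1}hv\in N_n$ has length at most $R$ for a vertex $v$ of the supporting simplex. Moreover $N_n\backslash P_R(G_n)\cong P_R(G)$.
\item Induction therefore gives an isomorphism $K_*^{G_n}(P_R(G_n))\cong K_*^G(P_R(G))$, natural for maximal assembly, and this lifts $x$.
\item Since $\ker(G_n\to G_m)\subseteq N_n$ also acts freely on $P_R(G_n)$, the lift can be induced to $G_m$. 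The induced space is proper and maps to $\underline EG_m$, and continuity of $K_*(C^*_{\max})$ makes its assembly vanish for $m$ large.
\item Injectivity at stage $m$ and functoriality of induction then give $x=0$.
\end{enumerate}
This needs neither a colimit formula for $\underline EG_n$ nor any information about torsion in graphical small-cancellation groups.
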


The Novikov conjecture plays a central role in the classification of
higher dimensional manifolds~\cite{Weinberger} and has been studied
through $KK$-theory~\cite{HK,Kas}, cyclic cohomology~\cite{Connes86,CGM,CM}, and
quantitative $K$-theory and related geometric
methods~\cite{Yu98,Yu00,STY,GHW,GWY,GWXY}.
For the proof below, we use the consequence of
\cite[Theorem~6.1]{STY} that groups of finite asymptotic dimension have
integrally injective reduced assembly maps for proper actions.

In this subsection, we prove that each $G_r$ and each $\widetilde G_r$
is a direct limit of finitely generated groups of finite asymptotic
dimension, with surjective connecting homomorphisms.

Recall that a metric space has asymptotic dimension at most $q$ if, for
every $R>0$, it has a uniformly bounded cover that is the union of
$q+1$ families, with distinct members of each family at distance greater
than $R$.  For a finitely generated group, we use a word metric; the
asymptotic dimension does not depend on the finite generating set.

\begin{proposition}\label{max:stages}
For every $r\in(0,1)$ there are sequences of finitely generated groups
and surjective homomorphisms
\[
 G_{r,1}\longrightarrow G_{r,2}\longrightarrow\cdots,
 \qquad
 \widetilde G_{r,1}\longrightarrow\widetilde G_{r,2}
       \longrightarrow\cdots
\]
such that
\[
 G_r\cong\varinjlim_n G_{r,n},
 \qquad
 \widetilde G_r\cong\varinjlim_n\widetilde G_{r,n},
\]
and every group in either sequence has finite asymptotic dimension.
\end{proposition}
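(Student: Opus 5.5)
Write $A=A(r)$ and $A=\{k_1<k_2<\cdots\}$ (possibly finite or empty). The plan is to truncate both the relators of $\Gamma_A$ and the relations $W$ defining $V_A$. For $n\ge1$ put $A_n=A\cap\{K,\dots,k_n\}$ and let $\Gamma_n=\Gamma_{A_n}$, the graphical small-cancellation group defined by the finitely many graphs $\Theta_k$, $k\in A_n$. The identity on $S$ induces surjections $\Gamma_n\to\Gamma_{n+1}\to\Gamma_A$, and since every relator of $\Gamma_A$ involves only finitely many graphs, $\varinjlim\Gamma_n=\Gamma_A$. For the module, let $W_n\subseteq(\Ftwo[\Gamma_n])^m$ be the submodule generated by $\mathcal C_P^\perp$ for copies $P$ of $\Theta_k$ in $\Cay(\Gamma_n,S)$, $k\in A_n$, and set $V_n=(\Ftwo[\Gamma_n])^m/W_n$. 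By \Cref{mod:functions}, $V_A=(\Ftwo[\Gamma_A])^m/W$, and $W$ is generated as a module by the images of the $\mathcal C^\perp$'s of the based copies. Hence the natural maps $V_n\rtimes\Gamma_n\to V_{n+1}\rtimes\Gamma_{n+1}$ are surjective and have colimit $V_A\rtimes\Gamma_A=H_A$. Put $G_{r,n}=C_N\times(V_n\rtimes\Gamma_n)$ and $\widetilde G_{r,n}=G_{r,n}*\Z$. Colimits commute with direct products by a fixed finite group and with free products with $\Z$, so the stated isomorphisms follow. Each stage is generated by the images of the $|S|+m(+1)$ generators, so it is finitely generated.

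It remains to show finite asymptotic dimension. Since $\Gamma_n$ is given by a finite graphical $C'(1/24)$ presentation (\Cref{data:family}, last assertion, applied to the finite subfamily $A_n$), it is hyperbolic, so $\operatorname{asdim}\Gamma_n<\infty$. Products with finite groups do not change asymptotic dimension, and free products satisfy $\operatorname{asdim}(G*\Z)\le\max\{\operatorname{asdim}G,1\}$ (Bell--Dranishnikov). The key case is therefore $V_n\rtimes\Gamma_n$, which is an extension with locally finite abelian kernel. For this I would use the extension theorem in the form suited to locally finite kernels: $\operatorname{asdim}(V\rtimes\Gamma)\le\operatorname{asdim}\Gamma+\operatorname{asdim}V$, where $V$ carries the metric restricted from the group. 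Alternatively, I would apply Bell--Dranishnikov's Hurewicz-type theorem to the quotient map to $\Gamma_n$, which requires uniform control of the asymptotic dimension of preimages of balls.

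The main obstacle is exactly this control. Preimages of $R$-balls in $\Gamma_n$ are cosets of $V_n$ times finitely many elements. The subgroup generated by the elements of $V_n$ of word length $\le R$ is generated by finitely many $v_{h,j}$ with $|h|\le R$. In a locally finite group this subgroup is finite, giving asymptotic dimension $0$ for each scale, and this is uniform over the fibers by translation invariance. The concern is that $V_n$ might fail to be locally finite under the truncated relations. It is locally finite, since it is an $\Ftwo$-vector space and finitely generated subgroups are finite. So every family $\{g V_n\cap B(x,R')\}$ decomposes into $R$-separated pieces that are uniformly bounded: the subgroup $\langle V_n\cap B(1,R)\rangle$ is finite, and the cosets of it serve as the pieces. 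This shows the fibers have asymptotic dimension $0$ uniformly, and the Hurewicz theorem yields $\operatorname{asdim}(V_n\rtimes\Gamma_n)\le\operatorname{asdim}\Gamma_n<\infty$.

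I expect the real subtlety to be the bookkeeping that the truncated module $V_n$ really maps onto $V_{n+1}$ compatibly with $\Gamma_n\to\Gamma_{n+1}$. Copies in $\Cay(\Gamma_n,S)$ must map onto copies in $\Cay(\Gamma_{n+1},S)$, and they do, because based embeddings are label-defined. Only finite generation of each stage is needed, not any good property of the colimit.
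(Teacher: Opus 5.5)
Your proposal is correct and follows essentially the paper's route. You truncate $A$ to finite subsets and pass to the finite-stage quotient presentations $(\Ftwo[\Gamma_n])^m/W_n$, whose relations map to relations and so give compatible surjections with colimit $H_A$. You then get finite asymptotic dimension from hyperbolicity of $\Gamma_n$ together with the Bell--Dranishnikov extension theorem, using that the locally finite kernel $V_n$ has asymptotic dimension zero, and finish with the product by $C_N$ and the free-product estimate.

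The only difference is how hyperbolicity of $\Gamma_n$ is obtained. You quote the standard fact that finite graphical $C'(1/6)$ presentations give hyperbolic groups. The paper instead derives it from its own coned-off hyperbolicity (\Cref{geo:hyperbolicity}) and a quasi-isometry between $\widehat X_{A,n}$ and $X_{A,n}$, which holds because the finitely many relator graphs have bounded diameter.
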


\begin{proof}
Fix a nonempty set $A\subseteq\{K,K+1,\ldots\}$ and choose an increasing
exhaustion by nonempty finite sets $A^{(n)}$.  Let
\[
 \Gamma_{A,n}=\langle S\mid\text{labels of closed paths in }
                    \Theta_i,\ i\in A^{(n)}\rangle.
\]
Adding relators gives surjections between consecutive groups, and
their direct limit is the graphical group $\Gamma_A$ used to construct
$H_A$.

Put $X_{A,n}=\Cay(\Gamma_{A,n},S)$.
By \Cref{data:family}, each finite subfamily satisfies the hypotheses of
\Cref{geo:hyperbolicity}.  Its coned-off Cayley graph
$\widehat X_{A,n}$ is therefore hyperbolic.  Put
$D_n=\max_{i\in A^{(n)}}\diam\Theta_i$.
Every two-edge passage through a cone vertex can be replaced by a path
in its copy of length at most $D_n$.  Thus, for group vertices $x,y$,
\[
 d_{\widehat X_{A,n}}(x,y)\le d_{X_{A,n}}(x,y)
 \le\max\{1,D_n\}\,d_{\widehat X_{A,n}}(x,y).
\]
Every cone vertex is at distance one from the Cayley graph, so the two
graphs are quasi-isometric.  It follows that $\Gamma_{A,n}$ is a
hyperbolic group and has finite asymptotic dimension \cite{Roe}.

We also form the module at each stage.  Use the fixed base vertices in
the graphs, and the same path words at every stage, to define the based
maps $\varphi_{i,n}:\Theta_i\to\Gamma_{A,n}$ for $i\in A^{(n)}$.
For $h\in\Gamma_{A,n}$ and $1\le j\le m$, write $e_{h,j}$ for the
standard basis vector of $(\Ftwo[\Gamma_{A,n}])^m$ in coordinate $j$
at $h$.  For $g\in\Gamma_{A,n}$ and
$a=(a_v)\in\mathcal C_i^\perp$, with
$a_v=(a_{v,1},\ldots,a_{v,m})\in E=\Ftwo^m$, put
\[
 z_{i,g,a}^{(n)}=
       \sum_{v\in\Vertices\Theta_i}\sum_{j=1}^m
                 a_{v,j}e_{g\varphi_{i,n}(v),j}.
\]
Let $W_{A,n}\subseteq(\Ftwo[\Gamma_{A,n}])^m$ be the $\Ftwo$-linear
span of these vectors, and define
\[
 V_{A,n}=(\Ftwo[\Gamma_{A,n}])^m/W_{A,n},
 \qquad H_{A,n}=V_{A,n}\rtimes\Gamma_{A,n}.
\]
These are the quotient presentations
\eqref{mod:relations} and \eqref{mod:presentation} for the finite subfamily.
The quotient homomorphisms between the graphical groups carry each
displayed module relation to the corresponding relation at the next
stage.  They therefore induce compatible surjections on the modules
and the semidirect products.

These maps give
\begin{equation}\label{max:H-limit}
 H_A\cong\varinjlim_n H_{A,n}.
\end{equation}
Indeed, every element of $W_A$ is a finite sum of translated relations,
so only finitely many graph indices are involved.  These indices occur
at some common stage, and any equality between the finitely many group
words involved also holds at a sufficiently large stage.  This proves
$V_A\cong\varinjlim_n V_{A,n}$; the compatible actions then give
\eqref{max:H-limit}.

The group $H_{A,n}$ is generated by the images of $S$ and the $m$
vectors $[e_{1,j}]$, since conjugating $[e_{1,j}]$ by
$g\in\Gamma_{A,n}$ gives $[e_{g,j}]$.  Its normal subgroup $V_{A,n}$
is an elementary abelian $2$-group and is locally finite.  In the metric
inherited from a word metric on
$H_{A,n}$ it has asymptotic dimension zero: for every $R$, the finite
set $B_R(1)\cap V_{A,n}$ generates a finite subgroup, whose left cosets
are uniformly bounded and mutually $R$-separated.  The extension
theorem for asymptotic dimension \cite[Theorem~7]{BD} now gives
\[
 \operatorname{asdim}H_{A,n}
 \le\operatorname{asdim}V_{A,n}
       +\operatorname{asdim}\Gamma_{A,n}
 =\operatorname{asdim}\Gamma_{A,n}<\infty.
\]

For $A=\varnothing$, there are no module relations and
$H_\varnothing=(\Ftwo[F(S)])^m\rtimes F(S)$.
The same extension argument gives finite asymptotic dimension, since
$F(S)$ is free.  Use the constant sequence $H_{\varnothing,n}=H_\varnothing$.

For the set $A=A(r)$ in \eqref{prescribed:digits}, put
\[
 G_{r,n}=C_N\times H_{A,n},
 \qquad \widetilde G_{r,n}=G_{r,n}*\Z,
 \qquad N=2^K.
\]
A direct product with a fixed finite group and a free product with a
fixed group commute with these direct limits.  The induced connecting
homomorphisms are surjective, so the two limits are $G_r$ and
$\widetilde G_r$, respectively.  Since the factor $C_N$ is finite,
$G_{r,n}$ is quasi-isometric to $H_{A,n}$ and has finite asymptotic
dimension.  The free-product estimate \cite[Proposition~10]{BD} gives
\[
 \operatorname{asdim}\widetilde G_{r,n}
 \le\max\{1,\operatorname{asdim}G_{r,n}\}<\infty.
\]
This proves the proposition, including $A=\varnothing$.
\end{proof}

For irrational $r$, neither $G_r$ nor $\widetilde G_r$ coarsely embeds
into Hilbert space.  Indeed, the number $q$ in
\eqref{prescribed:parameters} is irrational: if $q$ were rational,
then $s=(a+(N-a)q)/N$ and hence $r$ would be rational.  Thus $A(r)$ is
infinite, and $\Gamma_A$ contains isometric copies of expanders of
unbounded size with uniform degree and spectral-gap bounds.  For the
natural generating sets, the inclusion $\Gamma_A\hookrightarrow H_A$
is isometric because projection onto $\Gamma_A$ is a $1$-Lipschitz
retraction.  The inclusions $H_A\hookrightarrow C_N\times H_A=G_r$
and $G_r\hookrightarrow G_r*\Z=\widetilde G_r$ are also isometric.
A coarse embedding of either group would therefore give uniformly
controlled coarse embeddings of these expanders, contradicting the
expander Poincar\'e inequality; see \cite{Osajda,WYbook}.
Consequently, both groups have infinite asymptotic dimension, since
finite asymptotic dimension implies coarse embeddability into Hilbert
space~\cite{Yu00}.

\subsection{Injectivity under direct limits}

Every group homomorphism induces a homomorphism of maximal group
$C^*$-algebras, so the surjective connecting maps in our construction
induce maps on their $K$-theory.  At a fixed word-metric scale, a
sufficiently late group in a surjective sequence has the same relations
as the limiting group at that scale.  This observation lets us lift a
class in the proper assembly domain to a finite stage.  Continuity of
maximal group $C^*$-algebras then lets us test whether its index vanishes
at a later stage.

Bartels--Echterhoff--L\"uck prove inheritance of assembly isomorphisms
under arbitrary directed colimits, assuming bijectivity for every
subgroup of every approximating group
\cite[Theorem~3.5(ii)]{BEL}.  For the convenience of readers, we state
the following proposition, assuming only injectivity for each group
in the limit sequence.  We give the lifting argument at a fixed scale $R$
and use their induction and continuity results.

\begin{proposition}\label{max:limit-injectivity}
Let $L_1\to L_2\to\cdots$ be a sequence of finitely generated groups
with surjective connecting homomorphisms, and let
$L=\varinjlim_n L_n$.  If $\mu_*^{L_n,\max}$ is injective for every
$n$, then $\mu_*^{L,\max}$ is injective.
\end{proposition}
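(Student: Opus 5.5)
Let $q_n\colon L_n\to L$ denote the canonical surjections and $q_{n,n'}\colon L_n\to L_{n'}$ the connecting maps. Fix $x\in K_*^L(\underline EL)$ with $\mu_*^{L,\max}(x)=0$; we want $x=0$. The plan has three steps: lift $x$ to a finite stage, push its index to a later stage where it vanishes, and conclude by injectivity there.

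\emph{Step 1 (lifting at a fixed scale).} Since the domain is a direct limit over $L$-compact subspaces, $x$ is supported on such a subspace. We may take this subspace to be the Rips complex $P_R(L)$ for some $R$, with respect to a word metric for a generating set pushed forward from $L_1$; this complex is a cocompact model piece of $\underline EL$ because finite-diameter simplices have finite stabilizers after passing to the barycentric subdivision. Because $L$ is the limit of surjections with finitely generated $L_1$, the ball of radius $2R$ in $L$ (with finitely many relations holding in it) is already isomorphic, for $n$ large, to the ball of radius $2R$ in $L_n$. Hence the quotient $L_n\backslash P_R(L_n)\to L\backslash P_R(L)$ is an isomorphism of finite complexes, stabilizers correspond isomorphically, and $P_R(L)=L\times_{L_n}P_R(L_n)$ up to passing to the quotient by $\ker q_n$.

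Rather than identify equivariant $K$-homology groups by hand, I would use the induction structure from \cite{BEL}. Their results give the identification $K_*^{L_n}(P_R(L_n))\cong K_*^L(P_R(L))$, induced by $q_n$, whenever this local isomorphism holds: the relevant stabilizers are finite, and $\ker q_n$ acts freely on $P_R(L_n)$ at that scale. This produces $x_n\in K_*^{L_n}(\underline EL_n)$ mapping to $x$. The same identification at every later stage $n'\ge n$ shows that the images $x_{n'}$ of $x_n$ are mutually compatible. Moreover, it shows that $x_{n'}=0$ if and only if $x=0$, once $n'$ is large enough that the $R$-scale picture has stabilized (all stages are then equal at that scale).

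\emph{Step 2 (continuity).} By naturality of maximal assembly under group homomorphisms, $(q_{n'})_*\mu^{L_{n'},\max}(x_{n'})=\mu^{L,\max}(x)=0$. The continuity of maximal group $C^*$-algebras under directed limits with surjective maps, $C^*_{\max}(L)=\varinjlim C^*_{\max}(L_n)$, together with continuity of $K$-theory (\cite{BEL}), implies that some later stage satisfies $\mu^{L_{n''},\max}(x_{n''})=0$.

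\emph{Step 3 (conclusion).} Injectivity of $\mu^{L_{n''},\max}$ gives $x_{n''}=0$, and by the stabilization remark in Step 1 this gives $x=0$.

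The main obstacle is Step 1: making rigorous that at scale $R$ the equivariant $K$-homology of $P_R$ is unchanged from $L_n$ to $L$. In particular, one must show that the class $x_{n''}$ determines $x$, and not just map it forward. I would handle this through the \cite{BEL} induction isomorphism for the quotient map restricted to a complex on which the kernel acts freely, since the map $P_R(L_n)/\ker q_n\cong P_R(L)$ holds for large $n$.
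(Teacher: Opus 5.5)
Your proposal follows the paper's proof. The ingredients are the same: Rips complexes $P_R$; the fact that for large $n$ the kernel $N_n=\ker q_n$ acts freely on $P_R(L_n)$ with $N_n\backslash P_R(L_n)\cong P_R(L)$; the induction isomorphism of \cite{BEL} at that fixed scale; continuity of $K_*(C^*_{\max})$ to kill the index at a later stage $n''$; and injectivity at that stage. The paper uses the condition $N_n\cap B_{L_n}(1,3R)=\{1\}$; your requirement that $q_n$ be injective on the $2R$-ball is stronger and also suffices.

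The one step to repair is the final implication $x_{n''}=0\Rightarrow x=0$. You justify it by saying that all stages agree at scale $R$, but this does not prove it. The equation $x_{n''}=0$ in $K_*^{L_{n''}}(\underline EL_{n''})=\varinjlim_{R'}K_*^{L_{n''}}(P_{R'}(L_{n''}))$ only means that the scale-$R$ class dies at some larger scale $R'$. Since $n''$ is fixed by the analytic argument before $R'$ is known, nothing guarantees that $\ker q_{n''}$ acts freely on $P_{R'}(L_{n''})$, or that $L_{n''}$ and $L$ agree at scale $R'$.

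You also do not need $x_{n''}$ to ``determine'' $x$; mapping it forward is exactly enough. As in the paper's last step, use the induction homomorphism along $q_{n''}$, namely $K_*^{L_{n''}}(\underline EL_{n''})\to K_*^L(\underline EL)$. It sends $x_{n''}$ to $x$ by functoriality of induction and the construction of your lift, and being a homomorphism it sends $0$ to $0$. This is induction for proper actions along a map whose kernel need not act freely on $\underline EL_{n''}$, since the kernels can contain torsion. So it is a different input from the free-kernel isomorphism you use at scale $R$.
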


\begin{proof}
Choose a finite symmetric generating set of $L_1$ and give every group
the word metric associated to its image.  Write $q_n:L_n\to L$ for
the quotient homomorphism.
For a finitely generated group $J$ and an integer $R\ge1$, let
$P_R(J)$ be the Rips complex: its vertices are the elements of $J$,
and its simplices are the finite subsets of diameter at most $R$.
Its translation action is proper and cocompact; after barycentric
subdivision it is a proper $J$-CW-complex.
The union over $R$ is the space of finitely supported probability
measures on $J$, with its simplicial topology.  The fixed-point space
of a finite subgroup is nonempty and contractible, by convexity, and an
infinite subgroup has no fixed point.  This gives a model for
$\underline EJ$.  Every finite collection of simplex orbits lies in
some $P_R(J)$, so the usual compact-support convention gives
\begin{equation}\label{max:Rips-domain}
 K_*^J(\underline EJ)\cong\varinjlim_R K_*^J(P_R(J)).
\end{equation}

Fix $R$.  There are only finitely many words of length at most $3R$
in the common generating alphabet.  Each such word that represents
the identity in $L$ represents the identity at some finite stage.
Consequently, for all sufficiently large $n$,
\begin{equation}\label{max:short-kernel}
 N_n\cap B_{L_n}(1,3R)=\{1\},
 \qquad N_n:=\ker q_n.
\end{equation}
We claim that $q_n$ induces an isomorphism of $L$-complexes
\begin{equation}\label{max:Rips-quotient}
 N_n\backslash P_R(L_n)\cong P_R(L).
\end{equation}

To lift a simplex $\{g_0,\ldots,g_t\}$ of $P_R(L)$, choose words
$w_i$ of length at most $R$ representing $g_0^{-1}g_i$, with $w_0=1$,
and choose words $u_{ij}$ of length at most $R$ representing
$g_i^{-1}g_j$.  The relations
\[
 w_i^{-1}w_j u_{ij}^{-1}=1
\]
have length at most $3R$ and therefore hold in $L_n$ by
\eqref{max:short-kernel}.  If $\widetilde g_0$ is any lift of $g_0$,
the vertices $\widetilde g_0w_i$ thus form a simplex of $P_R(L_n)$.
Two such lifts differ by a common element of $N_n$: after matching
their first vertices by a left translation in $N_n$, corresponding
vertices differ by a kernel element of length at most $2R$ and hence
coincide.  Also, $q_n$ identifies no two vertices of a simplex.
These facts prove \eqref{max:Rips-quotient}.

The action of $N_n$ on $P_R(L_n)$ is free, including on the interiors
of simplices.  Indeed, if $h\in N_n$ fixes a point, it permutes the
vertices of its supporting simplex.  For any such vertex $v$,
\[
 v^{-1}hv\in N_n,\qquad |v^{-1}hv|=d(v,hv)\le R,
\]
so \eqref{max:short-kernel} implies $h=1$.
Induction along $q_n$, together with \eqref{max:Rips-quotient},
therefore gives an isomorphism
\begin{equation}\label{max:induction}
 K_*^{L_n}(P_R(L_n))\xrightarrow{\ \cong\ }
 K_*^L(P_R(L)).
\end{equation}
Here we use the induction isomorphism for a group homomorphism whose
kernel acts freely on the given space; see
\cite[Lemma~1.5 and Theorem~5.1]{BEL}.  The induced $L$-space is
$L\times_{L_n}P_R(L_n)=N_n\backslash P_R(L_n)$.

Let $x\in K_*^L(\underline EL)$ satisfy
$\mu_*^{L,\max}(x)=0$.  By \eqref{max:Rips-domain}, choose a
representative $x_R\in K_*^L(P_R(L))$.  By
\eqref{max:induction}, it has a lift
$z_n\in K_*^{L_n}(P_R(L_n))$.  Denote the image of $z_n$ in
$K_*^{L_n}(\underline EL_n)$ by $y_n$.
Maximal group $C^*$-algebras commute with the group direct limit by
their universal property.  Continuity of operator $K$-theory gives
\begin{equation}\label{max:analytic-limit}
 K_*(C^*_{\max}(L))\cong
       \varinjlim_m K_*(C^*_{\max}(L_m));
\end{equation}
see also \cite[Lemma~5.2]{BEL}.
The class $\mu_*^{L_n,\max}(y_n)$ has zero image in this direct limit,
so it becomes zero at some stage $m\ge n$.

Let $\alpha_{n,m}:L_n\to L_m$ be the connecting homomorphism.
Since $\ker\alpha_{n,m}\subseteq N_n$, its kernel acts freely on
$P_R(L_n)$.  The same induction isomorphism therefore applies to
$z_n$, giving a class on the induced $L_m$-space
$L_m\times_{L_n}P_R(L_n)$.  This space is proper because its
stabilizers are images of finite stabilizers.  Map it to
$\underline EL_m$ and denote the resulting class by $y_m$.
Naturality of maximal assembly under group homomorphisms
\cite[Theorem~5.1]{BEL} gives
\[
 \mu_*^{L_m,\max}(y_m)=0.
\]
Injectivity at stage $m$ implies $y_m=0$.  By functoriality of
induction, its image in $K_*^L(\underline EL)$ is $x$, and hence $x=0$.
\end{proof}

\begin{proof}[Proof of \Cref{max:main}]
Fix either $G=G_r$ or $G=\widetilde G_r$, and let $(J_n)$ be its
sequence of approximating groups from \Cref{max:stages}.  Each $J_n$
has finite asymptotic dimension and therefore admits a coarse embedding
into Hilbert space \cite{Yu00}.  The work of Yu~\cite{Yu98,Yu00} and
Skandalis--Tu--Yu~\cite[Theorem~6.1]{STY} gives integral injectivity of
the reduced assembly map
\[
 \mu_*^{J_n}:K_*^{J_n}(\underline EJ_n)
          \longrightarrow K_*(\Cr(J_n)).
\]
The theorem of Skandalis--Tu--Yu applies to this proper assembly
domain, without a finiteness assumption on the classifying space.
Since reduced assembly factors as maximal assembly followed by the
map induced by $C^*_{\max}(J_n)\to\Cr(J_n)$, maximal assembly is
injective at every stage.  \Cref{max:limit-injectivity} now proves
integral injectivity of $\mu_*^{G,\max}$.

Finally, the comparison map $\iota_G$ is rationally injective for
every discrete group $G$.  The identity contribution to the equivariant
Chern character is the rational homology of $G\backslash\underline EG$,
with the appropriate even or odd degrees \cite{LuckChern}.  To identify
this homology with ordinary group homology, consider
\[
 EG\times_G\underline EG\longrightarrow G\backslash\underline EG.
\]
Over a cell with stabilizer $F$, the corresponding fiber is $BF$.
Since $F$ is finite, its positive-degree rational homology vanishes;
the cellular spectral sequence therefore shows that this map is a
rational homology equivalence.  Its source is homotopy equivalent to
$BG$, because $\underline EG$ is contractible.  Naturality of the
Chern character now identifies the comparison map on the identity
contribution with the ordinary homological Chern character.
Consequently $\iota_G\otimes\Q$ is injective.  Composing it with
$\mu_*^{G,\max}\otimes\Q$ proves the second assertion.
\end{proof}

\section*{Acknowledgement}

We acknowledge that the geometric and the spectral techniques in this article
are developed from arguments in the OpenAI preprint \cite{OpenAI2026}.
ChatGPT and Claude were used in language editing and grammar refinement.
All proofs are solely developed by the author.

\end{document}